\theoremstyle{plain} \newtheorem{thm}{Theorem}[section]
\newtheorem{cor}[thm]{Corollary} \newtheorem{prop}[thm]{Proposition}
\newtheorem{lemma}[thm]{Lemma}
\newtheorem*{namedtheorem}{\theoremname}
\newcommand{\theoremname}{testing}
\theoremstyle{definition} \newtheorem{defn}[thm]{Definition}
\theoremstyle{remark} \newtheorem{remark}[thm]{Remark}
\newcommand{\h}{\mathbb{H}}
\newcommand{\R}{\mathbb{R}}
\newcommand{\s}{\mathbb{S}}
\newcommand{\Z}{\mathbb{Z}}
\begin{document}

\title{Hidden Symmetries and Commensurability of 2-Bridge Link Complements}

\author{Christian Millichap \& William Worden} 
\address{Department of Mathematics, Linfield College\\ McMinnville, OR 97128}
\email{Cmillich@linfield.edu}
\address{Department of Mathematics, Temple University\\ Philadelphia, PA 19122}
\email{william.worden@temple.edu}

\begin{abstract}
In this paper, we show that any non-arithmetic hyperbolic $2$-bridge link complement admits no hidden symmetries.  As a corollary, we conclude that a hyperbolic $2$-bridge link complement cannot irregularly cover a hyperbolic $3$-manifold. By combining this corollary with the work of Boileau and Weidmann, we obtain a characterization of $3$-manifolds with non-trivial JSJ-decomposition and rank two fundamental groups. We also show that the only commensurable hyperbolic $2$-bridge link complements are the figure-eight knot complement and the $6_{2}^{2}$ link complement. Our work requires a careful analysis of the tilings of $\mathbb{R}^{2}$ that come from lifting the canonical triangulations of the cusps of hyperbolic $2$-bridge link complements. 
\end{abstract}

\maketitle
\section{Introduction}
\label{sec:intro}
Two manifolds are called \textit{commensurable} if they share a common finite sheeted cover.  Here, we focus on hyperbolic $3$-manifolds, that is, $M = \mathbb{H}^{3} / \Gamma$ where $\Gamma$ is a discrete, torsion-free subgroup of $\text{Isom}(\mathbb{H}^{3})$.  We are interested in analyzing the set of all manifolds commensurable with $M$.  Commensurability is a property of interest because it provides a method for organizing manifolds and many topological properties are preserved within a commensurability class. For instance,  Schwartz \cite{Sch} showed that two cusped hyperbolic $3$-manifolds are commensurable if and only if their fundamental groups are quasi-isometric. In this paper, we restrict our attention to hyperbolic $2$-bridge link complements; see Section \ref{sec:background} for the definition of a $2$-bridge link. We use the word \textit{link} to refer to a link in $\mathbb{S}^{3}$ with at least one component. We use the word \textit{knot} to only mean a single component link. 

A significant challenge in understanding the commensurability class of a hyperbolic $3$-manifold $M = \mathbb{H}^{3} / \Gamma$ is determining whether or not $M$ has any \textit{hidden symmetries}. To understand hidden symmetries, we first need to introduce some terminology. The \textit{commensurator} of $\Gamma$ is 
\begin{center}
	$C(\Gamma) = \left\{ g \in \text{Isom}(\mathbb{H}^{3}) : \left|\Gamma : \Gamma \cap g\Gamma g^{-1} \right| < \infty\right\rbrace $.	
\end{center}
It is a well known fact that two hyperbolic $3$-manifolds are commensurable if and only if their corresponding commensurators are conjugate in $\text{Isom}(\mathbb{H}^{3})$; see \cite[Lemma 2.3]{Wa}. We denote by $C^{+}(\Gamma)$ the restriction of $C(\Gamma)$ to orientation-preserving isometries. We also denote by $N(\Gamma)$ the normalizer of $\Gamma$ in $\text{Isom}(\mathbb{H}^{3})$ and $N^{+}(\Gamma)$ the restriction of $N(\Gamma)$ to orientation-preserving isometries. Note that, $\Gamma \subset N(\Gamma) \subset C(\Gamma)$. A symmetry of $M$ corresponds to an element of $N(\Gamma) / \Gamma$, and a hidden symmetry of $M$ corresponds to an element of $C(\Gamma)$ that is not in $N(\Gamma)$. Geometrically, $M$ admits a hidden symmetry if there exists a symmetry of a finite cover of $M$ that is a not a lift of an isometry of $M$. See Sections 2 and 3 of \cite{Wa} for more details on commensurators and hidden symmetries.      

In this paper, we give a classification of the hidden symmetries of hyperbolic $2$-bridge link complements. In \cite{ReWa} Reid--Walsh used algebraic methods to determine that hyperbolic $2$-bridge knot complements (other than the figure-eight knot complement) have no hidden symmetries. However, their techniques do not apply to hyperbolic $2$-bridge links with two components. Here, we use a geometric and combinatorial approach to prove the following theorem. 

\begin{thm}
	\label{thm:main1}
	If $M = \mathbb{S}^{3} \setminus K$ is a non-arithmetic hyperbolic $2$-bridge link complement, then $M$ admits no hidden symmetries (both orientation-preserving and orientation-reversing). 
\end{thm}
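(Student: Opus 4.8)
The plan is to reduce the existence of hidden symmetries to a question about the Euclidean geometry of a single cusp, and then to settle that question by a direct combinatorial study of the canonical cusp tiling. Since $M=\h^{3}/\Gamma$ is non-arithmetic, Margulis's theorem guarantees that $C(\Gamma)$ is discrete, so that $O_{\min}=\h^{3}/C(\Gamma)$ is itself a finite-volume hyperbolic orbifold and $M\to O_{\min}$ is a finite covering. By definition $M$ admits a hidden symmetry exactly when $C(\Gamma)\neq N(\Gamma)$, and because $\Gamma\subset N(\Gamma)\subset C(\Gamma)$ this is equivalent to $O_{\min}$ being a proper quotient of $M/\mathrm{Isom}(M)=\h^{3}/N(\Gamma)$. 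First I would record this reformulation and then push the whole question out to the cusps: any $g\in C(\Gamma)\setminus N(\Gamma)$ can be conjugated so as to fix a chosen parabolic fixed point, whereupon it restricts to a similarity of the corresponding cusp cross-section that is not induced by an element of $\mathrm{Isom}(M)$.

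The second step is to make the cusp cross-section into an invariant combinatorial object on which $C(\Gamma)$ acts. The key point is that the Epstein--Penner canonical decomposition, taken with respect to the maximal equal-volume horoball packing, is a commensurability invariant: the commensurator permutes the maximal cusps and preserves the packing of horoballs in $\h^{3}$, hence it preserves the induced canonical triangulation. Restricting to one cusp and passing to the universal cover $\R^{2}$ of the cusp torus, the canonical triangulation lifts to a tiling $\mathcal{T}$ of $\R^{2}$ by Euclidean triangles, and every element of $C(\Gamma)$ fixing the corresponding parabolic point descends to a Euclidean similarity of $\R^{2}$ preserving $\mathcal{T}$. In this way a hidden symmetry is converted into a symmetry of $\mathcal{T}$ that does not lie in the group generated by the translation lattice together with the geometric symmetries of $M$. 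Thus it suffices to compute the full similarity-symmetry group of $\mathcal{T}$.

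The third and main step is the explicit combinatorial analysis of $\mathcal{T}$. The canonical triangulation of a hyperbolic $2$-bridge link complement is the Sakuma--Weeks (layered) triangulation, encoded by the continued-fraction expansion of the slope $p/q$ of the link; lifting it produces a tiling of the plane whose local structure is completely determined by this combinatorial data. I would describe $\mathcal{T}$ precisely, identify its translation lattice with the cusp lattice, and then enumerate all Euclidean similarities preserving it. The goal is to show that every symmetry of $\mathcal{T}$ is realized by an honest isometry of $M$, so that no element of $C(\Gamma)\setminus N(\Gamma)$ can arise; the orientation-reversing case is handled by allowing reflections among the candidate similarities and checking the same conclusion. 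The non-arithmetic hypothesis enters exactly here: a hidden symmetry would force $\mathcal{T}$ to admit an extra rotational or reflective similarity, which (as in the Neumann--Reid analysis of rigid Euclidean cusp quotients) forces the cusp shape to be highly symmetric and the tiling to be of the special square or hexagonal type that occurs only for the arithmetic $2$-bridge links.

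The step I expect to be the main obstacle is this last classification of the symmetries of $\mathcal{T}$: one must show directly from the continued-fraction combinatorics that the tiling has no accidental similarities beyond the geometric ones, uniformly over all non-arithmetic slopes and for two-component as well as one-component links. This is precisely the case that the algebraic methods of Reid--Walsh could not reach, and it is where the careful analysis of the planar tilings promised in the abstract must do the real work.
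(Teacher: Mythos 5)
Your reduction matches the paper's setup: for non-arithmetic $M$ Margulis gives discreteness of $C(\Gamma)$, the equal-volume canonical (Sakuma--Weeks/Gu\'eritaud) triangulation is invariant under $C(\Gamma)$ (the paper cites Goodman--Heard--Hodgson, \cite[Theorem 2.6]{GoHeHo}, for exactly this), and after composing a putative hidden symmetry with an element of $N(\Gamma)$ (composition, not conjugation --- and note this uses the symmetry exchanging the two cusps when $K$ has two components) one may assume it fixes $\infty$ and hence restricts to an automorphism of the lifted cusp tiling. The genuine gap is in your third step, which is the entire content of the paper's Sections 3--5, and the plan you sketch for it would fail. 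If your tiling $\mathcal{T}$ means the \emph{combinatorial} cusp tiling, then the statement ``every symmetry of $\mathcal{T}$ is realized by an isometry of $M$'' is false: by construction $\widetilde{T}$ is obtained by reflecting a triangulated rectangle $D'$ across a horizontal line, so the reflection $r_y$ across that line is always a simplicial automorphism, yet the paper shows (Theorem \ref{SH_implies_iso}) that $r_y$ never preserves edge valences for non-arithmetic $\Omega$, so it is induced by no symmetry and no hidden symmetry of $M$. If instead $\mathcal{T}$ means the metrically correct tiling on the horosphere, you cannot enumerate its similarities without knowing the Euclidean shapes of its triangles, which are only implicitly determined by the hyperbolic structure. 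The missing idea is the paper's central device: label each edge of $\widetilde{T}$ by the valence of the corresponding edge of the three-dimensional tiling $\widetilde{\mathcal{T}}$. Hidden symmetries preserve these labels (they are isometries of $\widetilde{\mathcal{T}}$), and the labels are strong enough to force every label-preserving automorphism to preserve clasping triangles and meridional edges (Lemma \ref{SH_lemma}), then horizontal strips, and hence to lie in an explicit finite list $\rho_1,\rho_2,\rho_3,g$ modulo deck transformations (Theorem \ref{SH_implies_iso}); each member of that list is then identified with a symmetry of $M$ via Proposition \ref{syms}.

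Your proposed mechanism for the classification --- that a hidden symmetry would force the tiling to be of square or hexagonal type, via the Neumann--Reid analysis of rigid Euclidean cusps --- is also the wrong route here. The rigid-cusp criterion is a one-cusped (knot-complement) tool; it is how Reid--Walsh handled $2$-bridge knots, and its failure to extend to two-component links is precisely the problem this theorem exists to solve. The paper never argues this way: non-arithmeticity is used only once, at the outset, to obtain $C(\Gamma)$-invariance of the canonical triangulation; the tiling classification itself is purely combinatorial, uniform over all words $\Omega\notin\{RL, RLR, R^2L^2\}$, makes no reference to special cusp shapes, and indeed applies even to the arithmetic $6_3^2$ link (showing it has no \emph{detectable} hidden symmetries). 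So while your frame is right, the proof's real content --- the edge-valence labeling and the combinatorial rigidity it provides --- is absent, and the substitute you suggest would not close the argument for the two-component case.
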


The only arithmetic hyperbolic $2$-bridge links are the figure-eight knot, the Whitehead link, the $6_{2}^{2}$ link, and the $6_{3}^{2}$ link. Though it will not be needed in what follows, we refer the interested reader to \cite[Definition 8.2.1]{MaRe} for the definition of an arithmetic group $\Gamma\le \mathrm{Isom}(\h^3)$.

We prove Theorem \ref{thm:main1} by using the canonical triangulation $\mathcal{T}$ of a hyperbolic $2$-bridge link complement, $M = \mathbb{H}^{3} / \Gamma = \mathbb{S}^{3} \setminus K$. This triangulation was first described by Sakuma--Weeks in \cite{SaWe}. Gu\'{e}ritaud in his thesis \cite{Gu} proved that this triangulation is geometrically canonical, i.e., topologically dual to the Ford--Voronoi domain for equal volume cusp neighborhoods. In addition, Akiyoshi--Sakuma--Wada--Yamashita in \cite{AkSaWaYa} have announced a proof of this result where they analyze the triangulation $\mathcal{T}$ via cone deformations of $M$ along the unknotting tunnel. Futer also showed that this triangulation is geometric by applying Rivin's volume maximization principle;  see the appendix of \cite{GuFu}. By \cite[Theorem 2.6]{GoHeHo}, if any such $M$ is non-arithmetic, then $C(\Gamma)$ can be identified with the group of symmetries of the tiling of $\mathbb{H}^{3}$ obtained by lifting $\mathcal{T}$, which we call $\widetilde{\mathcal{T}}$. We prove that any non-arithmetic hyperbolic $2$-bridge link complement $M$ does not admit hidden symmetries by showing that any symmetry of $\widetilde{\mathcal{T}}$ actually corresponds to a composition of symmetries of $M$ and deck transformations of $M$. In other words, $C(\Gamma) = N(\Gamma)$. 

Rather than analyze this tiling of $\mathbb{H}^{3}$, we drop down a dimension and instead analyze the (canonical) cusp triangulation $\widetilde{T}$ of $\mathbb{R}^{2}$, induced by $\widetilde{\mathcal{T}}$. By intersecting a cusp cross section of $M$ with its canonical triangulation $\mathcal{T}$, we obtain a canonical triangulation $T$ of the cusp(s).  If $K$ has two components, we still end up with the same canonical triangulation on both components of $T$ since there is always a symmetry exchanging the two components, and we take equal volume cusp neighborhoods. We can lift $T$ to a triangulation $\widetilde{T}$ of $\mathbb{R}^{2}$ (or two copies of $\mathbb{R}^{2}$ if $K$ has two components). We also place edge labels on $\widetilde{T}$ which record edge valences of corresponding edges in the three-dimensional triangulation. This labeling provides us with enough rigid structure in $\widetilde{T}$ to rule out any hidden symmetries. Goodman--Heard--Hodgson use a similar approach to prove that non-arithmetic hyperbolic punctured-torus bundles do not admit hidden symmetries \cite[Theorem 3.1]{GoHeHo}.

If a hyperbolic $3$-manifold $M$ admits no hidden symmetries, then $M$ can not irregularly cover any \textit{hyperbolic $3$-orbifolds}. A hyperbolic $3$-orbifold is any $N = \mathbb{H}^{3} / \Gamma$, where $\Gamma$ is a discrete subgroup of $\text{Isom}(\mathbb{H}^{3})$, possibly with torsion. All of the previous statements about commensurability of hyperbolic $3$-manifolds and the commensurator of $\Gamma$ also hold for hyperbolic $3$-orbifolds. Theorem \ref{thm:main1} quickly gives us the following corollary about coverings of hyperbolic $3$-orbifolds by hyperbolic $2$-bridge link complements. For the arithmetic cases, volume bounds are taken into consideration to rule out irregular covers of manifolds.  

\begin{cor}
\label{cor:main1}
Let $M$ be any hyperbolic $2$-bridge link complement. If $M$ is non-arithmetic, then $M$ does not irregularly cover any hyperbolic $3$-orbifolds (orientable or non-orientable). If $M$ is arithmetic, then $M$ does not irregulary cover any (orientable) hyperbolic $3$-manifolds. 
\end{cor}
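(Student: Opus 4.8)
The plan is to split into the non-arithmetic and arithmetic cases, since only the former is governed directly by Theorem~\ref{thm:main1}.

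For the non-arithmetic case, I would first record the standard dictionary between irregular covers and hidden symmetries. Suppose $M = \mathbb{H}^{3}/\Gamma$ irregularly covers a hyperbolic $3$-orbifold $N = \mathbb{H}^{3}/\Gamma'$; after conjugating we may assume $\Gamma \subset \Gamma'$ with $[\Gamma' : \Gamma] < \infty$, and irregularity means $\Gamma$ is not normal in $\Gamma'$. For any $g \in \Gamma'$ we have $g\Gamma g^{-1} \subset \Gamma'$ with $[\Gamma' : g\Gamma g^{-1}] = [\Gamma' : \Gamma] < \infty$, so $\Gamma \cap g\Gamma g^{-1}$ has finite index in $\Gamma$; hence $\Gamma' \subset C(\Gamma)$. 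Because $\Gamma$ is not normal in $\Gamma'$, there is some $g \in \Gamma'$ with $g\Gamma g^{-1} \neq \Gamma$, i.e.\ $g \in C(\Gamma) \setminus N(\Gamma)$, which is exactly a hidden symmetry of $M$. Since Theorem~\ref{thm:main1} shows that a non-arithmetic $2$-bridge link complement has no hidden symmetries (orientation-preserving or orientation-reversing), so that $C(\Gamma) = N(\Gamma)$, no such $g$ exists and the cover cannot be irregular. This argument places no orientability restriction on $N$, precisely because Theorem~\ref{thm:main1} controls the full isometry group.

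For the arithmetic case the commensurator is dense, hidden symmetries abound, and the above argument fails entirely; instead I would argue by volume, working through the finite list of arithmetic $2$-bridge links, namely the figure-eight knot, the Whitehead link, the $6_{2}^{2}$ link, and the $6_{3}^{2}$ link. The key inputs are that a finite cover of a cusped manifold is cusped, that an irregular cover has degree at least $3$ (any index-$2$ subgroup is normal), and the theorem of Cao--Meyerhoff that the minimal volume orientable cusped hyperbolic $3$-manifold has volume $2V_{3}$, where $V_{3} \approx 1.0149$ is the volume of a regular ideal tetrahedron. If $M$ irregularly covered an orientable hyperbolic $3$-manifold $N$ of degree $d \geq 3$, then $N$ would be orientable and cusped with $\mathrm{vol}(N) = \mathrm{vol}(M)/d \leq \mathrm{vol}(M)/3$, forcing $\mathrm{vol}(M) \geq 6V_{3} \approx 6.09$. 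It then suffices to verify that each of the four arithmetic complements has volume strictly below $6V_{3}$: the figure-eight complement has volume $2V_{3}$ (and being volume-minimal cannot nontrivially cover any orientable hyperbolic manifold at all), the Whitehead complement is a single regular ideal octahedron of volume $\approx 3.66$, and the $6_{2}^{2}$ and $6_{3}^{2}$ complements have volumes approximately $5.33$, all comfortably below $6.09$.

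I expect the main obstacle to be bookkeeping rather than conceptual: one must correctly invoke the sharp minimal-volume result for orientable cusped hyperbolic $3$-manifolds and confirm the exact volumes of the two six-crossing links (for instance via SnapPy), since the entire arithmetic argument hinges on every example lying below the $6V_{3}$ threshold. It is also worth flagging why the arithmetic statement is restricted to orientable manifolds: allowing non-orientable targets would lower the relevant minimal volume to $V_{3}$ (attained by the Gieseking manifold), and the crude degree-$3$ bound would then no longer exclude the Whitehead, $6_{2}^{2}$, and $6_{3}^{2}$ cases.
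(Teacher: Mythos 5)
Your proof is correct and takes essentially the same route as the paper: the non-arithmetic case follows from Theorem~\ref{thm:main1} via the standard dictionary between irregular covers and hidden symmetries (which you spell out and the paper leaves implicit), and the arithmetic case is handled by exactly the paper's argument---an irregular cover has degree at least $3$, Cao--Meyerhoff gives the $2.029$ lower bound for orientable cusped manifolds, and all four arithmetic $2$-bridge complements have volume below $3(2.029) = 6.087$. One trivial correction: the $6_2^2$ complement has volume $4V_3 \approx 4.06$ rather than $\approx 5.33$ (the latter is the $6_3^2$ volume), but since both lie below the threshold this does not affect the argument.
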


By combining Corollary \ref{cor:main1} with the work of Boileau--Weidmann in \cite{BoWe}, we get the following characterization of $3$-manifolds with non-trivial JSJ-decomposition and rank two fundamental groups. For a more detailed description of this decomposition see Section \ref{subsec:irregular}. 

\begin{cor}
\label{cor:main2}
Let $M$ be a compact, orientable, irreducible $3$-manifold with rank($\pi_{1}(M)) = 2$. If $M$ has a non-trivial JSJ-decomposition, then one of the following holds:

1. $M$ has Heegaard genus $2$.

2. $M$ decomposes into a Seifert fibered $3$-manifold and hyperbolic $3$-manifold.

3. $M$ decomposes into two Seifert fibered $3$-manifolds.
\end{cor}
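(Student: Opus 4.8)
The plan is to deduce Corollary \ref{cor:main2} from Corollary \ref{cor:main1} by appeal to the structural result of Boileau--Weidmann in \cite{BoWe}. Their theorem classifies compact, orientable, irreducible $3$-manifolds $M$ with $\mathrm{rank}(\pi_1(M)) = 2$ and non-trivial JSJ-decomposition: such an $M$ either has Heegaard genus $2$, or else $M$ is built from exactly two JSJ-pieces glued along a single incompressible torus, with at least one piece being Seifert fibered, and moreover in the mixed case the hyperbolic piece must be covered in a controlled way by one of its boundary tori. I would begin by quoting their dichotomy precisely, isolating the only branch that needs further analysis: the case where $M$ decomposes into a Seifert fibered piece and a hyperbolic piece $N$, and where, in the Boileau--Weidmann description, the rank-two hypothesis forces $N$ to be irregularly covered by (or to be a $2$-bridge link complement associated to) a manifold whose commensurability behavior we can control.

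Next I would explain why the $2$-bridge link complements are exactly where Corollary \ref{cor:main1} enters. In the mixed Seifert-fibered/hyperbolic branch, the rank-two condition on $\pi_1(M)$ constrains the hyperbolic piece so tightly that, following Boileau--Weidmann, the hyperbolic JSJ-component $N$ must irregularly cover a smaller hyperbolic $3$-orbifold (this is the mechanism by which the genus-$2$ Heegaard splitting could fail to materialize). The key step is then to observe that the relevant $N$ is forced to be a hyperbolic $2$-bridge link complement. Invoking Corollary \ref{cor:main1}, a non-arithmetic such $N$ cannot irregularly cover any hyperbolic $3$-orbifold, and an arithmetic such $N$ cannot irregularly cover any orientable hyperbolic $3$-manifold; either way the offending irregular cover is excluded. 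Ruling out this configuration collapses the Boileau--Weidmann list down to the three stated alternatives, yielding items (1), (2), and (3) of the corollary.

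The main obstacle I anticipate is bookkeeping rather than new geometry: one must verify that the hyperbolic piece arising in the Boileau--Weidmann classification is genuinely a $2$-bridge link complement, and that the irregular cover their argument produces is precisely an irregular cover of a hyperbolic $3$-orbifold of the type excluded by Corollary \ref{cor:main1}. Care is needed with orientability, since Corollary \ref{cor:main1} gives a weaker conclusion in the arithmetic case (only orientable targets are excluded), so I would check that the JSJ-gluings in question keep us within orientable manifolds, which holds because $M$ is assumed orientable and the JSJ-decomposition of an orientable manifold has orientable pieces. With those two verifications in hand, the three surviving cases are exactly alternatives (1)--(3), completing the deduction.
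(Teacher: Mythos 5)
Your overall strategy is the same as the paper's: quote the Boileau--Weidmann classification, note that all but one of its alternatives appear verbatim in the statement of the corollary, and use Corollary \ref{cor:main1} to eliminate the remaining one. But your execution of the key step reverses the direction of the covering map, and the reversal is not harmless. In the Boileau--Weidmann theorem, the fourth alternative is that the hyperbolic JSJ-piece $H$ of $M$ \emph{is irregularly covered by} a hyperbolic $2$-bridge link complement: the $2$-bridge complement is the covering space and $H$ is the base. Your second paragraph instead asserts that the hyperbolic piece $N$ \emph{is} a $2$-bridge link complement which \emph{irregularly covers} a smaller hyperbolic $3$-orbifold, and then applies Corollary \ref{cor:main1} to $N$. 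Under that reading the argument has a genuine hole: in the arithmetic case Corollary \ref{cor:main1} only forbids irregular covers of orientable hyperbolic $3$-\emph{manifolds}, and arithmetic $2$-bridge link complements really do irregularly cover hyperbolic $3$-orbifolds (their commensurators are non-discrete, so they have infinitely many hidden symmetries). Moreover, the orbifold allegedly covered by $N$ is not a JSJ-piece of $M$, so your orientability patch in the final paragraph --- that the JSJ-decomposition of an orientable manifold has orientable pieces --- does nothing to rule it out; that check is only relevant to the object being covered, which in your configuration is not a piece of $M$ at all.

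With the covering direction stated correctly the proof is immediate, and your paragraph-three check is then exactly the right one: the base of the covering is the hyperbolic JSJ-piece $H$, which is an orientable hyperbolic $3$-manifold because $M$ is orientable and irreducible; Corollary \ref{cor:main1} says that a $2$-bridge link complement, arithmetic or not, never irregularly covers such a manifold; hence the fourth Boileau--Weidmann alternative cannot occur, leaving alternatives (1)--(3). Note also that there is no step in which one must ``verify that the hyperbolic piece is genuinely a $2$-bridge link complement,'' nor is the hyperbolic piece ``covered in a controlled way by one of its boundary tori''; both phrases misread the input theorem. So the idea is the right one, but as written the proposal misquotes Boileau--Weidmann at the decisive point and, on its own terms, fails precisely in the arithmetic case.
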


The original characterization given by Bolieau--Weidmann included a fourth a possiblity: a hyperbolic piece of $M$ is irregularly covered by a $2$-bridge link complement. Corollary \ref{cor:main1} eliminates this possibility.

Ruling out hidden symmetries also plays an important role in analyzing the \textit{commensurability class} of a hyperbolic $3$-orbifold $M = \mathbb{H}^{3} / \Gamma$.  By the commensurability class of a hyperbolic $3$-orbifold (or manifold) $N$, we mean the set of all hyperbolic $3$-orbifolds commensurable with $N$. A fundamental result of Margulis \cite{Mar} implies that $C(\Gamma)$ is discrete in $\text{Isom}(\mathbb{H}^{3})$ (and $\Gamma$ is finite index in $C(\Gamma)$) if and only if $\Gamma$ is non-arithmetic. Thus, in the arithmetic case, $M$ will have infinitely many hidden symmetries. In the non-arithmetic case, this result implies that the hyperbolic $3$-orbifold $\mathcal{O}^+ = \mathbb{H}^{3} / C^{+}(\Gamma)$ is the unique minimal (orientable) orbifold in the commensurability class of $M$. So, in the non-arithmetic case, $M$ and $M'$ are commensurable if and only if they cover a common minimal orbifold. Furthermore, when $M$ admits no hidden symmetries, $C^{+}(\Gamma) = N^{+}(\Gamma)$, and so, $\mathcal{O}^+$ is just the quotient of $M$ by its orientation-preserving symmetries. 

By using Theorem \ref{thm:main1} and thinking about commensurability in terms of covering a common minimal orbifold, we obtain the following result about commensurability classes of hyperbolic $2$-bridge link complements. 



\begin{thm}
	\label{thm:main2}
	The only pair of commensurable hyperbolic $2$-bridge link complements are the figure-eight knot complement and the $6_{2}^{2}$ link complement. 
\end{thm}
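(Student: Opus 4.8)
The plan is to separate the problem according to arithmeticity. Since arithmeticity is a commensurability invariant (Margulis \cite{Mar}), an arithmetic and a non-arithmetic $2$-bridge link complement can never be commensurable, so the two regimes may be analyzed independently. I will show that no two \emph{distinct} non-arithmetic $2$-bridge link complements are commensurable, and then dispose of the finitely many arithmetic links by a direct number-theoretic computation; together with the observation that the figure-eight and $6_{2}^{2}$ complements are in fact commensurable, this yields the theorem.

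For the non-arithmetic case, suppose $M = \mathbb{S}^{3}\setminus K = \mathbb{H}^{3}/\Gamma$ and $M' = \mathbb{S}^{3}\setminus K' = \mathbb{H}^{3}/\Gamma'$ are commensurable non-arithmetic hyperbolic $2$-bridge link complements. By Theorem \ref{thm:main1} neither admits hidden symmetries, so $C^{+}(\Gamma) = N^{+}(\Gamma)$ and $C^{+}(\Gamma') = N^{+}(\Gamma')$. Because the groups are non-arithmetic, Margulis's theorem makes $\mathcal{O}^{+} = \mathbb{H}^{3}/C^{+}(\Gamma)$ the unique minimal orientable orbifold of the commensurability class, and commensurability forces $M$ and $M'$ to cover this same orbifold. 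Using the absence of hidden symmetries to identify the commensurator with the normalizer, I obtain
\[
  \mathcal{O}^{+} \;=\; M/\mathrm{Sym}^{+}(M) \;=\; M'/\mathrm{Sym}^{+}(M').
\]
The problem thus reduces to a reconstruction statement: the minimal orbifold $\mathcal{O}^{+}$ determines the $2$-bridge link, so that $M$ and $M'$ have homeomorphic complements and hence $K = K'$.

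To carry out the reconstruction I would exploit the canonical (Epstein--Penner) cell decomposition, which for a $2$-bridge link complement is dual to the Sakuma--Weeks triangulation $\mathcal{T}$ by Gu\'eritaud \cite{Gu}. Being intrinsic to the complete hyperbolic structure, this decomposition is preserved by the commensurator, so it descends to a canonical cell decomposition of $\mathcal{O}^{+}$ and lifts uniquely to the canonical decompositions of both $M$ and $M'$. Intersecting with the cusp cross-sections, the labeled cusp triangulations $T$ and $T'$ are then each obtained by lifting a single labeled canonical decomposition of the (common) cusp cross-section orbifold of $\mathcal{O}^{+}$, a Euclidean $2$-orbifold which for $2$-bridge links I expect to be a pillowcase $S^{2}(2,2,2,2)$. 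Transporting the edge labels, which record the three-dimensional edge valences, through these coverings shows that the labeled tilings $\widetilde{T}$ and $\widetilde{T}'$ of $\mathbb{R}^{2}$ are similar, and since the labeled combinatorics of $\widetilde{T}$ encode the continued-fraction data of the $2$-bridge link, this forces $K = K'$. The hard part will be precisely this reconstruction: showing that the labeled canonical cusp decomposition can be read off from $\mathcal{O}^{+}$ independently of which cover one starts from, and that two distinct $2$-bridge links can never produce similar labeled tilings $\widetilde{T}$ — equivalently, pinning down the degree and geometry of the cover $M \to \mathcal{O}^{+}$ well enough to recover the torus cusp, with its full canonical triangulation, from the cusp orbifold of $\mathcal{O}^{+}$. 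This combinatorial rigidity is the same phenomenon exploited for Theorem \ref{thm:main1} and is supplied by the careful tiling analysis developed earlier in the paper.

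For the arithmetic case I would argue directly. All four arithmetic $2$-bridge link complements are cusped, and two cusped arithmetic hyperbolic $3$-manifolds are commensurable if and only if they have the same invariant trace field, since the presence of parabolics forces the invariant quaternion algebra to split; see \cite{MaRe}. It therefore suffices to compute these fields: the figure-eight knot and $6_{2}^{2}$ both have invariant trace field $\mathbb{Q}(\sqrt{-3})$ and are consequently commensurable, whereas the Whitehead link and $6_{3}^{2}$ have invariant trace fields that are distinct from each other and from $\mathbb{Q}(\sqrt{-3})$, so no further commensurabilities arise. Combined with the non-arithmetic analysis and the invariance of arithmeticity, this leaves the figure-eight and $6_{2}^{2}$ complements as the only commensurable pair.
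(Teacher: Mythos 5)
Your overall architecture matches the paper's exactly: the arithmetic links are disposed of by invariant trace fields (the figure-eight knot and $6_{2}^{2}$ link share $\mathbb{Q}(\sqrt{-3})$, while the Whitehead and $6_{3}^{2}$ links have $\mathbb{Q}(\sqrt{-1})$ and $\mathbb{Q}(\sqrt{-7})$), Margulis's theorem separates the arithmetic from the non-arithmetic commensurability classes, and in the non-arithmetic case Theorem \ref{thm:main1} gives $C^{+}(\Gamma)=N^{+}(\Gamma)$, so that $\mathcal{O}^{+}=M/Sym^{+}(M)$ is the unique minimal orientable orbifold and commensurability reduces to showing that $\mathcal{O}^{+}$ determines the link. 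All of these reductions are sound and are precisely how the paper proceeds.

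The genuine gap is the reconstruction step, which you assert rather than prove, and which you claim ``is supplied by the careful tiling analysis developed earlier in the paper.'' It is not. The earlier results (Lemma \ref{sum_lem}, Lemma \ref{SH_lemma}, Theorem \ref{SH_implies_iso}, Proposition \ref{syms}) concern simplicial automorphisms of a \emph{single} tiling $\widetilde{T}$ and the action of $Sym(M)$ on it; they say nothing about valence-preserving isomorphisms between the tilings $\widetilde{T}$ and $\widetilde{T}'$ arising from two \emph{different} words, nor about how to recover $\Omega$ from $\widetilde{T}$ or from the quotient cellulation of the orbifold cusp. That is exactly the content the paper must still supply in Section \ref{sec:comm}: Lemmas \ref{lemma:cusp1} and \ref{lemma:cusp2} compute the quotient cellulation $T_{C}$ on $S^{2}(2,2,2,2)$ explicitly in the two cases ($\Omega$ not palindromic or $n$ even, versus $\Omega$ palindromic with $n$ odd), encode it by a labeled line segment $l_{C}$, and prove that $l_{C}$ --- hence $T_{C}$ --- determines $\Omega$ up to inversion and switching $L$s and $R$s; Corollary \ref{cor:cusp} then rules out any equivalence between the two types of cellulations by locating their order-two singularities (at vertices of valence $\neq 2$ in the first case, at non-vertex points or valence-$2$ vertices in the second). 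Without this, or an equivalent argument at the level of $\mathbb{R}^{2}$ (for instance, generalizing Lemma \ref{SH_lemma} and its corollary from automorphisms of one tiling to isomorphisms between two tilings, so that clasping triangles, meridional edges, and horizontal strips are matched up, after which $\Omega$ can be read off a horizontal strip), your non-arithmetic case does not close: you have only reduced the theorem to the combinatorial rigidity statement, which you yourself flag as ``the hard part.'' That hard part is the bulk of the paper's proof.
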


We prove Theorem \ref{thm:main2} by analyzing the cusp of each minimal (orientable) orbifold, $\mathcal{O}^+$, in the commensurability class of a non-arithmetic hyperbolic $2$-bridge link complement. This orbifold always has one cusp since two component $2$-bridge links always have a symmetry exchanging the components. The cusp of this orbifold inherits a canonical cellulation from the canonical triangulation $T$ of the cusp(s) of $M$. By comparing minimal orbifold cusp cellulations, we establish this result. 

We now describe the organization of this paper. In Section \ref{sec:background}, we provide some background on $2$-bridge links, including an algorithm for building any $2$-bridge link from a word $\Omega$ in Ls and Rs. Section \ref{sec:CuspT} describes how to build the canonical triangulation of a $2$-bridge link complement and the corresponding cusp triangulation $T$ based on this word $\Omega$. In this section we also prove some essential combinatorial properties of $\widetilde{T}$, the lift of $T$ to $\mathbb{R}^{2}$. Section \ref{sec:sym} analyzes the possible symmetries of a $2$-bridge link complement in terms of the word $\Omega$, and describes the actions of these symmetries on $\widetilde{T}$. In Section \ref{sec:hiddensym}, we prove Theorem \ref{thm:main1}, Corollary \ref{cor:main1}, and Corollary \ref{cor:main2}. In Section \ref{sec:comm}, we prove Theorem \ref{thm:main2}.

The authors would like to thank David Futer for helpful conversations and guidance on this work. We would also like to thank the referee for making a number of helpful suggestions.


\section{Background on $2$-bridge links}
\label{sec:background}

In order to describe $2$-bridge links, we first need to define \textit{rational tangles}. First, a \textit{$2$-tangle} is a pair ($B$, $t$), where $t$ is a pair of unoriented arcs embedded in the $3$-ball $B$ so that $t$ only intersects the boundary of $B$ in four specified marked points: SW, SE, NW, and NE (if we think of $\partial B$ as the unit sphere centered at the origin in $\mathbb{R}^{3}$, then SW is the southwest corner $(\frac{-1}{\sqrt{2}},\frac{-1}{\sqrt{2}},0)$, SE is the southeast corner $(\frac{1}{\sqrt{2}},\frac{-1}{\sqrt{2}},0)$, etc). Rational tangles are a special class of $2$-tangles. The simplest rational tangles are the $0$-tangle and the $\infty$-tangle. The $0$-tangle consists of two arcs that don't twist about one another, with one arc connecting NW to NE, and the other arc connecting SW to SE. Similarly, the $\infty$-tangle consists of two unknotted arcs, with one arc connecting NE to SE and the other arc connecting NW to SW. Both of these tangles admit an obvious meridian curve contained on $\partial B$ that bounds an embedded disk in the interior of $B$. A rational tangle is constructed by taking one of these trivial tangles and alternating between twisting about the western endpoints (NW and SW) and twisting about the southern endpoints (SW and SE). This twisting process maps the meridian of the $0$-tangle ($\infty$-tangle) to a closed curve with rational slope $\frac{p}{q}$, which determines this tangle, hence the name rational tangle. A \textit{$2$-bridge link} is constructed by taking a rational tangle, connecting its western endpoints by an unknotted strand, and connecting its eastern endpoints by an unknotted strand. 


Here, we describe a $2$-bridge link $K \subset \mathbb{S}^{3}$ in terms of a word $\Omega$, which is a sequence of Ls and Rs: $\Omega = R^{\alpha_{1}}L^{\alpha_{2}}R^{\alpha_{3}}\cdots R^{\alpha_{n}}$, $\alpha_{i} \in \mathbb{N}$ (if $n$ is odd and the starting letter is $R$). The sequence $\left[\alpha_{1}+1, \alpha_{2}, \dots, \alpha_{n-1}, \alpha_{n}+1\right]$ gives the continued fraction expansion for the rational tangle $\frac{p}{q}$ used to construct a $2$-bridge link. Each $L$ corresponds to performing a left-handed half-twist about the NW and SW endpoints of a $0$-tangle and each $R$ corresponds to performing a right-handed half-twist about the SW and SE endpoints of an $\infty$-tangle. Each \textit{syllable}, i.e., each maximal subword $L^{\alpha_{i}}$ or $R^{\alpha_{i}}$, corresponds to two strands wrapping around each other $\alpha_{i}$ times. This word $\Omega$ gives a procedure to construct an alternating $4$-string braid between two $4$-punctured spheres, $S_{1}$ and $S_{c}$, where $S_{1}$ is exterior to the braid and $S_{c}$ is interior to the braid; see Figure \ref{fig:ex_link}. To construct a $2$-bridge link, we add a single crossing to the outside of $S_{1}$, and we add a single crossing to the inside of $S_{c}$. There is a unique way to add these crossings so that the resulting link diagram is alternating. Any $2$-bridge link can be constructed in this manner and we use the notation $K(\Omega)$ to designate the $2$-bridge link constructed by the word $\Omega$. The original source for this notation comes from the appendix of \cite{GuFu}, which contains more details of this construction. 

\begin{figure}
        \centering
        \begin{subfigure}[b]{0.4\textwidth}
                \includegraphics[scale=0.6]{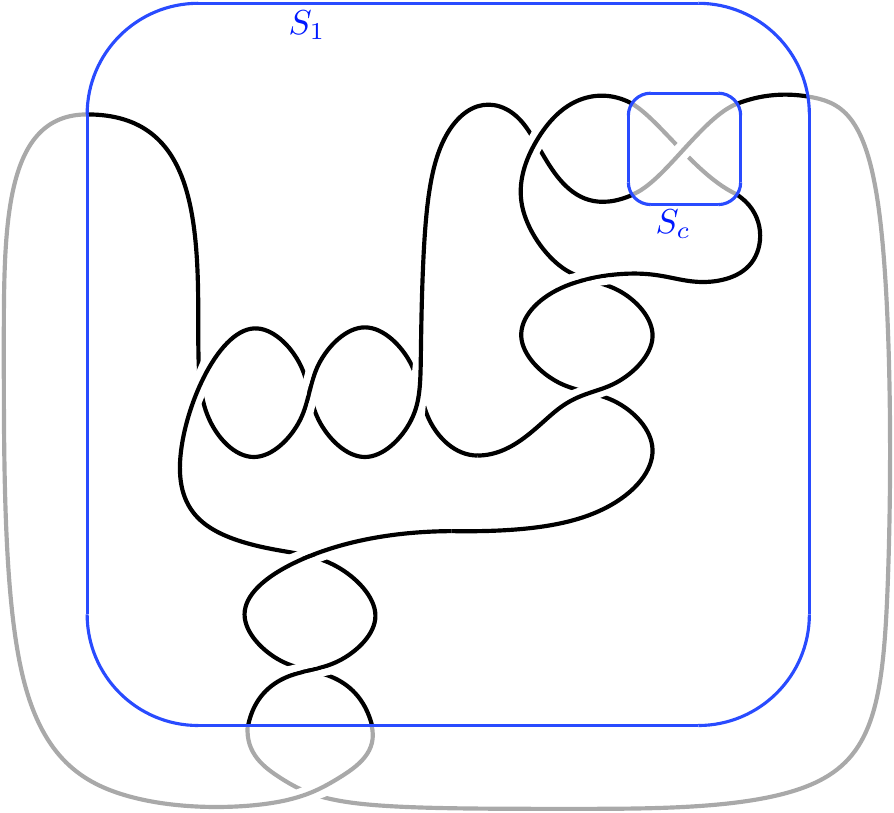}
                \caption{}
                \label{fig:ex_linkA}
        \end{subfigure}
       \qquad\qquad
        \begin{subfigure}[b]{0.4\textwidth}
                \includegraphics[scale=0.6]{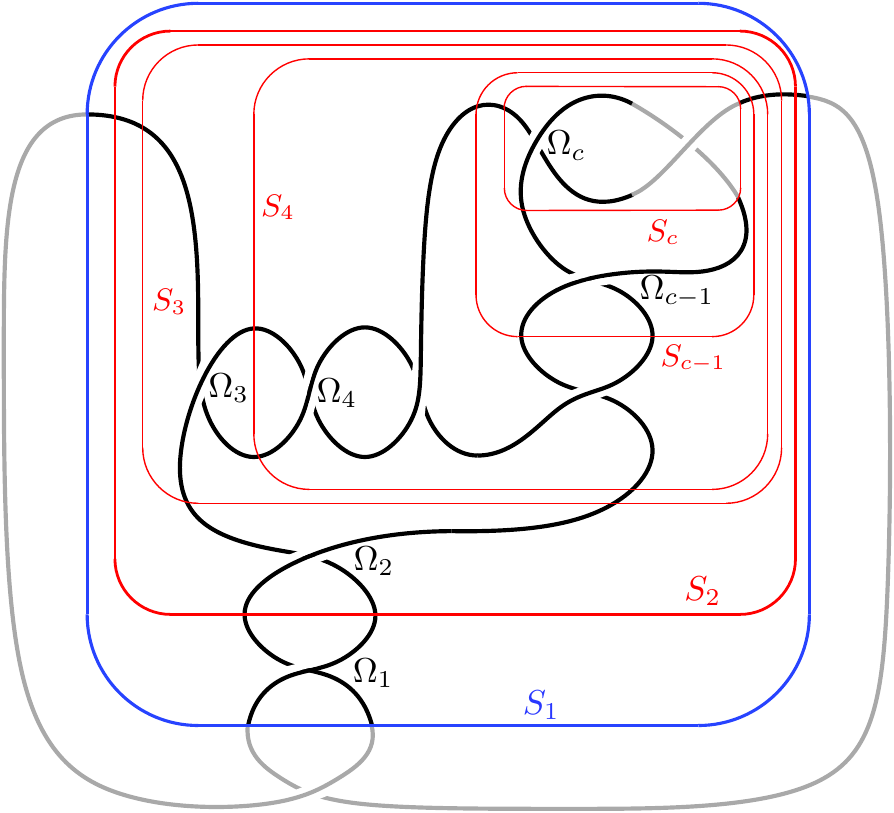}
                \caption{}
                \label{fig:ex_linkB}
        \end{subfigure}       
        \caption{Left: The link $K(\Omega)$, where $\Omega=R^2L^3R^2L$, read from $S_1$ inward to $S_c$. On the right is the same link, with crossings labelled and $4$-punctured spheres $S_i$ shown (note that $S_5$ and $S_6$ are omitted for readability).}
        \label{fig:ex_link}
\end{figure}

The following are important facts about $2$-bridge links that we will use. From now on, we will state results in terms of $K(\Omega)$ and we assume that any $2$-bridge link has been constructed in the manner described above, unless otherwise noted.
\begin{itemize}
\item Given a $2$-bridge link $K(\Omega)$, we obtain a mirror image of the same link (with orientations changed on $\mathbb{S}^{3}$) if we switch Ls and Rs in the word $\Omega$. Since we will only be considering unoriented link complements, we consider such links equivalent. 
\item $2$-bridge links (and their complements) are determined by the sequence of integers $\alpha_{1}, \dots, \alpha_{n}$ up to inversion. Schubert gives this classification of $2$-bridge knots and links in \cite{Schu}, and Sakuma--Weeks \cite[Theorem II.3.1]{SaWe} give this classification of their complements by examining their (now known) canonical triangulations.
\item A $2$-bridge link $K(\Omega)$ is hyperbolic if and only if $\Omega$ has at least two syllables. This follows from Menasco's classification of alternating link complements \cite{Men}. 
\item The \textbf{only} arithmetic hyperbolic $2$-bridge links are those listed below. This classification was given by Gehring--Maclachlan--Martin in \cite{GeMaMa}. 
\begin{itemize}
\item The figure-eight knot given by $RL$ or $LR$,
\item The Whitehead link given by $RLR$ or $LRL$,
\item The $6_{2}^{2}$ link given by $L^{2}R^{2}$ or $R^{2}L^{2}$, and
\item The $6_{3}^{2}$ link given by $RL^{2}R$ or $LR^{2}L$.
\end{itemize} 
We care about distinguishing between non-arithmetic and arithmetic hyperbolic link complements because different techniques have to be used for analyzing hidden symmetries and commensurability classes. 
\end{itemize}

Throughout this paper, we will always assume that $K(\Omega)$ is hyperbolic, i.e., $\Omega$ has at least two syllables. In Section \ref{sec:CuspT}, we will use the diagram of $K(\Omega)$ described above to build the canonical cusp triangulation of $\mathbb{S}^{3} \setminus K(\Omega)$. 


\section{Cusp triangulations of $2$-bridge link complements}
\label{sec:CuspT}

Let $K=K(\Omega)$ be a $2$-bridge link, defined as in Section \ref{sec:background}, with $\Omega$ a word in $R$ and $L$, and $\Omega_i$ its $i^{th}$ letter. We may assume that $\Omega_1=R$, as mentioned in Section \ref{sec:background}. In this section we give a description of the construction of the triangulation $\mathcal{T}$ of $\s^3\setminus K$, and of the induced cusp triangulation $T$, and its lift $\widetilde{T}$ (if $K$ has two components, then the two cusp triangulations are identical). We then describe an algorithmic approach for constructing $\widetilde{T}$, and prove some facts about simplicial homeomorphisms $f:\widetilde{T} \to \widetilde{T}$. Our description of these triangulations follows that of \cite[Appendix A]{GuFu} and \cite[Chapter II]{SaWe}, to which we refer the reader for further details.

To build the triangulation $\mathcal{T}$, we first place a $4$-punctured sphere $S_i$ at each crossing $\Omega_i$ corresponding to a letter of $\Omega$, so that every crossing $\Omega_j$ for $j\ge i$ is on one side of $S_i$, and the remaining crossings are on the other side; see Figure \ref{fig:ex_linkB}. We will start by focusing on $S_1$ and $S_2$. We triangulate both of them as shown in the first frame of Figure \ref{fig:S1_glue_S2} (notice that the edge from the lower-left to upper-right puncture is in front for both). If we push $S_1$ along the link to the other side of the crossing $\Omega_1$, we see that some of its edges coincide with edges of $S_2$ (in particular, the horizontal edges coincide, and the diagonal edges of $S_1$ become vertical in $S_2$, see Figure \ref{fig:S1_glue_S2}). The vertical edges of $S_1$, however, get pushed to diagonal edges that cannot be identified to the diagonal edges of $S_2$. The top frame of Figure \ref{fig:tetra_layer} shows $S_1$ and $S_2$ with appropriate edges identified, as seen lifted to $\R^2\setminus \Z^2$ (i.e., cut along top, bottom, and left edges then unfold). If we lift $S_1$  to $\R^2\setminus \Z^2$ in such a way that its triangulation has edge slopes $\frac{0}{1},\frac{1}{1},\frac{1}{0}$, this choice forces $S_2$ to have edge slopes $\frac{0}{1},\frac{1}{2},\frac{1}{1}$, as shown in the lower frame of Figure \ref{fig:tetra_layer}. This means that the triangulation of $S_2$ in $\R^2\setminus \Z^2$ is obtained by applying the matrix $R=\left( \begin{smallmatrix} 1&1\\0&1\\ \end{smallmatrix}\right)$ to the $S_1$ triangulation of $\R^2\setminus \Z^2$. If the letter $\Omega_1$ between $S_1$ and $S_2$ had been an $L$, we would have found by the same analysis that the matrix taking us from the triangulation of $S_1$ to the triangulation of $S_2$ must be $L=\left(\begin{smallmatrix} 1&0\\1&1\\ \end{smallmatrix}\right)$. This holds in general. If we know the edge slopes of the triangulation of $S_i$, we can apply the appropriate matrix, depending on whether $\Omega_i$ is an $R$ or an $L$, to get the triangulation of $S_{i+1}$ (see Figure \ref{fig:RL_move}).

\begin{remark}
Though we do not use this fact in what follows, the word $\Omega$ can be viewed as a path in the Farey tesselation, with each letter corresponding to making a right (for $R$) or left (for $L$) turn from one Farey triangle to the next. In this case each four punctured sphere $S_i$ corresponds to a Farey triangle, and its slopes are given by the vertices of that triangle. For details of this approach, we again direct the interested reader to \cite{GuFu} and \cite{SaWe}.
\end{remark}

Coming back to $S_1$ and $S_2$, we see in Figure \ref{fig:tetra_layer} that between the (red) triangulation of $S_2$ and the (blue) triangulation of $S_1$ is a layer of two tetrahedra, which we denote $\Delta_1$. Similarly, between the $4$-punctured spheres $S_i$ and $S_{i+1}$ we get a layer $\Delta_i$ of tetrahedra.  This construction results in a ``product region" $S\times I$, where $S\times \{0\}=S_1$ and $S\times \{1\}=S_c$. We use quotation marks here because for $\Omega \in \{RL^k, LR^k, RL^kR, LR^kL\}$, $S\times I$ is not a true product since there will be an edge shared by all the $S_i$. 

\begin{figure}
        \centering
        \begin{subfigure}[b]{0.50\textwidth}
                \includegraphics[scale=0.45]{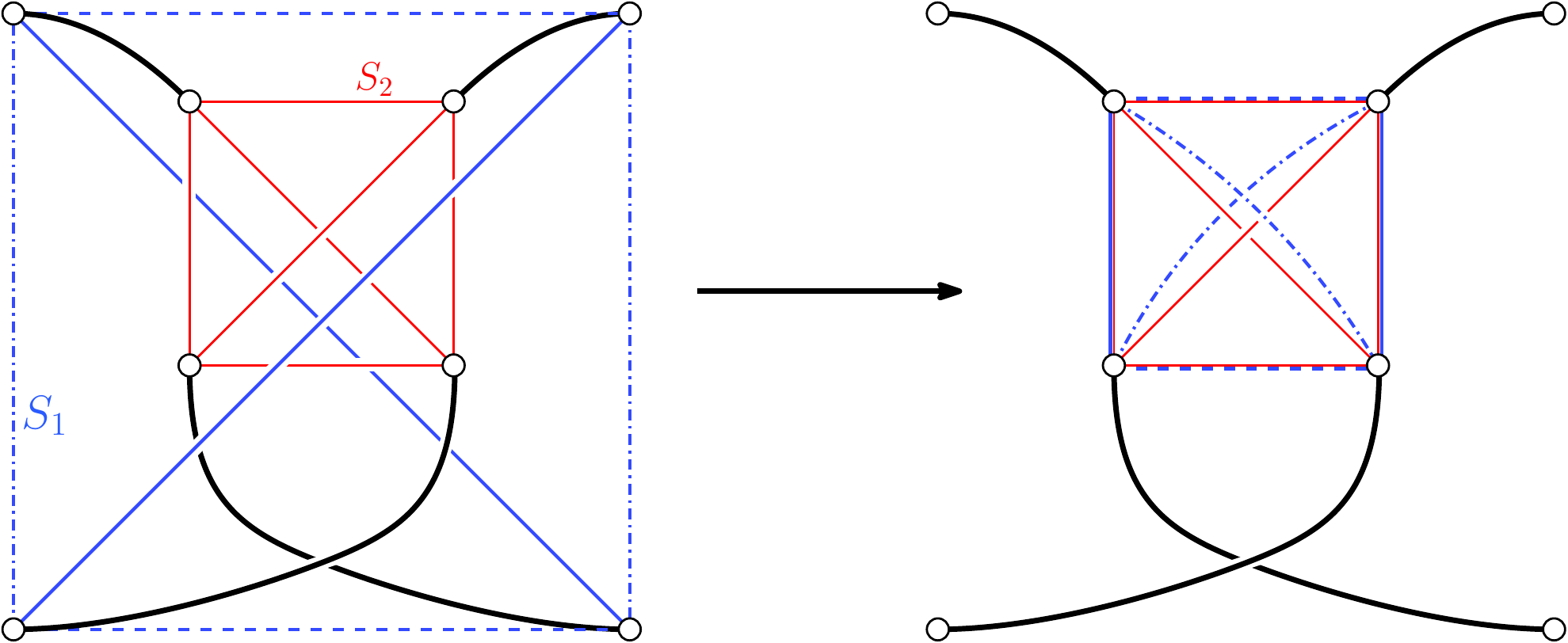}
                \caption{}
                \label{fig:S1_glue_S2}
        \end{subfigure}
       \qquad\qquad\qquad\qquad
        \begin{subfigure}[b]{0.25\textwidth}
                \includegraphics[scale=0.4]{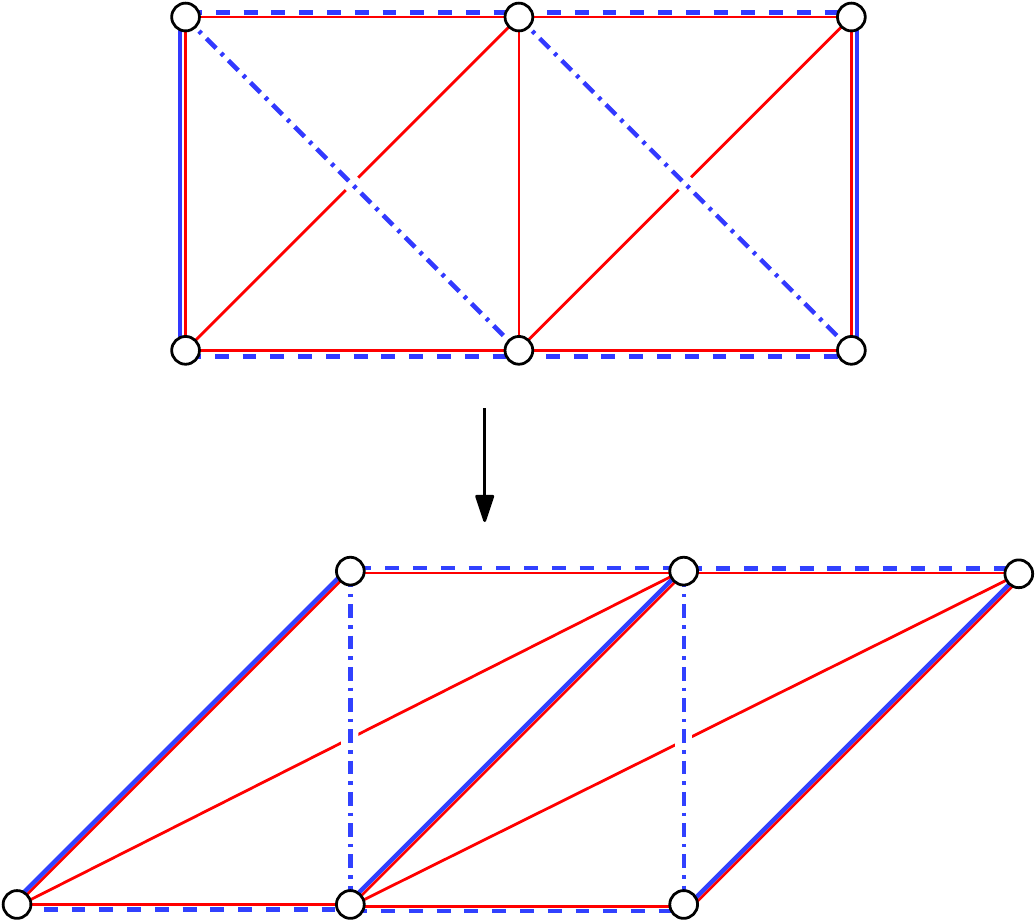}
                \caption{}
                \label{fig:tetra_layer}
        \end{subfigure}       
        \caption{On the left (A) we see which edges of $S_1$ are identified to edges of $S_2$, and what the region between $S_1$ and $S_2$ looks like. In the right figure (B) it is a little easier to see, with $S_1$ and $S_2$ unfolded, that the region between them is a pair of tetrahedra.}
        \label{fig:S1_to_S2}
\end{figure}

\begin{figure}[h]
    \centering
    \includegraphics[scale=.7]{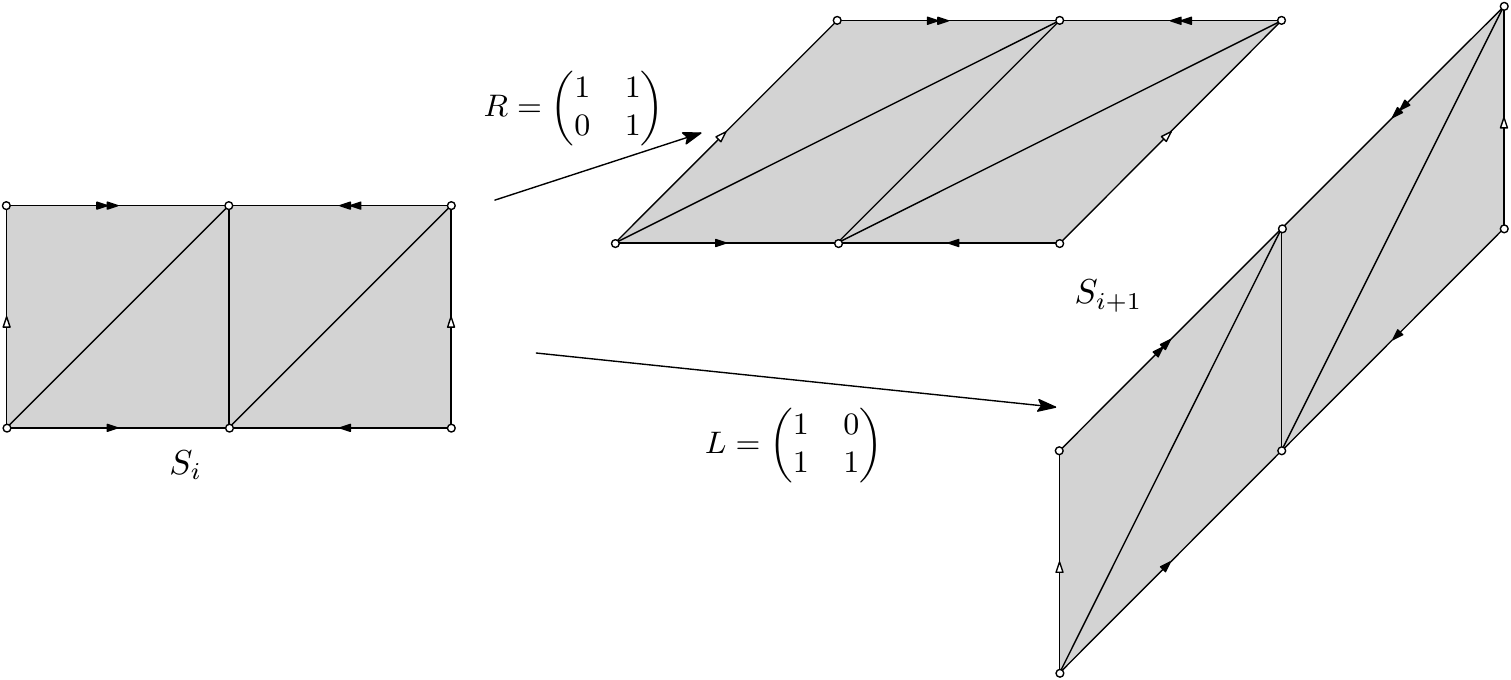}
    \caption{Attaching $S_2$ to $S_1$. In the upper right picture, the 1-skeleton of one of the tetrahedra in the layer $\Delta_1$ is shown in red.}
    \label{fig:RL_move}
\end{figure}

To obtain $\s^3\setminus K$ from $S\times I$, we first ``clasp" $S_1$ by folding along edges with slope $\frac{1}{1}$ and identifying pairs of triangles adjacent to those edges, as shown in Figure \ref{fig:clasping}. We clasp $S_c$ in the same way, this time folding along either the edge with greatest slope or the edge with least slope, depending on whether the final letter of $\Omega$ is $R$ or $L$, respectively.

\begin{figure}[h]
    \centering
    \includegraphics[scale=.8]{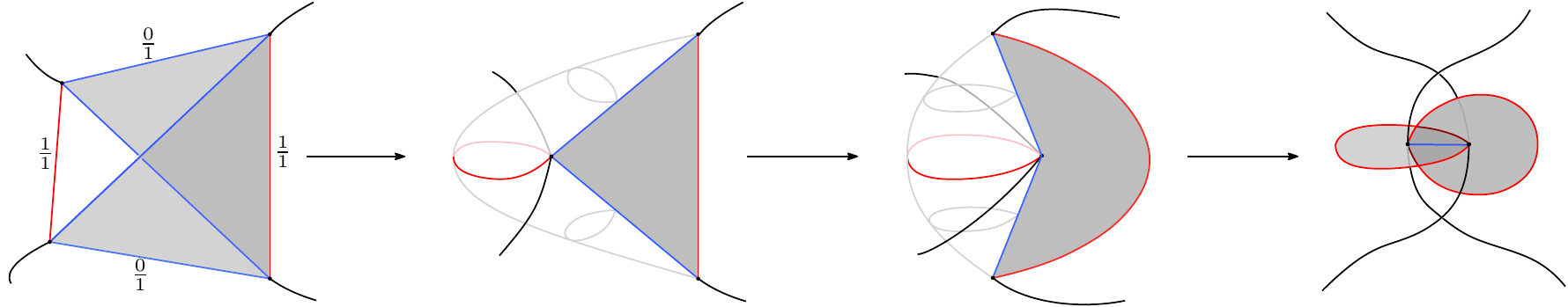}
    \caption{The clasping of $S_1$. The viewpoint of the reader is the ``inside" of $S_1$, i.e., the side containing the braid in Figure \ref{fig:ex_link}}
    \label{fig:clasping}
\end{figure}

To understand the induced triangulation $T$ of a cusp cross section, we first consider a neighborhood of a single puncture $P$ in $S\times I$. For each layer of tetrahedra $\Delta_i$ between $S_i$ and $S_{i+1}$, we get a pair of triangles $D_i$ and $D_i'$ going once around the puncture, as in Figure \ref{fig:cusp_cross1}. In this figure vertices of $D_i\cup D_i'$ are labelled according to the edges of $\Delta_i$ that they are contained in, and edges of $D_i\cup D_i'$ are labelled according to the edge of $\Delta_i$ that they are across a face from. Notice in Figure \ref{fig:cusp_cross1} that $D_i$ has a vertex ($c_-$) meeting an edge of $S_i$ but not meeting $S_{i+1}$, and $D_i'$ has a vertex ($\tilde{c}$) meeting $S_{i+1}$ but not meeting $S_i$. Thus $D_i$ is distinguished from $D_i'$.

\begin{figure}
        \centering
        \begin{subfigure}[b]{0.55\textwidth}
                \includegraphics[scale=1]{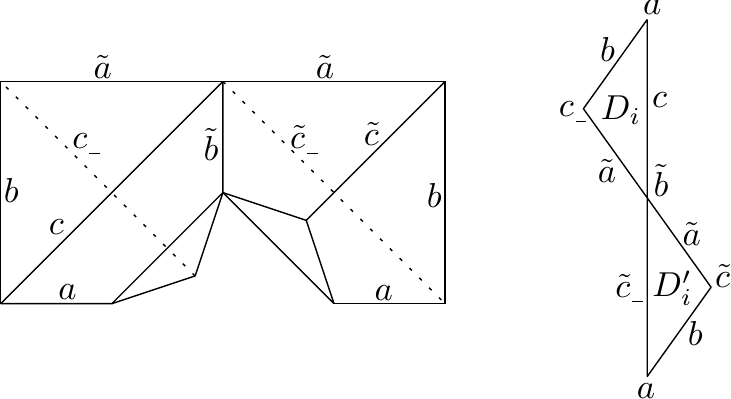}
                \caption{}
                \label{fig:cusp_cross1}
        \end{subfigure}
       \qquad\qquad
        \begin{subfigure}[b]{0.3\textwidth}
                \includegraphics[scale=1]{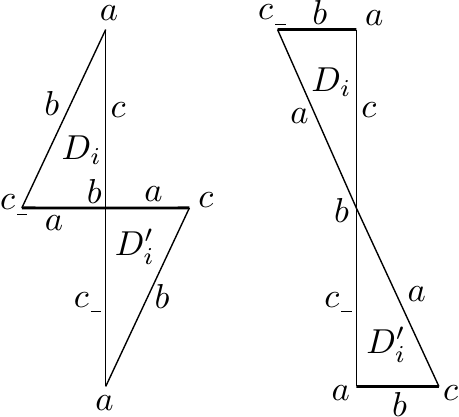}
                \caption{}
                \label{fig:EV_cor2}
        \end{subfigure}       
        \caption{(A): In (A), a layer $\Delta_i$ with a neighborhood of a cusp removed (left), and the triangles $D_i\cup D_i'$ that the layer $\Delta_i$ contributes to the cusp triangulation (right). Edges with the same slope have labels that differ by a $\sim$ decoration. Figure (B) shows $D_i$ and $D_i'$ after being adjusted as prescribed in Figure 6, with $\sim$ decorations removed so that edges with the same slope are labelled the same.}
        \label{fig:cusp_cross_EV}
\end{figure}

To see how $D_i\cup D_i'$ attaches to $D_{i-1}\cup D_{i-1}'$, we must consider how $\Delta_i$ attaches to $\Delta_{i-1}$. Figure \ref{fig:cusp_cross2a} shows $\Delta_{i}$ and $\Delta_{i-1}$ in $(\R^2\setminus \Z^2)\times I$ (sandwiched between $S_{i-1}\cup S_i\cup S_{i+1}$) in the case where $\Omega_{i}=R$, and the corresponding triangles around the puncture. There is a unique edge $e$ of $D_i\cup D_i'$, corresponding to an edge of $S_i$ shared by both $S_{i-1}$ and $S_{i+1}$, and with vertices $v_1\in S_{i-1}$ and $v_2\in S_{i+1}$. This means that the edge $e$ moves us along the cusp cross section in the longitudinal direction, so it will be part of a longitude in $\widetilde{T}$. It makes sense then to adjust these edges to be horizontal, as we build the triangulation $\widetilde{T}$ (see Figure \ref{fig:cusp_cross2a}). Figure \ref{fig:cusp_cross2b} shows the analogous adjustment when $\Omega_i=L$.

When we clasp $S_1$, an edge of $D_1$ is identified to an edge of $D_1'$, and similarly for $D_c'$ and $D_c$ when $S_c$ is clasped, as illustrated in Figure \ref{fig:cusp_trian1}. We will call the triangles $D_1$ and $D_c'$ \emph{clasping triangles}. For $\Omega=R^2L^3R^2L$, the triangulation around a puncture before clasping and after clasping is shown in Figures \ref{fig:cusp_trian1b} and \ref{fig:cusp_trian1c}, respectively.

\begin{figure}
        \centering
        \begin{subfigure}[b]{0.5\textwidth}
                \includegraphics[scale=0.8]{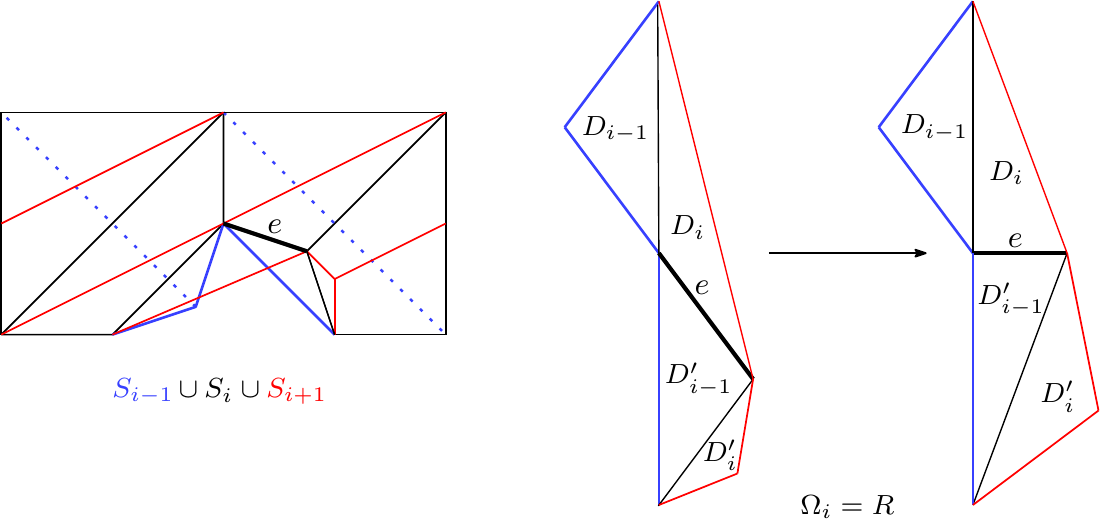}
                \caption{}
                \label{fig:cusp_cross2a}
        \end{subfigure}
       \qquad\qquad\qquad\quad
        \begin{subfigure}[b]{0.25\textwidth}
                \includegraphics[scale=0.9]{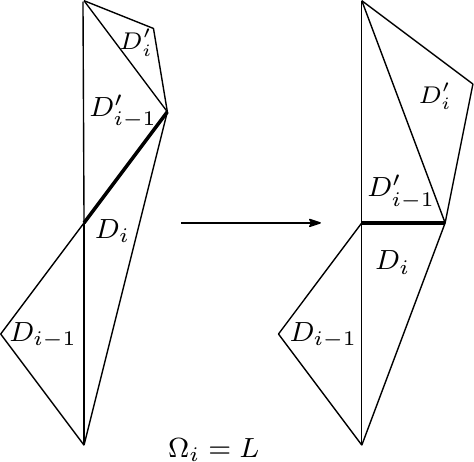}
                \caption{}
                \label{fig:cusp_cross2b}
        \end{subfigure}       
        \caption{Building the cusp triangulation. In (A), the left frame shows three layers of $4$-punctured spheres, with a truncated puncture. Note the special edge $e$ on the truncated puncture, also shown in the right frame, which connects $S_{i-1}$ to $S_{i+1}$. Note that in the two figures on the right, the top and bottom vertices are identified, and in (B) we have rotated (vertically) by $\pi$ to make the picture more clear.}
        \label{fig:cusp_cross2}
\end{figure}

\begin{figure}
        \centering
        \begin{subfigure}[b]{0.2\textwidth}
                \includegraphics[scale=.9]{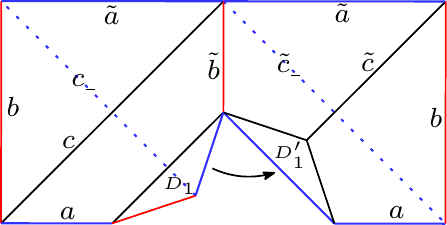}
                \caption{}
                \label{fig:cusp_trian1a}
        \end{subfigure}       
        \qquad\qquad
        \begin{subfigure}[b]{0.37\textwidth}
                \includegraphics[scale=.9]{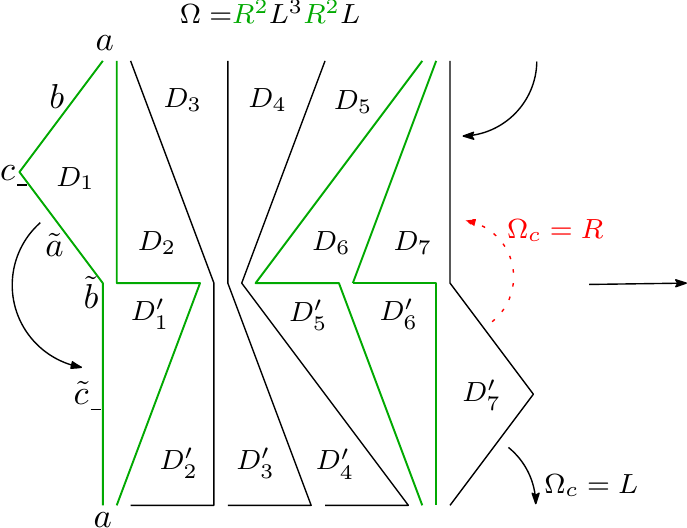}
                \caption{}
                \label{fig:cusp_trian1b}
        \end{subfigure}
       \qquad
               \begin{subfigure}[b]{0.25\textwidth}
                \includegraphics[scale=.9]{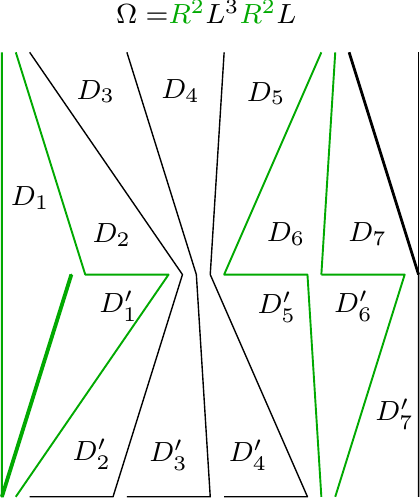}
                \caption{}
                \label{fig:cusp_trian1c}
        \end{subfigure}       
        \caption{The effect of clasping on the triangulation around a puncture. (A) shows $\Delta_1$, with $S_1$ below $S_2$, and edge colors of $S_1$ corresponding to colors in Figure \ref{fig:clasping}.}
        \label{fig:cusp_trian1}
\end{figure}

Before clasping, it is clear from the construction that the combinatorics around each of the four punctures is identical. Clasping identifies the punctures on $S_1=S\times\{0\}$ in pairs, and identifies the punctures on $S_c=S\times\{1\}$ in pairs, in an orientation preserving way. This means that for a $2$-component link, a cusp triangulation is obtained by gluing two puncture triangulations (as in Figure \ref{fig:cusp_trian1c}) along their front edges, and along their back edges, in an orientation preserving way. For a knot, the situation is similar, except that we glue all four puncture triangulations, always identifying front edges to front edges, and back to back, with orientation preserved. In both cases the lifted triangulation $\widetilde{T}$ of $\R^2$ is the same, except that the fundamental region for a knot is twice as large as for a link. Note that when $\Omega_1\neq \Omega_c$, the clasping triangle on the right is offset vertically from the clasping triangle on the left (as in Figure \ref{fig:cusp_trian1c}), whereas if $\Omega_1=\Omega_c$ this will not be the case. 

As a result of the above discussion, we can now give an algorithmic approach to constructing the lifted cusp triangulation $\widetilde{T}$ for an arbitrary word $\Omega=R^{\alpha_1}L^{\alpha_2}\dots L^{\alpha_n}$ (we will assume the last letter is $L$ for concreteness; the case where $\Omega_c=R$ is similar). This follows the approach of Sakuma-Weeks in \cite[II.4]{SaWe}, with some changes of notation. We start with a rectangle $D'=[0,1]\times [0,1] \subset \R^2$ divided into $c=\sum_i \alpha_i$ triangles, each corresponding to a letter of $\Omega$, as in Figure \ref{fig:T_alg1}. Vertices of $D'$ are labelled as shown, with $c_j=\sum_{i=1}^j \alpha_i$ for $1\le j\le n$, and $c_0=0$. To fill out $\R^2$ we first reflect $D'$ in its top edge to get its mirror $D$, so that $D\cup D'$ is a triangulation of a puncture (with triangles $D_i$ in $D$ and triangles $D_i'$ in $D'$), as in \ref{fig:cusp_trian1c}. We then rotate $D\cup D'$ by $\pi$ about $(0,1)$ (i.e., about the vertex labelled -1), and translate the resulting double of $D\cup D'$ vertically and horizontally to fill $\R^2$. Finally, we remove all edges $\overline{-1,1}$ and $\overline{r,c_n}$, where $r=c_{n-2}$ if $\alpha_n=1$, and $r=c_n-1$ otherwise (i.e. all images of the red edges in Figure \ref{fig:T_alg1}).

\begin{figure}[h]
    \centering
    \includegraphics[scale=.9]{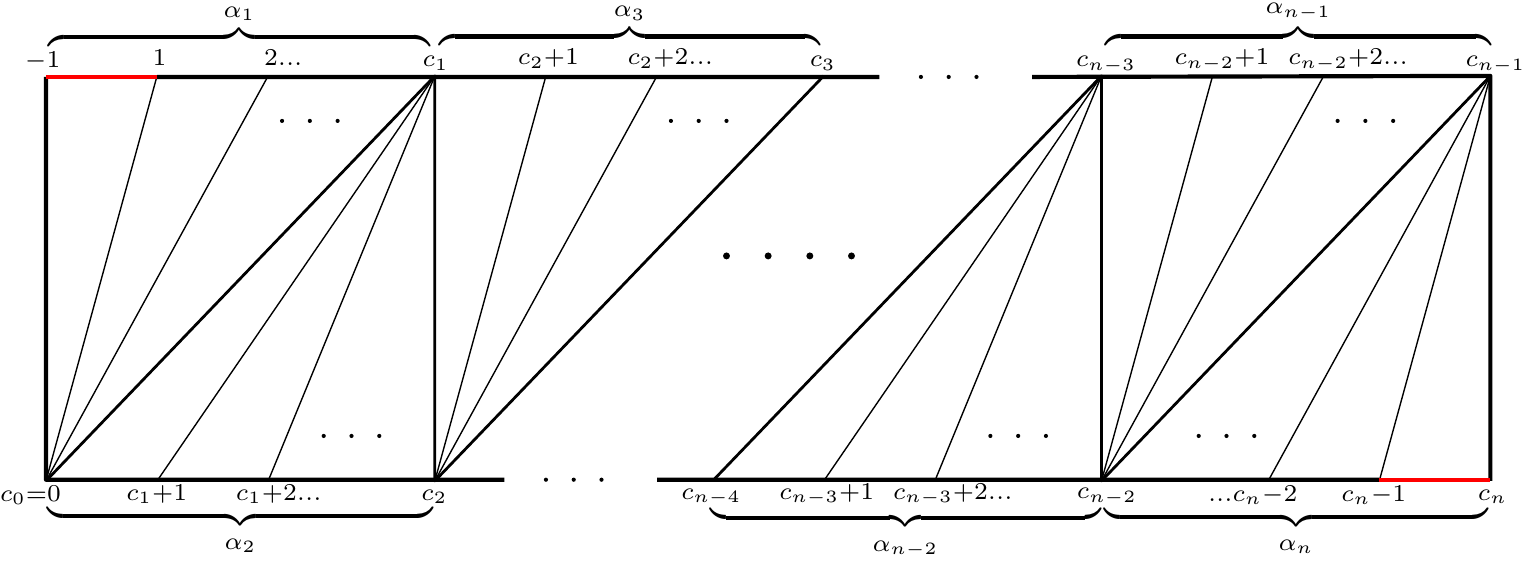}
    \caption{Triangulation of $D'=[0,1]\times [0,1]\subset \R^2$. The word $\Omega=R^{\alpha_1}L^{\alpha_2}\dots L^{\alpha_n}$ can be read from the triangulation. Here, $c_j=\sum_{i=1}^j \alpha_i$.}
    \label{fig:T_alg1}
\end{figure}

With this parametrization of the cusp triangulation in $\R^2$, deck transformations are generated by $(x,y)\mapsto (x,y+2)$ and $(x,y)\mapsto (x+k,y)$, where $k=2$ if $K=K(\Omega)$ has two components, and $k=4$ if it has one component. We observe that the long edge of each clasping triangle goes all the way around the meridian of the cusp, and these edges are unique in this respect. For this reason we call these edges \emph{meridional edges} (whether we are referring to them in $T$ or $\widetilde{T}$), and we call each connected component of their union in $\widetilde{T}$ a \emph{meridional line} (i.e., any line $x=c$, $c\in\Z$). A strip of adjacent non-clasping triangles that all meet the lines $y=m$ and $y=m+1$ (in an edge or vertex), for some $m\in \Z$, is called a \emph{horizontal strip} (see Figure \ref{fig:claspsandstrips}).

We will now describe a correspondence between edges and vertices of $\widetilde{T}$. Given an edge $e$ in $\widetilde{T}$, meaning a truncated tip of an ideal triangle in $\widetilde{\mathcal{T}}$, we have a corresponding edge in $\widetilde{\mathcal{T}}$: this is just the edge of $\widetilde{\mathcal{T}}$ across from $e$ in the ideal triangle, as in Figure \ref{fig:cusp_cross1}. Similarly, a vertex of $\widetilde{T}$ corresponds to the edge in $\widetilde{\mathcal{T}}$ that it is contained in. We say that an edge $e$ and a vertex $v$ of $\widetilde{T}$ \emph{correspond} if their corresponding edges in $\widetilde{\mathcal{T}}$ have the same slope (when viewed in $(\R^2\setminus \Z^2) \times I$).
Edge and vertex correspondence in $\widetilde{T}$, for edges and vertices that do not come from $S_1$ or $S_c$, can be read off of Figure \ref{fig:EV_cor2}, which shows the cusp cross section of a layer $\Delta_i$ with vertices and edges of the same slope labelled the same.
As for edges and vertices affected by clasping, we can easily read the correspondences off of the labellings in Figure \ref{fig:cusp_trian1} for the clasping of $S_1$, and the $S_c$ clasping works similarly.
This gives edge/vertex correspondences for $D\cup D'$, as shown in Figure \ref{fig:EV_cor} (as usual, we assume $\Omega_1=R$). A fundamental region of $T$ is constructed by gluing together either two or four copies of $D\cup D'$ by orientation reversing homeomorphisms $\{0\}\times [0,1] \to \{0\}\times [0,1]$ and $\{1\}\times [0,1] \to \{1\}\times [0,1]$, as previously discussed. Hence, the algorithmic construction of $\widetilde{T}$ by rotating $D\cup D'$ by $\pi$ about $(0,1)$ then translating to tile the plane respects edge valence, and so edge/vertex correspondence for all of $\widetilde{T}$ can be obtained in this way.  From here forward we will consider the edges of $\widetilde{T}$ to be labelled by the valence of a corresponding vertex, and we will refer to this number as the \emph{edge valence}.

\begin{figure}[h]
    \centering
    \includegraphics[scale=1]{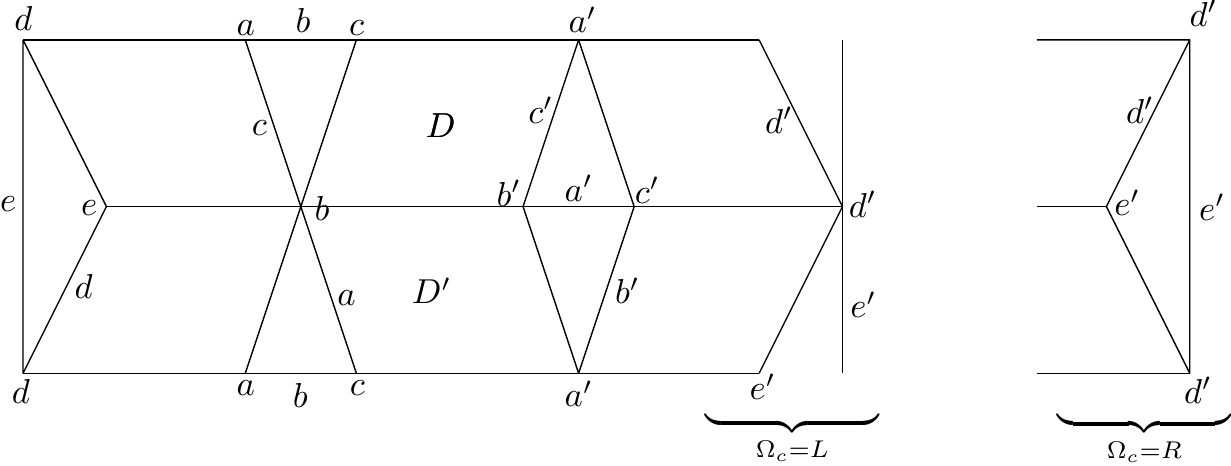}
    \caption{Edge/vertex correspondence in $\widetilde{T}$. Vertices and edges with the same slope (as edges in $\widetilde{\mathcal{T}}$), are labelled the same.}
    \label{fig:EV_cor}
\end{figure}

We summarize the preceding discussion in the following Lemma, part (d) of which corrects a minor error in the proof of Theorem II.3.1 in \cite{SaWe} (this error does not, however, affect the validity of their proof). Note that the relevant notation in \cite{SaWe} differs from ours in several ways: most importantly, what we call $val(i)$ they denote $d(i)$, and we follow a different indexing convention for vertices of $\tilde{T}$.

\begin{lemma}\label{sum_lem}
The lifted cusp triangulation $\widetilde{T}$ for the link given by a word $\Omega=R^{\alpha_1}L^{\alpha_2}R^{\alpha_3}\cdots L^{\alpha_n}$ has the following description:

\text{}\\[0 pt]
\hangindent=.5cm $\mathrm{\mathbf{(a)}}$ $\widetilde{T}$ is obtained from the triangulated rectangle $D'=[0,1]\times [0,1]\subset \R^2$, described by Figure \ref{fig:T_alg1}, as follows: reflect in $[0,1]\times \{1\}$ to get $D$, then rotate $D\cup D'$ about $(0,1)$, and translate the resulting two copies of $D\cup D'$ by $(x,y)\mapsto (x+2k,y+2m)$, $k,m\in \Z$, to tile $\R^2$.

\text{}\\[0 pt]
$\mathrm{\mathbf{(b)}}$ The deck group of $\widetilde{T}$ is generated by $(x,y)\mapsto (x,y+2)$ and $(x,y)\mapsto (x+\frac{4}{\epsilon},y)$, where $\epsilon\in \{1,2\}$ is the number of components of the link $K(\Omega)$.

\text{}\\[0 pt]
\hangindent=0.5cm$\mathrm{\mathbf{(c)}}$ Edge/vertex correspondence in $\widetilde{T}$ is as follows (see Figure \ref{fig:EV_cor}):

\text{}\\[0 pt]
\hangindent=1cm  $\bullet$ If $e$ is horizontal or $e$ is a meridional edge, then $e$ corresponds to the vertices across the two triangles adjacent to it.

\text{}\\[0 pt]
\hangindent=1cm  $\bullet$ If the lower endpoint of $e$ meets the line $y=k$, and the upper endpoint meets $y=k+1$, with k even (resp. odd), then $e$ corresponds to the vertex across the triangle to the left (resp. right) of $e$.

\text{}\\[0 pt]
\hangindent=0.5cm$\mathrm{\mathbf{(d)}}$ If $\Omega\notin \{R^2L^2, RL^m, RL^mR : m\ge 1\}$, then the vertices of $\widetilde{T}$, labelled as in Figure \ref{fig:T_alg1}, have valence as follows (recall that $r=c_{n-2}$ if $\alpha_n=1$, $r=c_n-1$ otherwise):

\text{}\\[0 pt]
\hangindent=1cm  $\bullet \quad val(c_i)=\left\{
\begin{array}{ll}
4\alpha_{i+1}+4 & \quad i\in \{0,n-1\}\\
2\alpha_{i+1}+4 & \quad 2\le i\le n-3 \quad or \quad i=1, \alpha_1>1 \quad or \quad i=n-2, \alpha_n>1\\
2\alpha_{i+1}+3 & \quad i=1, \alpha_1=1 \quad or \quad i=n-2, \alpha_n=1\\
\end{array}
\right.
$

\text{}\\[5 pt]
\hangindent=1cm  $\bullet\quad val(1)=\left\{
\begin{array}{ll}
3 & \alpha_1>1\\
2\alpha_2+3 & \alpha_1=1\\
\end{array}
\right.
$

\text{}\\[5 pt]
\hangindent=1cm
$\bullet\quad val(r)=\left\{
\begin{array}{ll}
3 & \alpha_n>1\\
2\alpha_{n-1}+3 & \alpha_n=1\\
\end{array}
\right.
$

\text{}\\[0 pt]
\hangindent=1cm  $\bullet \quad val(j) =4 \quad for \quad j\notin \{0,1,c_1, c_2,\dots, c_n, r\}$

\end{lemma}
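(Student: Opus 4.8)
The plan is to observe first that parts (a)--(c) carry no independent content: they merely record the algorithmic construction of $\widetilde{T}$, its deck group, and the edge/vertex correspondence, all of which were derived in the discussion preceding the statement, so the proof of each is the single word ``construction''. The whole burden therefore falls on the valence computation in (d). Throughout I would use the correspondence of part (c): the quantity $val(v)$ is the valence of the edge of $\widetilde{\mathcal{T}}$ corresponding to $v$, which equals the number of tetrahedra around that edge and hence the number of triangles of $\widetilde{T}$ incident to $v$, i.e. the degree of $v$. So (d) becomes a purely combinatorial count of triangle-corners around each vertex.

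By part (a), $\widetilde{T}$ is tiled by the even translates of the $2\times 2$ block $(D\cup D')\cup \rho(D\cup D')$, where $\rho$ is rotation by $\pi$ about $(0,1)$, with the red edges $\overline{-1,1}$, $\overline{r,c_n}$ and all their translates deleted. The degree of a vertex $v$ is then the sum, over the copies of $D\cup D'$ meeting $v$, of the triangle-corners each contributes at $v$, decreased by one for every deleted red edge incident to $v$ (deleting an edge merges the two triangles across it). I would first dispose of the generic case: a vertex $j\notin\{0,1,c_1,\dots,c_n,r\}$ is a non-apex vertex of a fan lying on a horizontal gluing line, so it meets two fan triangles from the copy above and two from the reflected copy below, with no red edge incident; hence $val(j)=4$. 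This is the baseline against which every special vertex is measured.

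For a pivot vertex $c_i$ I would count inside a single copy of $D'$: it is the apex of the fan of $\alpha_{i+1}$ triangles of syllable $i+1$, and in addition meets one triangle of each neighbouring syllable, for $\alpha_{i+1}+2$ corners when both neighbours are present. An interior pivot lies on a horizontal gluing line and is met from both sides, doubling this to $val(c_i)=2\alpha_{i+1}+4$. At the endpoint pivots $c_0$ and $c_{n-1}$ only one neighbouring syllable exists, so a single copy contributes $\alpha_{i+1}+1$; but because $c_0$ sits on the rotation-centre column $x=0$ and $c_{n-1}$ is created by the clasping of $S_c$, four copies of $D\cup D'$ meet such a vertex, giving $4(\alpha_{i+1}+1)=4\alpha_{i+1}+4$. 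The ``$+3$'' subcases for $i=1$ and $i=n-2$ are exactly the situations $\alpha_1=1$ or $\alpha_n=1$, in which a deleted red edge is forced to be incident to $c_i$ and removes one corner.

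The delicate part, and the step I expect to be the main obstacle, is the pair of clasping vertices $1$ and $r$, where the red edges $\overline{-1,1}$ and $\overline{r,c_n}$ are deleted; this is also where the minor error in \cite{SaWe} occurs. I would separate the regimes $\alpha_1>1$ and $\alpha_1=1$ (and symmetrically for $\alpha_n$ at $r$). When $\alpha_1>1$, vertex $1$ is an ordinary fan vertex of syllable $1$ from which the single red edge has been removed, so $val(1)=4-1=3$; when $\alpha_1=1$ the vertex $1$ coincides with the pivot $c_1$ and becomes the apex of the syllable-$2$ fan, whence $val(1)=2\alpha_2+3$, consistently with the pivot formula. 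Finally I would check that the excluded words $\Omega\in\{R^2L^2,\,RL^m,\,RL^mR:m\ge1\}$ are precisely the short words in which the special vertices $0,1,c_1,\dots,c_n,r$ are forced to collide (for instance, when $n=2$ the range $2\le i\le n-3$ is empty and $1,r$ meet the pivots), so that the distinct-vertex counts above break down; excluding them legitimizes the formulas as stated. The hardest bookkeeping is confirming that the rotation about $(0,1)$ together with the two red-edge deletions interacts correctly at all four corner/clasping vertices at once, since this is exactly where the over- or under-counting responsible for the \cite{SaWe} discrepancy can creep in.
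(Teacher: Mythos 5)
Your proposal is correct and takes essentially the same approach as the paper: the paper gives no separate argument for this lemma (it is explicitly a summary of the preceding construction), and your plan --- treating (a)--(c) as records of the construction and proving (d) by counting triangle corners around each vertex of the planar tiling, doubling across grid lines, quadrupling at lattice corners, and subtracting one for each incident deleted red edge --- is exactly the count the construction entails, with all of your cases (generic fan vertices, interior pivots, the corner pivots $c_0,c_{n-1}$, and the clasping vertices $1$ and $r$) yielding the stated values. One caveat: your characterization of the excluded words as ``precisely where special vertices collide'' is not accurate --- for $RL^m$ the failure is that $1=c_1$ lands at a lattice corner (so its valence is $4m+2$, not $2m+3$), for $RL^mR$ it is that $1=r$ so both red edges meet one vertex (valence $2m+2$), while for $R^2L^2$ nothing collides and the formulas actually hold (that exclusion only guards against the overlapping case conditions in the statement, e.g.\ $i=1$ versus $i=n-1$ when $n=2$) --- but since the lemma asserts nothing about excluded words, this imprecision does not affect the validity of your proof.
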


\text{}\\[10 pt]
In particular, note that for all $\Omega \notin \{R^2L^2, RL^m, RL^mR : m\ge 1\}$, $val(j)$ is odd if and only if $j\in\{1,r\}$. This fact is key to showing that non-arithmetic $2$-bridge links cannot have hidden symmetries. Since a hidden symmetry restricts to an isometry of $\widetilde{\mathcal{T}}$, it is a \emph{simplicial automorphism} of $\widetilde{\mathcal{T}}$ (i.e., a homeomorphism $\widetilde{\mathcal{T}}\to\widetilde{\mathcal{T}}$ preserving the simplicial structure) and hence it is a simplicial automorphism of $\widetilde{T}$ that preserves edge valence. 

\begin{defn}
We denote by $\mathrm{Aut}_{ev}(\widetilde{T})$ the \emph{group of simplicial automorphisms of $\widetilde{T}$ that preserve edge valence}.  Note that if we identify $\tilde{T}$ with the horoball centered at $p$, then there is a natural injection $\mathrm{Stab}_{\mathrm{Aut}(\tilde{\mathcal{T}})}(p)\hookrightarrow \mathrm{Aut}_{ev}(\widetilde{T})$.
\end{defn}

By analyzing $\mathrm{Aut}_{ev}(\widetilde{T})$, which must preserve these odd valence vertices, we learn about the possible isometries of $\widetilde{\mathcal{T}}$. The first step in this process is the following lemma:

\begin{lemma}\label{SH_lemma} If $\Omega\notin\{RL, R^2L^2, RLR\}$, then $\mathrm{Aut}_{ev}(\widetilde{T})$ preserves clasping triangles and meridional edges. 
\end{lemma}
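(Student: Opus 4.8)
The plan is to show that any $f\in\mathrm{Aut}_{ev}(\widetilde{T})$ sends meridional edges to meridional edges; since the clasping triangles are exactly the triangles incident to a meridional edge (each clasping triangle has a unique meridional edge as its long side, and conversely every triangle meeting a meridional line in a full edge is clasping), preserving meridional edges is equivalent to preserving clasping triangles, so it suffices to control the edges. The starting observation is that every element of $\mathrm{Aut}_{ev}(\widetilde{T})$ is in particular a simplicial automorphism, and therefore preserves the two-dimensional valence of each vertex. By Lemma \ref{sum_lem}(d) and the remark following it, whenever $\Omega\notin\{R^2L^2,RL^m,RL^mR:m\ge 1\}$ the vertices of odd valence are precisely the translates of the two vertices labeled $1$ and $r$. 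Hence $f$ permutes the set of odd-valence vertices, and in fact preserves each valence class separately (when $\mathrm{val}(1)\neq \mathrm{val}(r)$ it cannot even interchange the two families).

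The core of the argument is a local analysis showing that the meridional edges are combinatorially pinned down by the odd-valence vertices. Each clasping triangle meets an odd-valence vertex, and I would argue that its long (meridional) edge is the unique edge in a neighborhood of that vertex realizing a prescribed pattern, read off using the edge/vertex correspondence and valence labels of Lemma \ref{sum_lem}(c)--(d). In the simplest subcase $\alpha_1,\alpha_n>1$ the odd vertices have valence $3$, so each is surrounded by exactly three triangles, one of which is clasping, and the meridional edge is the edge of that triangle opposite the valence-$3$ vertex. Because $f$ fixes the odd-valence vertex set and preserves both the simplicial structure and the edge-valence labels, it must carry this distinguished local configuration to one of the same type, and so send meridional edges to meridional edges. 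The subcases $\alpha_1=1$ or $\alpha_n=1$, where the relevant odd vertex has the larger valence $2\alpha_2+3$ or $2\alpha_{n-1}+3$, require separately checking that the clasping triangle is still singled out among the triangles incident to the odd vertex by its edge-valence pattern.

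The hypotheses of Lemma \ref{sum_lem}(d) exclude the families $RL^m$ and $RL^mR$ ($m\ge 1$), whereas the present lemma only excludes $RL$, $R^2L^2$, and $RLR$. Thus the words $RL^m$ and $RL^mR$ with $m\ge 2$ are not covered by the odd-valence fact and must be treated by hand. For these two families the region $S\times I$ is not a genuine product---there is a single edge shared by all the $S_i$---so I would recompute the vertex valences directly, locate the odd-valence vertices in the resulting highly structured triangulation, and rerun the local argument; note that the arithmetic link $RL^2R=6_3^2$ falls into this family and is handled the same way, the lemma being a purely combinatorial statement about $\widetilde{T}$ independent of arithmeticity.

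I expect the main obstacle to be the local characterization step: making precise, and verifying in every subcase, that the meridional edge is genuinely the \emph{unique} edge near an odd-valence vertex carrying the relevant valence pattern, so that no non-meridional edge can be mistaken for a meridional one by an abstract edge-valence-preserving automorphism. The secondary difficulty is confirming that the exceptional families $RL^m,RL^mR$ ($m\ge 2$) do not acquire extra automorphisms mixing clasping and non-clasping triangles, since the degenerate shared edge alters the valence bookkeeping there.
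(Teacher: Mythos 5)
Your proposal follows the same skeleton as the paper's proof: use the fact that elements of $\mathrm{Aut}_{ev}(\widetilde{T})$ preserve vertex valences, invoke Lemma \ref{sum_lem}(d) to see that odd-valence vertices are permuted, pin down clasping triangles by a combinatorial analysis near those vertices, and treat the degenerate word families separately. The problem is that the two places you flag as ``obstacles'' are exactly where the entire content of the proof lies, and your key claim there is not merely unverified but false. Take $\Omega=R^2L^4R^{\alpha_3}\cdots$ (a word squarely inside your ``generic'' case): then $val(0)=4\alpha_1+4=12$ and $val(c_1)=2\alpha_2+4=12$, and since $\alpha_1=2$ the three neighbors of the valence-$3$ vertex $1$ are two $0$-type vertices and $c_1$. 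Hence \emph{all three} triangles around that odd vertex, the clasping triangle $\bigtriangleup_{1,0,0}$ and the two copies of $\bigtriangleup_{1,0,c_1}$, carry the identical vertex-valence triple $(3,12,12)$, so no argument confined to the star of an odd-valence vertex can single out the clasping triangle. (A second such family is $RLR^2\cdots$, where the clasping triangle and the adjacent triangle $\bigtriangleup_{1,0,c_2}$ both have triple $(5,8,8)$.) The paper's proof gets past this with a genuinely non-local step: if $\bigtriangleup_{1,0,0}\mapsto\bigtriangleup_{1,0,c_1}$, then the companion clasping triangle across the meridional edge is forced to map onto the triangle across the edge $\overline{0,c_1}$, sending its valence-$3$ tip to the vertex $c_1+1$, which has valence $4$ --- a contradiction. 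Analogous second-order arguments are carried out for the $\alpha_1=1$ subcases ($b=c_2$ and $b=c_3$). None of this appears in your proposal, so the core step is a gap.

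Your plan for the exceptional families also cannot work as described. For $\Omega=RL^mR$ ($m\ge2$) \emph{every} vertex of $\widetilde{T}$ has even valence: the clasping triangles have valence triple $(8,8,2m+2)$ and all remaining vertices have valence $4$, so there are no odd-valence vertices to ``locate and rerun the local argument'' around. For $\Omega=RL^m$ only one of the two families of clasping triangles contains an odd (valence-$3$) vertex; the other has tip valence $4m+2$, so your argument sees at most half of what must be controlled. The paper instead shows that in these families clasping triangles are the \emph{unique} triangles realizing their valence triples, and when the triple degenerates ($m=3$ in $RL^mR$, so $2m+2=8$) it separates the tip from the corner vertices by a finer invariant: tips have four edges to valence-$4$ vertices, corners only two. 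Relatedly, your reduction ``preserving meridional edges is equivalent to preserving clasping triangles'' needs this same tip/corner distinction in the direction you actually use (triangles to edges), and that distinction is exactly what fails to be automatic in these families. Finally, note that the paper's generic case excludes all of $R^kL^m$ and $RL^mR^k$, not just the words excluded in Lemma \ref{sum_lem}(d), because for $n=2,3$ the generic valence formulas and neighbor structure degenerate; your case split would need to be checked against this as well.
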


\begin{proof}
By the symmetry of the problem, we need only show that any triangle $\bigtriangleup_{1,0,0}$ with vertex labels $\{1,0,0\}$ maps to a clasping triangle. Let $f \in \mathrm{Aut}_{ev}(\widetilde{T})$, and let $\bigtriangleup_{a,b,b'}$ be the image of a triangle $\bigtriangleup_{1,0,0}$ under $f$, so that $1\mapsto a$. 

\textbf{Case 1:} $\Omega\notin \{R^kL^m, RL^mR^k\}$. Since $val(j)$ is odd if and only if $j\in \{1,r\}$, we must have $a\in \{1,r\}$. We will assume that $a=1$; the case $a=r$ is proved similarly. Then $b\in\{0,c_1,c_2,c_3\}$ since $val(0)=4\alpha_1+4\ge 8$ and all other vertices that could share an edge with $1$ have valence $4$.

If $val(1)=3$ (i.e., $\alpha_1>1$), then $b\in \{0,c_1\}$, since in this case no vertex $c_2$ or $c_3$ is connected to $1$   by an edge. If $b=c_1$, then we must have $\alpha_1=2$, so that $val(0)=4\alpha_1+4=12 =val(c_1)=2\alpha_2+4 \implies \alpha_2=4$, which means that $c_1+1$ must have valence $4$. But  $b=c_1$ also implies that $c_1+1$ is the image of the valence $3$ vertex of the clasping triangle that shares a meridional edge with $\bigtriangleup_{1,0,0}$, giving a contradiction. Thus $b=0$, and by the same argument we must also have $b'=0$.

If $val(1)\ne 3$, then $\alpha_1=1$ and $val(1)=val(c_1)=2\alpha_2+3$, and we must have $b\in \{0,c_2,c_3\}$. Also, $val(0)=4\alpha_1+4=8$.

If $b=c_2$, then $2\alpha_3+4=val(c_2)=val(0)=8$, so $\alpha_3=2$. This implies that $val(c_2+1)=4\ne 8$, so we must have $\bigtriangleup_{1,0,0}\mapsto \bigtriangleup_{1,c_2,0}$. This determines the image of the two non-clasping triangles $\bigtriangleup_{0,c_1,c_2}$ adjacent to $\bigtriangleup_{1,0,0}$, and we see that the $c_2$ vertex of one of these must be mapped to a $c_2+1$ vertex, which  is impossible since $val(c_2+1)=4\ne 8=val(c_2)$. 

If $b=c_3$ then $1=c_1$ and $c_3$ are connected by an edge, so $\alpha_3=1$, which forces the other $0$-labelled vertex of $\bigtriangleup_{1,0,0}$ to map to $c_2$, which is impossible by the above argument. Hence $b=0$, and by the same argument we have $b'=0$.

That meridional edges map to meridional edges is immediate since $\Omega\notin \{R^kL^m, RL^mR^k\}$ implies that clasping triangles have a unique odd valence vertex, i.e., the vertex not meeting a meridional edge.

\textbf{Case 2:} $\Omega=R^kL^m$ and $\Omega \notin\{RL, R^2L^2\}$. If $k=1$, then
clasping triangles either have vertices with valences $8,8,4m+2$ or $3,4m+2,4m+2$, and they are the only triangles in $\widetilde{T}$ with such a triple of valences. If $k\ne 1$ then clasping triangles either have vertices with valences $3,4k+4,4k+4$ or $3,4m+4,4m+4$, and they are the only triangles in $\widetilde{T}$ with such a triple of valences. Furthermore, in every case two of these vertices have equal valence and the third has distinct valence, so meridional edges must be preserved.

\textbf{Case 3:} $\Omega=RL^mR^k$, $\Omega\neq RLR$. Then $\alpha_1=1\implies val(0)=8$. If $k>1$ then $val(1)\neq val(r)$, so $1\mapsto 1$ and we must have vertices labelled $0$ mapping to vertices labelled $0$ or $c_2=c_{n-1}$. But $val(c_{n-1})=4k+4\ne 8$, so $0\mapsto 0$. If $k=1$ then clasping triangles all have vertices with valences $8,8,2m+2$, and they are the only triangles in $\widetilde{T}$ with this triple of valences. Furthermore, meridional edges are preserved since even when $m=3$ (so that $2m+2=8$), the vertices labelled $1=r$ are combinatorially distinct from the vertices labelled $0$ and $c_{n-1}$: vertices labelled $1$ have four edges connecting them to valence $4$ vertices, while vertices labelled $0$ and $c_{n-1}$ have only two such edges. 
\end{proof}

\begin{cor}
If $\Omega\notin\{RL, R^2L^2, RLR\}$, then $\mathrm{Aut}_{ev}(\widetilde{T})$ preserves horizontal strips of $\widetilde{T}$.
\end{cor}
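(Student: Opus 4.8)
The plan is to bootstrap everything from Lemma~\ref{SH_lemma}, which guarantees that any $f\in\mathrm{Aut}_{ev}(\widetilde T)$ sends clasping triangles to clasping triangles and meridional edges to meridional edges. Since the meridional lines are exactly the connected components of the union of all meridional edges, $f$ permutes the meridional lines; as these are the vertical lines $x=c$, $c\in\Z$, we may regard $f$ as permuting the ``columns'' of $\widetilde T$. Moreover, because $f$ carries each clasping triangle to a clasping triangle and its meridional edge to a meridional edge, it carries the remaining vertex of each clasping triangle (the apex opposite the meridional edge) to the apex of the image. The goal is to promote these facts to the statement that $f$ respects the decomposition of $\widetilde T$ into horizontal strips.

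First I would pin down the local structure along a single column. Each meridional line $x=c$ is tiled by meridional edges, and each such edge is the long edge of a single clasping triangle sitting in one band $\{m\le y\le m+1\}$; thus every column carries a vertical stack of clasping triangles, one per band, and $f$ carries this stack to the analogous stack along $f(\{x=c\})$, matching band to band. The key structural claim to record is that each horizontal strip is precisely a maximal run of non-clasping triangles lying in a single band and terminating at its two ends in the meridional edges of two clasping triangles. Granting that $f$ preserves the horizontal band structure, this description is assembled entirely from clasping triangles and meridional edges, all of which $f$ preserves, so $f$ would send each horizontal strip onto a horizontal strip.

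The main obstacle is exactly this proviso: permuting the columns and the clasping triangles does not by itself prevent \emph{shearing}. A homeomorphism preserving the vertical foliation can in principle send the line $y=m$ to a non-horizontal path and mix triangles from different bands, and simply deleting the clasping triangles fails to disconnect adjacent bands, which remain joined along the horizontal edges on $y=m$. The crux is therefore to show that $f$ sends horizontal lines to horizontal lines. Here I would use that, besides its meridional edge, each clasping triangle has a horizontal edge joining its apex to one endpoint of that meridional edge; since $f$ preserves clasping triangles, their meridional edges, and hence their apices, it preserves all such horizontal edges. These furnish an initial stock of horizontal edges, and tracking how a clasping triangle's remaining (diagonal) edge attaches to the non-clasping triangles of its band relates the vertex-heights along one column to those along the neighbouring column. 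Propagating this bookkeeping across the horizontal strips that join consecutive columns forces the height labelling to be consistent from column to column, ruling out shear, so that $f$ permutes the horizontal bands.

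With $f$ now known to permute horizontal bands while permuting clasping triangles, the conclusion is immediate: each horizontal strip is the run of non-clasping triangles in a fixed band, capped on either side by clasping triangles, and every ingredient of this description is preserved by $f$. Hence $f$ maps each horizontal strip onto a horizontal strip, as desired.
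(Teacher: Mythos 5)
Your proposal correctly isolates shearing as the crux of the problem, but the mechanism you offer to rule it out rests on a false structural claim about $\widetilde{T}$. A meridional edge wraps once around the meridian of the cusp, and the meridional deck transformation is $(x,y)\mapsto(x,y+2)$; so in the plane a meridional edge is a vertical segment of length $2$, shared by \emph{two} clasping triangles (one on each side of the meridional line), each of which spans \emph{two} bands and has its apex at the \emph{middle} height of that edge (e.g.\ vertices $(0,0)$, $(0,2)$, apex at height $1$ --- this is the triangle $\bigtriangleup_{1,0,0}$ of Lemma \ref{SH_lemma}). Consequently both non-meridional edges of a clasping triangle are diagonal: clasping triangles contain no horizontal edges at all. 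Your ``initial stock of horizontal edges'' is therefore empty, and with it the anti-shearing argument collapses. (Even if such an edge existed, the inference would be incomplete: knowing that $f$ preserves clasping triangles, meridional edges, and apices only tells you that the two non-meridional edges of $C$ map to the two non-meridional edges of $f(C)$; nothing yet prevents $f$ from swapping them.) The same misreading infects your earlier setup, where clasping triangles are said to stack ``one per band'' and be matched ``band to band.'' The horizontal edges that do exist in $\widetilde{T}$ belong to the non-clasping triangles along the lines $y=m$, and characterizing them combinatorially is essentially as hard as the corollary itself.

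The second gap is that your propagation step --- ``tracking \dots relates the vertex-heights along one column to those along the neighbouring column,'' and ``propagating this bookkeeping \dots forces the height labelling to be consistent'' --- is an assertion of exactly the statement to be proved, with no mechanism supplied. The paper closes this gap with a short counting argument, which is the idea missing from your proposal: let $\gamma$ be the transversal path across a horizontal strip $H$ from its edge of adjacency with a clasping triangle $C$ to its edge of adjacency with the clasping triangle $C'$ at the other end; it crosses exactly $c-2$ triangles. By Lemma \ref{SH_lemma}, $f(\gamma)$ runs between edges of clasping triangles adjacent to two distinct meridional lines and crosses only non-clasping triangles, hence lies in a vertical stack of horizontal strips; but each ``jump'' of the path from one strip to an adjacent one forces it to cross at least one extra triangle. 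Since $f$ is simplicial, $f(\gamma)$ also crosses exactly $c-2$ triangles, so no jump occurs and $f(H)$ is a single horizontal strip. Some quantitative argument of this kind (or an honest combinatorial characterization of horizontal edges) is needed; as written, your proof does not contain one.
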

\begin{proof}
Let $C$ be the clasping triangle in the first quadrant of $\R^2$ with a vertex at the origin. $C$ is adjacent to two horizontal strips; let $H$ be the one adjacent to the $x$-axis, and let $C'$ be the other clasping triangle adjacent to $H$. Let $\gamma$ be the path directly across $H$ connecting the midpoints of the edges of adjacency with $C$ and $C'$. Consider the image of $\gamma$ under a simplicial automorphism $f:\widetilde{T}\to \widetilde{T}$. Since $\gamma$ crosses exactly $c-2$ triangles, so must $f(\gamma)$. By Lemma \ref{SH_lemma}, $f$ maps $e$ and $e'$ to edges of clasping triangles, which are adjacent to distinct meridional lines since $C$ and $C'$ are, and $f$ maps triangles crossed by $\gamma$ to non-clasping triangles, so $\gamma$ must be mapped into some number of vertically stacked horizontal strips. Since $\gamma$ crosses all triangles transversely, if $f(\gamma)$ jumps from one horizontal strip to another the number of triangles it crosses must be one more that if it did not make the jump, as shown in Figure \ref{fig:HStoHS}. Hence $f(\gamma)$ must be contained in one horizontal strip, the image of $H$.
\end{proof}

\begin{figure}[h]
    \centering
    \includegraphics[scale=.9]{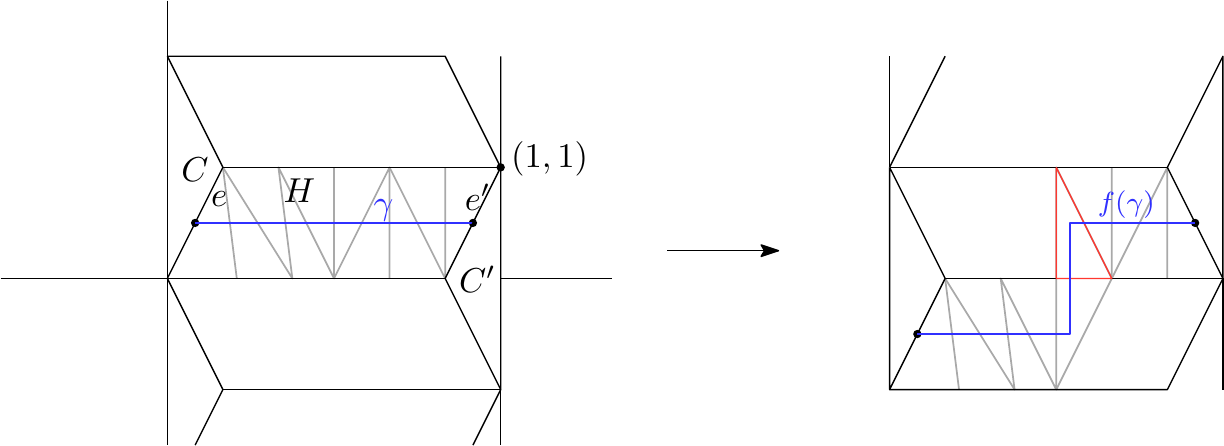}
    \caption{If $H$ maps into more than one horizontal strip, then $f(\gamma)$ traverses more than $c-2$ triangles, which is impossible.}
    \label{fig:HStoHS}
\end{figure}

Recall that in our algorithmic construction of $\widetilde{T}$, we chose coordinates so that the rectangle $D'$ shown in Figure \ref{fig:T_alg1} is identified with $[0,1]\times [0,1]\subset \R^2$. We have the following theorem:

\begin{thm}\label{SH_implies_iso}
If $\Omega\notin\{RL, R^2L^2, RLR\}$, then $\mathrm{Aut}_{ev}(\widetilde{T})$ is generated by the deck transformations and a subset of the  following:
\begin{itemize}
\item \textbf{Orientation-Preserving}: the rotations $\rho_1$, $\rho_2$, and $\rho_3$ about $(1,1)$, $(2,1)$, and $(\frac{1}{2},1)$, respectively, by an angle $\pi$.
\item \textbf{Orientation Reversing}: the glide reflection $g$ given by the reflection across $x=\frac{1}{2}$ composed with $(x,y)\mapsto (x,y+1)$ .
\end{itemize} 
Furthermore, we always have $\rho_1,\rho_2\in \mathrm{Aut}_{ev}^+(\widetilde{T})$, and $\rho_3\in \mathrm{Aut}_{ev}^+(\widetilde{T})$ (resp. $g\in \mathrm{Aut}_{ev}(\widetilde{T})$) if and only if $\rho_3$ (resp. $g$) is a simplicial automorphism.
\end{thm}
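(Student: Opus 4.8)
The plan is to pin down the image of one distinguished triangle under an arbitrary $f\in\mathrm{Aut}_{ev}(\widetilde{T})$, and then argue that $f$ is determined up to a deck transformation by this image, so that the entire group is generated by deck transformations together with the short explicit list of rigid motions. By Lemma \ref{SH_lemma} and its Corollary, any $f$ preserves clasping triangles, meridional edges, and horizontal strips; this is the structural backbone of the whole argument. Fix the clasping triangle $C$ in the first quadrant with a vertex at the origin (the triangle $\bigtriangleup_{1,0,0}$, in the labelling of Figure \ref{fig:T_alg1}). First I would observe that, after composing $f$ with a deck transformation, we may assume $f(C)$ is again a clasping triangle in a fixed bounded region near the origin; since the deck group acts with compact quotient and clasping triangles form a single deck-orbit (up to the two types coming from $S_1$ and $S_c$), only finitely many possibilities for $f(C)$ remain.

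Next I would enumerate those finitely many possibilities. The key rigidity input is part (d) of Lemma \ref{sum_lem}, specialized through the hypothesis $\Omega\notin\{RL, R^2L^2, RLR\}$, so that the Lemma \ref{SH_lemma} conclusions hold and clasping triangles carry a recognizable valence signature. A simplicial automorphism preserving edge valence must send the unique odd-valence vertex of $C$ (the vertex not on a meridional edge) to the corresponding distinguished vertex of $f(C)$, and must respect the orientation-or-not of the strip structure. Matching these constraints, $f(C)$ can only be the image of $C$ under one of: the identity, the half-turn $\rho_1$ about $(1,1)$, the half-turn $\rho_2$ about $(2,1)$, the half-turn $\rho_3$ about $(\tfrac12,1)$, or the glide reflection $g$ — each composed with a deck transformation. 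I would verify that $\rho_1$ and $\rho_2$ always carry $C$ to a legitimate clasping triangle with matching valence data (this is where the symmetry of the construction in part (a) of Lemma \ref{sum_lem} does the work), while $\rho_3$ and $g$ do so precisely when the word $\Omega$ has the extra symmetry making them genuine simplicial automorphisms — hence the ``if and only if'' clause.

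Finally I would upgrade ``$f$ and one of the listed maps agree on $C$'' to ``they agree everywhere.'' This is the standard rigidity step for a simplicial triangulation: once $f$ and a candidate generator $h$ agree on the clasping triangle $C$, they agree on every triangle sharing an edge with $C$ (a simplicial, edge-valence-preserving, strip-preserving map is determined on a triangle by its restriction to an adjacent triangle), and then by connectedness of $\widetilde{T}$ and propagation across horizontal strips and meridional lines — using the Corollary to control the strip structure and Lemma \ref{SH_lemma} to control the claspings — they agree on all of $\widetilde{T}$. Therefore $h^{-1}f$ is a deck transformation, which gives the stated generating set.

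I expect the main obstacle to be the finite case-check in the second paragraph: verifying that the valence signatures of clasping triangles given by Lemma \ref{sum_lem}(d) really do eliminate all candidate images of $C$ except the five listed, and correctly detecting exactly when $\rho_3$ and $g$ survive. The boundary words $RL^m$, $R^kL^m$, and $RL^mR^k$ treated separately in Cases 2 and 3 of Lemma \ref{SH_lemma} are the delicate ones, since there the valence multiplicities can collide (e.g.\ $2m+2=8$), so the orientation and strip-adjacency data, rather than valence alone, must be used to rule out spurious candidates.
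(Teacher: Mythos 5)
Your overall skeleton --- pin down the image of one clasping triangle $C$, then propagate by simpliciality (a simplicial automorphism of a triangulated plane is indeed determined by its restriction, vertex by vertex, to a single triangle) --- is a legitimate alternative to the paper's route, which instead first forces $f$ to be a Euclidean isometry of $\R^2$ (using that $f$ preserves the edge set $E$ of clasping triangles and horizontal strips, whose metric structure is rigid) and then enumerates all isometries preserving $E$. However, your middle step has a genuine gap: the claim that your constraints (odd-valence vertex of $C$ goes to the distinguished vertex of $f(C)$, plus ``respecting the orientation-or-not of the strip structure'') leave only the identity, $\rho_1$, $\rho_2$, $\rho_3$, and $g$ up to deck transformations is false. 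Two candidates survive every constraint you impose and are absent from your list: the reflection $r_y$ across $y=1$, which fixes $C$ setwise while swapping the two endpoints of its meridional edge, and the half-turn $\rho_4$ about $(\frac{1}{2},\frac{1}{2})$, which carries $C$ apex-to-apex onto a left-pointing clasping triangle. The case of $r_y$ is the dangerous one: by construction of $\widetilde{T}$ (the region $D$ is the mirror of $D'$), $r_y$ \emph{is} a genuine simplicial automorphism, so it preserves all vertex valences, preserves clasping triangles and horizontal strips, and matches odd-valence vertices --- yet $r_y\notin\mathrm{Aut}_{ev}(\widetilde{T})$, because what it fails to preserve is \emph{edge} valence, which is defined through the edge/vertex correspondence of Lemma \ref{sum_lem}(c) and is not an intrinsic invariant of the two-dimensional simplicial structure. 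Your proposal conflates vertex-valence signatures with edge-valence preservation, so it cannot detect this failure, and since pointwise agreement on $C$ is what your propagation step needs, the vertex-swapped case $f|_C=r_y|_C$ must be explicitly excluded.

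Ruling out these two maps is the bulk of the paper's proof and requires ingredients your outline does not contain: $r_y$ is eliminated by an explicit edge-valence computation adjacent to a clasping triangle, split into the four cases $\alpha_1\ge 3$, $\alpha_1=2$, $\alpha_1=1\neq\alpha_2$, $\alpha_1=1=\alpha_2$ (Figure \ref{fig:or_hs}); and $\rho_4$ is then eliminated by the algebraic trick $g=r_y\circ\rho_4$: if $\rho_4\in\mathrm{Aut}_{ev}(\widetilde{T})$, then $g$ is simplicial, hence $g\in\mathrm{Aut}_{ev}(\widetilde{T})$, forcing $r_y=g\circ\rho_4^{-1}\in\mathrm{Aut}_{ev}(\widetilde{T})$, a contradiction. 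Relatedly, your justification of the ``furthermore'' clause asserts that $\rho_3$ and $g$ work exactly when $\Omega$ has extra symmetry, but the actual content --- that \emph{simplicial} implies \emph{edge-valence-preserving} for these particular maps --- rests on the observation that $\rho_1,\rho_2,\rho_3,g$ each preserve the correspondence rule of Lemma \ref{sum_lem}(c) because they flip the parity of the rows while simultaneously exchanging left and right; this is precisely what fails for $r_y$, which flips parity without exchanging left and right. Without these pieces your case analysis lets $r_y$ and $\rho_4$ slip through, and the generation statement you are trying to prove would then be false.
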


\begin{proof}
Let $f\in \mathrm{Aut}_{ev}(\widetilde{T})$, and let $E$ be the union of all edges of horizontal strips and clasping triangles, as shown in Figure \ref{fig:claspsandstrips}. Since $f$ maps clasping triangles to clasping triangles, and horizontal strips to horizontal strips, it must map $E$ to itself. Since the simplicial structure of the triangulation within each horizontal strip must be preserved, and since we may assume all clasping triangles are congruent and triangles within each strip are uniformly sized, $f$ is forced to be a Euclidean isometry of $\R^2$. Let $\rho_4$ be the rotation by $\pi$ about the point $(\frac{1}{2},\frac{1}{2})$, and let $r_y$ be the reflection about the line $y=1$. We first consider the possible Euclidean isometries preserving $E$:

\hangindent=.5cm $\bullet$\,\,\textbf{translations}: translations must preserve the integer lattice, so modulo deck transformations they have the form $\tau_{i,j}:(x,y)\mapsto (x+i,y+j)$, $i\in\{0,1,2,3\}$, $j\in \{0,1\}$. Since $\tau_{0,1}$, and $\tau_{2,1}$ do not preserve $E$, and $\tau_{0,0}$ is trivial, we are left with
\[
\tau_{1,0}=\rho_1\circ \rho_3; \quad \tau_{2,0}=\rho_2\circ \rho_1; \quad \tau_{3,0}=\rho_2\circ \rho_3; \quad \tau_{1,1}=\rho_1\circ\rho_4; \quad \tau_{3,1}=\rho_2\circ\rho_4
\]
and their inverses.

$\bullet$\,\,\textbf{rotations}: since meridional lines and integer lattice points must be preserved, any rotation must be by an angle $\pi$ about a point $(\frac{k}{2},\frac{m}{2})$, $k,m\in \Z$. The rotations about $(1,\frac{1}{2})$ and $(2,\frac{1}{2})$ do not preserve clasping triangles, so modulo deck transformations we are left with $\rho_1, \rho_2, \rho_3, \rho_4$, and the rotations
\[
\rho_4\circ \rho_2\circ \rho_1; \quad \rho_3\circ \rho_2\circ \rho_1
\]
about $(\frac{3}{2},\frac{1}{2})$ and $(\frac{3}{2},1)$, respectively.

$\bullet$\,\,\textbf{reflections}: reflections must preserve meridional lines and clasping triangles, so possible lines of reflection are $x=\frac{k}{2}$ or $y=k$, $k\in \Z$. Modulo deck transformations, we get the reflection $r_y$ across $y=1$, and the reflections $r_i$ across the lines $x=i$, $i\in \{\frac{1}{2}, 1, \frac{3}{2},2\}$. We have
\[
r_1=r_y\circ \rho_1; \quad r_2=r_y\circ \rho_2; \quad r_{\frac{1}{2}}=r_y\circ \rho_3; \quad r_{\frac{3}{2}}=r_{\frac{1}{2}}\circ \rho_2\circ \rho_1
\]

$\bullet$\,\,\textbf{glide reflections}: Since simplicial automorphisms preserve merdional lines and clasping triangles, the reflection component of the glide reflection must be across a line $x=\frac{k}{2}$ or $y=k$, $k\in \Z$. If the reflection is across $x=k \in \Z$, then the translation must be $(x,y)\mapsto (x,y+2n)$, $n\in \Z$, so modulo deck transformations this is a pure reflection, and can be ruled out. Thus we are left with the glide reflection $g=\tau_{0,1}\circ r_{\frac{1}{2}}$, given by the reflection across $x=\frac{1}{2}$ followed by the translation $(x,y)\mapsto (x,y+1)$, and the compositions
\[
r_y\circ \tau_{1,0}=r_y\circ \rho_1\circ\rho_3; \quad r_y\circ \tau_{2,0}=r_y\circ \rho_2\circ \rho_1; \quad r_{\frac{3}{2}}\circ \tau_{0,1}=g\circ \rho_2\circ \rho_1,
\]
all others being obtained by composing with deck transformations. 

\text{}\\[0 pt]
\hangindent=0cm
We show that $r_y\notin \mathrm{Aut}_{ev}(\widetilde{T})$ by considering edge valences near a clasping triangle. Using the edge/vertex correspondences from Figure \ref{fig:EV_cor}, we obtain the four pictures in Figure \ref{fig:or_hs}, which correspond to the cases $\alpha_1\ge 3$, $\alpha_1=2$, $\alpha_1=1\neq \alpha_2$, and $\alpha_1=1=\alpha_2$, respectively (note that $\Omega$ non-arithmetic implies $\Omega\notin \{ RL, RLR, R^2L^2\}$). For the first three pictures it is clear that $r_y$ does not preserve edge valence. For the last picture, if $r_y\in \mathrm{Aut}_{ev}(\widetilde{T})$ then $c=d=8$, so that $\alpha_3=2$, which implies $8=d=a=4$, a contradiction. Hence $r_y\notin \mathrm{Aut}_{ev}(\widetilde{T})$. 

In order to rule out $\rho_4$ and the compositions above involving $\rho_4$ and $r_y$, we will first need to establish the last assertion of the theorem, namely that we always have $\rho_1,\rho_2\in \mathrm{Aut}_{ev}(\widetilde{T})$, and  $\rho_3$ and $g$ are in $\mathrm{Aut}_{ev}(\widetilde{T})$ if and only if they are simplicial automorphisms of $\widetilde{T}$. To see this first note that $\rho_1$ and $\rho_2$ are always simplicial automorphisms (by construction of $\widetilde{T}$). Thus we need only show that if any of $g$, $\rho_1$, $\rho_2$, or $\rho_3$ is a simplicial homeomorphism, then it is in $\mathrm{Aut}_{ev}(\widetilde{T})$. But this follows from the fact that each of $g$, $\rho_1$, $\rho_2$, and $\rho_3$ preserve the edge/vertex correspondence given in Lemma \ref{sum_lem}(c) (shown graphically in Figure \ref{fig:EV_cor}). In particular, each of these maps switches the parity of $k$ in part (c) of the Lemma, but also exchanges right and left. Thus, if $g$ is simplicial, it preserves vertex valence, and since it also preserves edge/vertex correspondence, it must preserve edge valence, i.e., $g\in\mathrm{Aut}_{ev}(\widetilde{T})$. The same holds for $\rho_1,\rho_2$, and $\rho_3$, so the assertion is proved.

Now, suppose that $\rho_4\in \mathrm{Aut}_{ev}(\widetilde{T})$. First, observe that $g=\tau_{0,1}\circ r_{\frac{1}{2}}=r_{\frac{1}{2}}\circ\tau_{0,1}= (\rho_3\circ r_y)\circ (\rho_3\circ \rho_4)=(\rho_3\circ r_y \circ \rho_3)\circ \rho_4=r_y\circ \rho_4$. Since $r_y$ is always a simplicial automorphism (by construction of $\widetilde{T}$), $\rho_4\in \mathrm{Aut}_{ev}(\widetilde{T})$ implies that $g$ is a simplicial automorphism. 
Thus by the above paragraph, $g\in \mathrm{Aut}_{ev}(\widetilde{T})$. But $g, \rho_4 \in \mathrm{Aut}_{ev}(\widetilde{T})$ implies that $r_y\in \mathrm{Aut}_{ev}(\widetilde{T})$, a contradiction.

Thus we can rule out the compositions $\tau_{1,1}, \tau_{3,1}, \rho_4\circ \rho_2\circ \rho_1, r_1, r_2$, and $r_y\circ \tau_{2,0}$. For $r_\frac{1}{2}$ and $r_y\circ \tau_{1,0}$, since $r_y$ is always a simplicial homeomorphism, the composition is simplicial if and only if $\rho_3$ is. But then by the above observation it follows that $\rho_3$ preserves edge valence, so the composition cannot preserve edge valence (becuase $r_y$ does not). Last, $r_{\frac{3}{2}}$ can now be ruled out since $r_{\frac{1}{2}}\notin \mathrm{Aut}_{ev}(\widetilde{T})$.

Since the only compositions we have not ruled out are generated by $\rho_1, \rho_2, \rho_3$, and $g$, and since compositions involving $\rho_3$ (resp. $g$) are in $\mathrm{Aut}_{ev}(\widetilde{T})$ if and only if $\rho_3$ (resp. $g$) is, the result follows.
\end{proof}

\begin{figure}
        \centering
        \begin{subfigure}[b]{0.28\textwidth}
                \includegraphics[scale=.9]{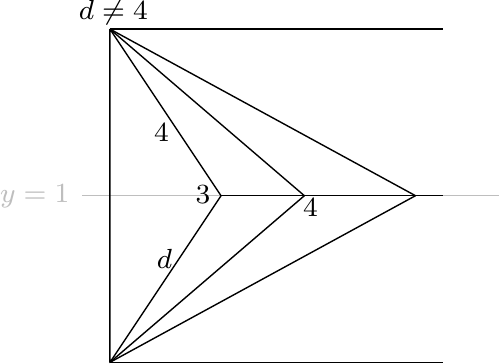}
                \caption{}
                \label{fig:or_hs_1}
        \end{subfigure}
        \begin{subfigure}[b]{0.22\textwidth}
                \includegraphics[scale=.9]{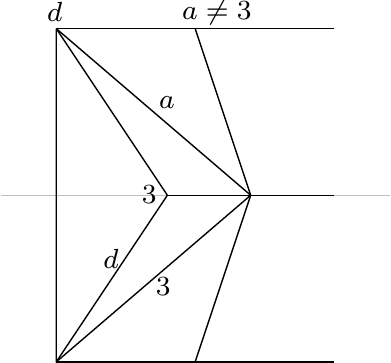}
                \caption{}
                \label{fig:or_hs_2}
        \end{subfigure}
         \begin{subfigure}[b]{0.22\textwidth}
                \includegraphics[scale=.9]{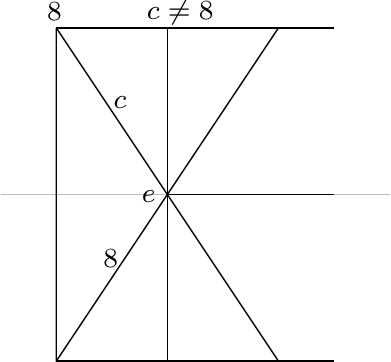}
                \caption{}
                \label{fig:or_hs_3}
        \end{subfigure}
         \begin{subfigure}[b]{0.22\textwidth}
                \includegraphics[scale=.9]{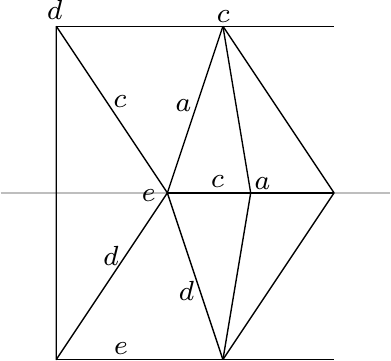}
                \caption{}
                \label{fig:or_hs_4}
        \end{subfigure}
        \caption{Reflecting by $r_y$ about $y=1$, for the cases $\alpha_1\ge 3$, $\alpha_1=2$, $\alpha_1=1\ne \alpha_2$, and $\alpha_1=1=\alpha_2$, from left to right respectively.}
        \label{fig:or_hs}
\end{figure}

\begin{remark} In Corollary \ref{SH_implies_iso} we have described a set \emph{containing} the generators of  $\mathrm{Aut}_{ev}(\widetilde{T})$, but we do not know whether they are all in fact generators. We will easily obtain in Section \ref{sec:hiddensym} a complete description of this group.
\end{remark}

\begin{figure}[h]
    \centering
    \includegraphics[scale=.8]{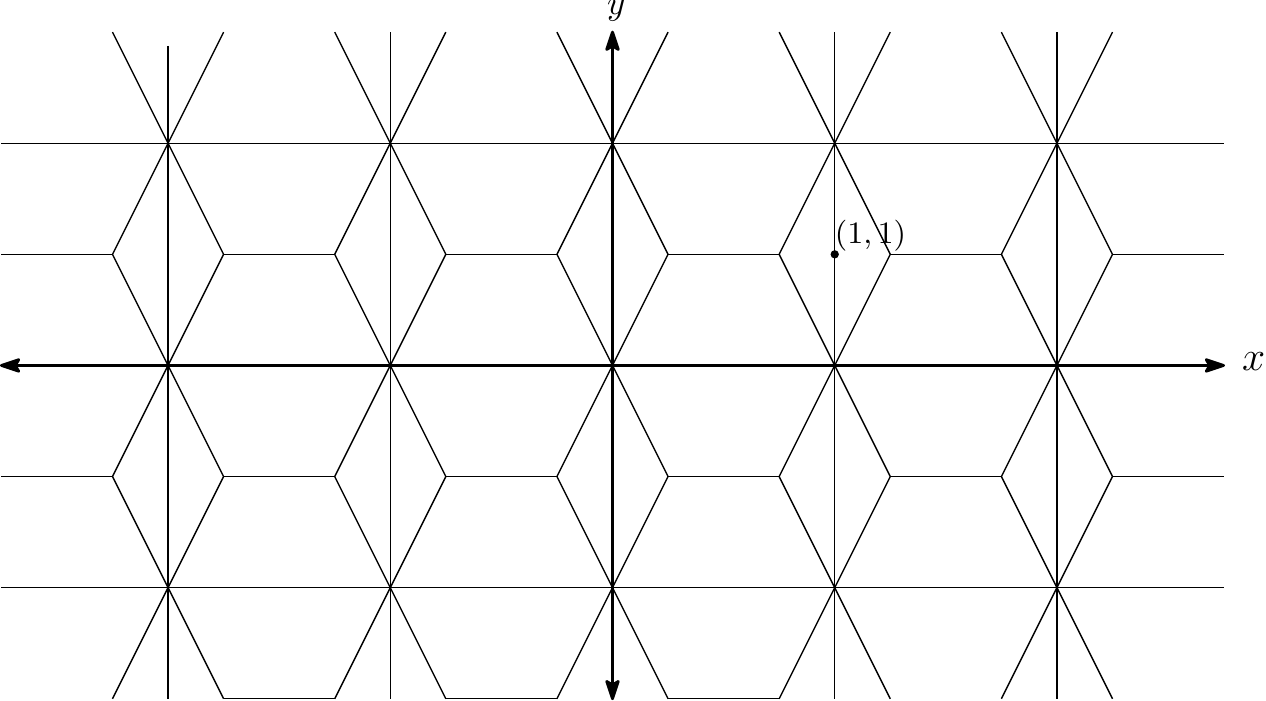}
    \caption{The union $E$ of all edges of horizontal strips and clasping triangles, in the case where $\Omega_c=R$. If $\Omega_c=L$ then the clasping triangles adjacent to line $y=k$, $k$ odd, will be shifted vertically by $1$, and horizontal strips will be parallelagrams.}
    \label{fig:claspsandstrips}
\end{figure}


\section{Symmetries of $2$-bridge link complements}
\label{sec:sym}

Let $M = \mathbb{S}^{3} \setminus K(\Omega)$, and let $Sym(M)$ denote the symmetries of $M$, i.e., $Sym(M)$ is the group of self-homeomorphisms of $M$ up to isotopy. Here, we describe the action of $Sym(M)$ on the triangulation $\widetilde{T}$.  First, Theorem \ref{thm:Symclass} gives a classification of the symmetries of $M$ in terms of the word $\Omega$. This theorem comes from combining Theorem II.3.2 and Lemma II.3.3 in \cite{SaWe} and translating from $\left[a_{1}, a_{2}, \dots, a_{n}\right]$ to the word $\Omega$ given by the following dictionary: $a_{1} = \alpha_{1}+1$, $a_{i} = \alpha_{i}$ for $i \leq 2 \leq n-1$, and $a_{n} = \alpha_{n}+1$. In \cite{SaWe}, these symmetries are called automorphisms of the triangulation $\mathcal{T}$ of $M$ described in Section \ref{sec:CuspT}. Since by \cite{Gu} this triangulation is now known to coincide with the canonical triangulation of $M$, we know these automorphisms actually correspond to all of the symmetries of $M$. 

We let $Sym^{+}(M)$ denote the subgroup of $Sym(M)$ consisting of orientation-preserving symmetries. We say that $\Omega$ is \textit{palindromic} if $\alpha_{i} = \alpha_{n-i+1}$ for all $1 \leq i \leq n$. 

\begin{thm}[\cite{SaWe},\cite{Gu}]
\label{thm:Symclass}
Let $M = \mathbb{S}^{3} \setminus K(\Omega)$ be any hyperbolic $2$-bridge link complement. Then $Sym(M) = Sym^{+}(M) \cong Z_{2} \oplus Z_{2}$ if and only if $\Omega$ is not palindromic. When $\Omega$ is palindromic, then we have the following possibilities:
\begin{itemize}
\item If $n$ is even, then $Sym(M) \cong D_{4}$ and $Sym^{+}(M)  \cong Z_{2} \oplus Z_{2}$.
\item If $n$ is odd and $\alpha_{\frac{n+1}{2}}$ is odd, then $Sym(M) = Sym^{+}(M) \cong D_{4}$.
\item If $n$ is odd and $\alpha_{\frac{n+1}{2}}$ is even, then $Sym(M) = Sym^{+}(M)  \cong Z_{2} \oplus Z_{2} \oplus Z_{2}$.
\end{itemize}
\end{thm}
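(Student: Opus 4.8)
The plan is to identify $Sym(M)$ with the group of simplicial automorphisms of the canonical triangulation $\mathcal{T}$ of $M$, exhibit its generators explicitly from the $S\times I$ construction, and then count. First I would invoke Mostow rigidity together with the fact that every self-homeomorphism of a cusped hyperbolic $3$-manifold is isotopic to an isometry; since by Gu\'eritaud's theorem $\mathcal{T}$ is canonical, every such isometry permutes the ideal tetrahedra and is therefore a simplicial automorphism of $\mathcal{T}$. Thus $Sym(M)$ is exactly the automorphism group of $\mathcal{T}$. I would work with these $3$-dimensional automorphisms directly rather than reading symmetries off of $\mathrm{Aut}_{ev}(\widetilde{T})$, since that group can be strictly larger (it sees hidden symmetries in the arithmetic cases, and its non-triviality is what the rest of the paper is trying to control); the induced action on the cusp, matched against the candidate list $\rho_1,\rho_2,\rho_3,g$ of Theorem \ref{SH_implies_iso}, then serves only as a cross-check on which maps actually descend.

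Next I would locate the symmetries present for every $\Omega$. The alternating $4$-string braid $S\times I$ carries an obvious order-$2$ symmetry of the $4$-punctured sphere $S$ that is respected along the entire product and is compatible with both claspings; combined with the involution exchanging the two strands this yields a pair of commuting orientation-preserving involutions. These descend to $M$ for every word, inducing (at the cusp) the always-present rotations $\rho_1$ and $\rho_2$, so one always has $Z_2\oplus Z_2\le Sym^+(M)$. Proving that nothing further exists when $\Omega$ is not palindromic then gives the first case, and reduces the problem to analyzing the one extra symmetry that can appear.

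That extra symmetry is the \emph{flip} reversing the product direction, exchanging $S_1$ with $S_c$. Reversing the order of the syllables carries the construction to the construction for the reversed word, so the flip is an automorphism of $\mathcal{T}$ exactly when $\Omega$ is palindromic, i.e. $\alpha_i=\alpha_{n-i+1}$ for all $i$. To determine its type I would use the normalization that words begin with $R$: the reversal of $R^{\alpha_1}L^{\alpha_2}R^{\alpha_3}\cdots$ begins with $R$ already when $n$ is odd, but begins with $L$ when $n$ is even, so restoring the standard form in the even case forces an application of the $R\leftrightarrow L$ swap, which is precisely the mirror, hence orientation-reversing. This explains why the flip is realized by the orientation-reversing glide reflection $g$ when $n$ is even (so $Sym(M)\supsetneq Sym^+(M)$) and by the orientation-preserving rotation $\rho_3$ when $n$ is odd (so $Sym(M)=Sym^+(M)$).

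The main obstacle is the final group-theoretic bookkeeping, specifically distinguishing the two order-$8$ possibilities for odd $n$: adjoining the orientation-preserving flip $\rho_3$ to $\langle\rho_1,\rho_2\rangle\cong Z_2\oplus Z_2$ yields either the abelian group $Z_2\oplus Z_2\oplus Z_2$ or the dihedral group $D_4$, and the two cases differ only in whether $\rho_3$ commutes with the base involutions. I would settle this by computing the relevant conjugation using the fixed-point data of the central rotation, which is governed by the parity of the middle syllable $\alpha_{\frac{n+1}{2}}$: when $\alpha_{\frac{n+1}{2}}$ is even the central twist region has a symmetrically placed center and $\rho_3$ commutes with $\rho_1,\rho_2$, giving $Z_2\oplus Z_2\oplus Z_2$, whereas when $\alpha_{\frac{n+1}{2}}$ is odd the center is offset, the flip fails to commute, and one gets $D_4$; in the even-$n$ case $\langle\rho_1,\rho_2,g\rangle\cong D_4$ with orientation-preserving part $\langle\rho_1,\rho_2\rangle\cong Z_2\oplus Z_2$. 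The delicate points throughout are this parity/commutation tracking near the central triangles and the direct verification of the handful of exceptional words $\{RL, R^2L^2, RLR\}$; alternatively, one may simply invoke the equivalent statement of Sakuma--Weeks (Theorem II.3.2 and Lemma II.3.3 of \cite{SaWe}) and translate via $a_1=\alpha_1+1$, $a_i=\alpha_i$ for $2\le i\le n-1$, and $a_n=\alpha_n+1$.
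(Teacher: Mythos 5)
The paper does not actually prove this theorem: its ``proof'' is the citation you give in your final sentence, namely Theorem II.3.2 and Lemma II.3.3 of \cite{SaWe} combined with Gu\'eritaud's result \cite{Gu} that $\mathcal{T}$ is canonical (so that the Sakuma--Weeks automorphism group of $\mathcal{T}$ really is all of $Sym(M)$), plus the dictionary $a_1=\alpha_1+1$, $a_i=\alpha_i$, $a_n=\alpha_n+1$. So your fallback coincides exactly with the paper's route, and your opening step (Mostow--Prasad rigidity, homotopy-implies-isotopy, and canonicity give $Sym(M)\cong \mathrm{Aut}(\mathcal{T})$) is the same identification the paper relies on. Your primary, from-scratch route is genuinely different, but it has two gaps that keep it from being a proof.

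First, completeness: you exhibit $\sigma_1,\sigma_2$ and the flip, but you never say how to show that $\mathrm{Aut}(\mathcal{T})$ contains nothing else. The sentence ``proving that nothing further exists when $\Omega$ is not palindromic then gives the first case'' is exactly the hard part -- the content of the Sakuma--Weeks argument is that an arbitrary simplicial automorphism of $\mathcal{T}$ must respect the layered structure $S_1,\dots,S_c$ (preserving or reversing it), and you put nothing in place of that. Having (reasonably) declined to use $\mathrm{Aut}_{ev}(\widetilde{T})$, you have no mechanism bounding $\mathrm{Aut}(\mathcal{T})$ from above, so even the $Z_{2}\oplus Z_{2}$ case is not established. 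Second, the $D_4$ versus $Z_{2}\oplus Z_{2}\oplus Z_{2}$ dichotomy: your fixed-point/conjugation computation does work for knots -- conjugating the rotation about $(1,1)$ by the rotation about $(\frac{1}{2},1)$ gives the rotation about $(0,1)$, which is $\sigma_2$ modulo the deck translation $(4,0)$, whence $D_4$ -- but it fails for two-component links. There the per-plane deck group is generated by $(2,0)$ and $(0,2)$, and the same computation makes all the listed generators commute modulo deck transformations, returning $Z_{2}\oplus Z_{2}\oplus Z_{2}$ regardless of the parity of $\alpha_{\frac{n+1}{2}}$. Yet $\Omega=R^3LR^3$ (the link $b(24,5)$) is a two-component, palindromic, odd-$n$ word with odd middle syllable, for which the theorem asserts $D_4$. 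The point is that with two cusps the group structure is not visible from rotation centers modulo deck translations: one must track whether $\sigma_3$ preserves or exchanges the components, and which (possibly different) deck translations it carries in the two planes $\widetilde{T}_1,\widetilde{T}_2$ -- this is precisely why Proposition \ref{syms} is only stated ``up to deck transformations'' and is never used in the paper to recover the group structure. Your ``symmetrically placed versus offset center'' heuristic does not engage with this, and applied naively it gives the wrong group in the dihedral link cases.
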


Note that the $2$-bridge link complements with orientation-reversing symmetries are exactly those with $n$ even and $\Omega$ palindromic.

We would like to understand how these symmetries act on $\widetilde{T}$. In order to accomplish this, we will first show that  $Sym(M) = Sym(\mathbb{S}^{3}, K(\Omega))$. Here, $Sym(\mathbb{S}^{3}, K(\Omega))$ denotes the symmetries of $(\mathbb{S}^{3}, K(\Omega))$, that is, the group of self-homeomorphisms of the pair $(\mathbb{S}^{3}, K(\Omega))$ up to isotopy. Mostow--Prasad rigidity implies that $Sym(M) \supseteq Sym(\mathbb{S}^{3}, K)$ for any hyperbolic link $K$. In fact, if $K$ is a hyperbolic knot, then $Sym(M) = Sym(\mathbb{S}^{3}, K)$ by the Knot Complement Theorem \cite{GL}.  However, here we do not rely on the Knot Complement Theorem, and in addition, we prove the desired equality for both hyperbolic $2$-bridge knots and hyperbolic $2$-bridge links with two components. Once we have established this correspondence, we can determine how these symmetries act on the cusp triangulation, $T$. From here, we just lift this action of $Sym(M)$ on $T$ to the universal cover $\mathbb{R}^{2}$, to get the corresponding action on $\widetilde{T}$. 

The following proposition is certainly known by the experts in the field. However, the authors were unable to find a reference in the literature. 

\begin{prop}
	\label{prop:sym}
	Let $M = \mathbb{S}^{3} \setminus K(\Omega)$ be a hyperbolic $2$-bridge link complement. Then $Sym(M) = Sym(\mathbb{S}^{3}, K(\Omega))$.
\end{prop}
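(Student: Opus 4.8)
The plan is to prove the nontrivial inclusion $Sym(M)\subseteq Sym(\s^3,K(\Omega))$, since Mostow--Prasad rigidity already gives $Sym(M)\supseteq Sym(\s^3,K(\Omega))$ as noted in the excerpt. The strategy is to show that every self-homeomorphism of $M$ (up to isotopy) extends to a self-homeomorphism of the pair $(\s^3,K)$, and the key geometric input will be that any symmetry of $M$ must send meridians to meridians, so that it extends across the Dehn fillings that recover $\s^3$ from $M$. Concretely, I would realize each symmetry of $M$ as an isometry of the hyperbolic structure (via Mostow--Prasad), hence as a simplicial automorphism of the canonical triangulation $\mathcal{T}$, and then track its action on the cusp(s) via the induced automorphism of $\widetilde{T}$ analyzed in Section~\ref{sec:CuspT}.

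The main step is the \emph{meridian-preserving} claim. The curve on each cusp torus that bounds a disk in $\s^3$ (the meridian of $K$) must be identified intrinsically from the geometry so that any isometry carries it to another such slope; once meridians go to meridians, each isometry of $M$ extends over the trivial Dehn fillings that produce $(\s^3,K)$, giving the desired element of $Sym(\s^3,K)$. Here I would use Theorem~\ref{SH_implies_iso}: the generators of $\mathrm{Aut}_{ev}(\widetilde{T})$ are the deck transformations together with $\rho_1,\rho_2,\rho_3$, and $g$, and each of these visibly preserves the meridional lines $x=c$, $c\in\Z$ (the long edges of clasping triangles, which go once around the meridian). Since the meridian of the cusp corresponds precisely to the homology class carried by these meridional lines, every symmetry of $M$ preserves the meridian slope up to sign, and therefore extends to a homeomorphism of $(\s^3,K)$.

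For a one-component link (a $2$-bridge knot) this is all that is needed, but the genuinely new content the authors advertise is the \emph{two-component} case, where there is an additional subtlety: a symmetry could in principle permute the two cusps, and the extension across fillings must be compatible with any such permutation. Here I would invoke the fact, already used repeatedly in the excerpt, that a two-component $2$-bridge link always has a symmetry exchanging the components and that the two cusp triangulations are identical; the algorithmic description of $\widetilde{T}$ then shows that each generator of $\mathrm{Aut}_{ev}(\widetilde{T})$ either fixes each meridional line setwise or maps one to another in the same cusp, never mixing meridian with longitude. Thus every symmetry simultaneously preserves the two meridian slopes, and the filling argument extends it over both cusps at once.

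The hard part will be the bookkeeping that turns ``isometry of $M$'' into ``homeomorphism of the pair $(\s^3,K)$'' rigorously: one must verify that the induced cusp map, which is a priori only a homeomorphism of the cusp torus, can be isotoped to respect the meridian \emph{as a genuine curve} (not merely a homology class) and then that the resulting filling homeomorphism is well defined independent of isotopy choices. I anticipate the cleanest route is to package this through the canonical triangulation: an isometry permutes the ideal tetrahedra of $\mathcal{T}$, hence permutes the edges and in particular fixes the combinatorial data recording which edges meet the meridional lines, so the extension over the fillings is forced and canonical. Care must be taken for the arithmetic exceptional words excluded in Theorem~\ref{SH_implies_iso}, namely $\Omega\in\{RL,R^2L^2,RLR\}$, which should be handled separately by direct inspection of their known symmetry groups.
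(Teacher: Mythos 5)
Your overall strategy is sound and is, at its core, the same as the paper's: identify $Sym(M)$ with the automorphisms of the canonical triangulation (via Mostow--Prasad rigidity plus Gu\'eritaud's canonicity result), show those automorphisms preserve meridians, and conclude that they extend over the trivial Dehn fillings to give symmetries of the pair $(\s^3,K(\Omega))$. The difference is in the key input for meridian-preservation: the paper simply cites Sakuma--Weeks's explicit description of $\mathrm{Aut}(\mathcal{T})$ \cite[pp.\ 415--416]{SaWe}, which applies uniformly to \emph{all} hyperbolic $2$-bridge links, whereas you re-derive meridian-preservation combinatorially from Theorem \ref{SH_implies_iso} (every generator of $\mathrm{Aut}_{ev}(\widetilde{T})$ preserves the vertical lines $x=c$). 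Your route is self-contained within the paper's machinery and is not circular (Theorem \ref{SH_implies_iso} is proved purely combinatorially, independently of Proposition \ref{prop:sym}), and your worry about needing the meridian ``as a genuine curve'' is overcautious: preserving the meridian \emph{slope} is exactly what is needed to extend a homeomorphism over a solid torus, so the homological argument suffices.

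The genuine gap is in the excluded words $\Omega\in\{RL,RLR,R^2L^2\}$, and it is larger than your closing sentence suggests. For these three links your mechanism does not merely need ``care'' --- it \emph{fails}: $\mathrm{Aut}_{ev}(\widetilde{T})$ there contains elements that do \emph{not} preserve meridional lines (the order $6$, $4$, and $3$ rotations of $\widetilde{T}$ exhibited in Theorem \ref{thm:OPHS}; indeed this failure is precisely how the paper detects their hidden symmetries). So for these cases you cannot conclude from the combinatorics of $\widetilde{T}$ that a symmetry of $M$ preserves meridians, and ``direct inspection of their known symmetry groups'' begs the question: the proposition is exactly the claim that $Sym(M)$ is no larger than $Sym(\s^3,K)$, so inspecting $Sym(M)$ alone settles nothing. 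For $RL$ (the figure-eight knot) you can close the gap by citing the Knot Complement Theorem \cite{GL}, as the paper notes; but $RLR$ (Whitehead) and $R^2L^2$ ($6^2_2$) are two-component links, where Gordon--Luecke does not apply. There you would need a separate argument --- e.g., quoting \cite{SaWe} for $\mathrm{Aut}(\mathcal{T})\cong D_4$ together with an explicit exhibition of eight pair-symmetries inducing distinct elements of $Sym(M)$, or else falling back on the paper's own citation of the Sakuma--Weeks description of $\mathrm{Aut}(\mathcal{T})$, which is what makes the paper's one-paragraph proof work uniformly in all cases. As written, your proof establishes the proposition only for $\Omega\notin\{RL,RLR,R^2L^2\}$.
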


\begin{proof}
The work of Gu\'{e}ritaud \cite{Gu} shows that $\mathcal{T}$ is in fact the canonical triangulation of any such hyperbolic $2$-bridge link complement $M$. Thus, $\mathrm{Aut}(\mathcal{T})$, the group of combinatorial automorphisms of this triangulation, is isomorphic to $Sym(M)$. The description of $\mathrm{Aut}(\mathcal{T})$ given in \cite[pp. 415-416]{SaWe} implies that it preserves the meridian(s) of $K(\Omega)$, and hence extends to an action on $(\mathbb{S}^{3}, K(\Omega))$. As a result, the natural inclusion from $Sym(\mathbb{S}^{3}, K(\Omega))$ into $Sym(M)$ is surjective, giving the desired isomorphism. 
\end{proof}

Since $Sym(M)$ is isomorphic to $Sym(\mathbb{S}^{3}, K(\Omega))$, we will no longer distinguish between symmetries of a hyperbolic $2$-bridge link and its complement. Below, we provide visualizations of these symmetries, which will be useful in the proofs of Lemma \ref{lem:maxminedges} and Proposition \ref{syms}. For more visualizations of $2$-bridge link symmetries, see \cite{BlMo}, \cite{BoSi}, and \cite{Sa}.

\begin{figure}
	\includegraphics[scale=.9]{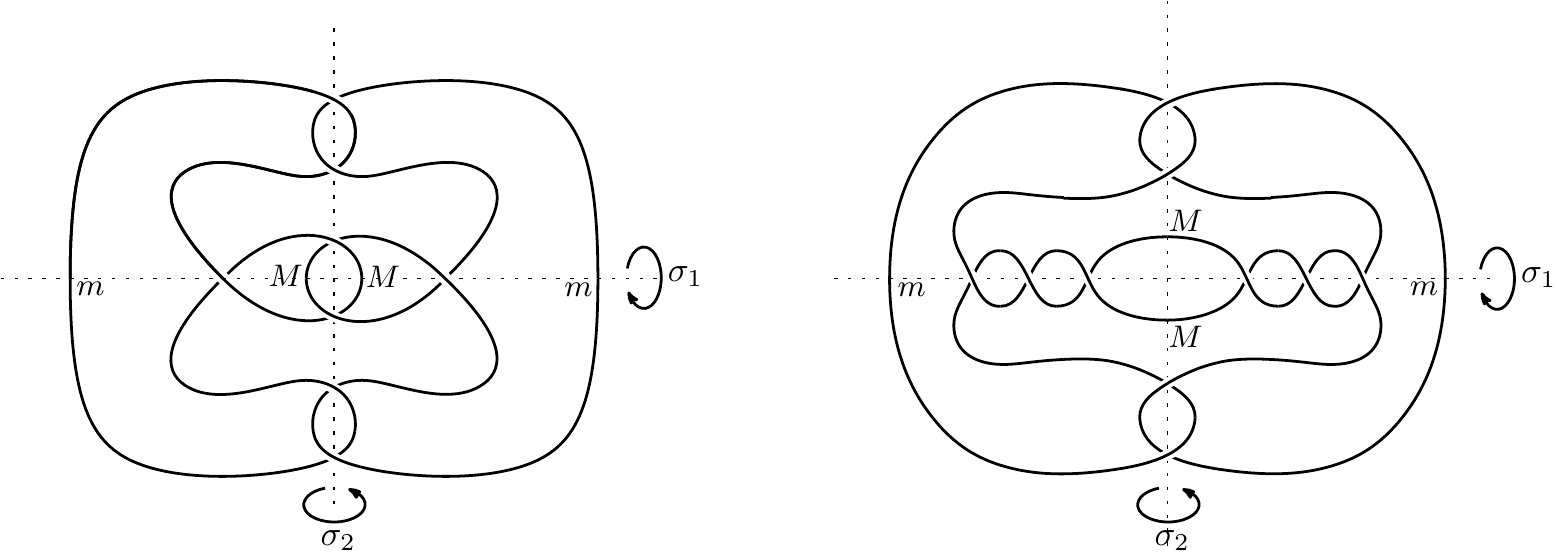}
	\caption{Tri-symmetric projections of a $2$-bridge link with two components (left) and a $2$-bridge knot (right). The axes of symmetry for $\sigma_{1}$ and $\sigma_{2}$ are given in both projections. Maxima are labeled M and minima are labeled m.}
	\label{fig:trysim}
\end{figure}

\begin{figure}
\centering
\begin{subfigure}[b]{0.40\textwidth}
	\includegraphics[scale=1]{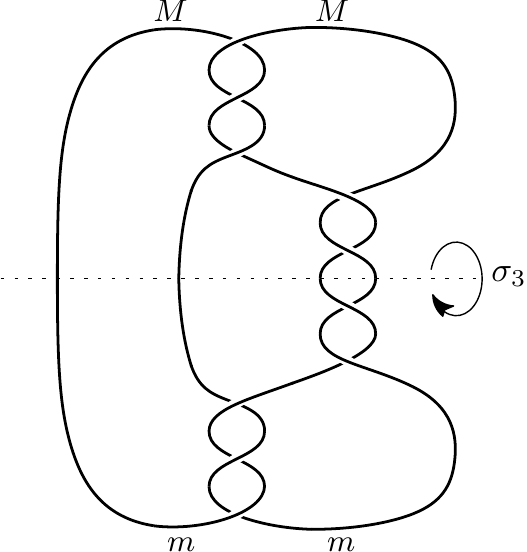}
	\caption{}
	\label{fig:palindromicsim2}
\end{subfigure}
\qquad
\begin{subfigure}[b]{0.45\textwidth}
	\includegraphics[scale=.56]{}
	\caption{}
	\label{fig:even_pal_sym}
\end{subfigure}
\caption{To the left is the standard projection of $K(\Omega)$ with $\Omega$ palindromic and $n$ odd. To the right is a depiction of $K(\Omega)$ in $\mathbb{R}^{3}$ (with knot strands connecting at infinity) with $\Omega$ palindromic and $n$ even. Both visuals show a symmetry $\sigma_{3}$ of $K(\Omega)$. Maxima are labeled M and minima are labeled m.}
\label{palindromicsym}
\end{figure}

Recall that any $2$-bridge link $K(\Omega)$ can be isotoped so that its projection has exactly two maxima and two minima. In all four link diagrams given in Figure \ref{fig:trysim} and Figure \ref{palindromicsym} the corresponding maxima and minima are labeled. In what follows, we will examine how $Sym(M)$ acts on these maxima and minima, and ``meridional edges'' of $\mathrm{Aut}(\mathcal{T})$ that wrap around them.  For an arbitrary link $L \subset \mathbb{S}^{3}$, this would be an issue since the maxima and minima don't have to be preserved up to isotopy, and $Sym(\mathbb{S}^{3}, L)$ is a group of homeomorphisms up to isotopy. However, for a $2$-bridge link, from the work of Schubert \cite{Schu} we know that the set of maxima and minima will be preserved up to isotopy, and so, we are justified in using different projections of $K(\Omega)$ to analyze how symmetries act on the maxima and minima.  

\begin{lemma}
\label{lem:maxminedges}
Each ``meridional edge'' of $T$ wraps around a maximum or minimum of $K(\Omega)$. These meridional edges alternate between ones that wrap around maxima and minima.
\end{lemma}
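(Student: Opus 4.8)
The plan is to follow each meridional edge backwards through the construction of Section \ref{sec:CuspT}. By definition a meridional edge is the long edge of a clasping triangle, and the two clasping triangles $D_1$ and $D_c'$ were produced by the clasping operations on the spheres $S_1$ and $S_c$ (Figures \ref{fig:clasping} and \ref{fig:cusp_trian1}); every meridional edge of $T$ is a deck-translate of the long edge of $D_1$ or of $D_c'$. So it suffices to locate the two clasps on $K(\Omega)$, to show that the long edge of each clasping triangle is a meridian of the cusp encircling $K(\Omega)$ at a local extremum, and then to track the two types of clasping triangle through the tiling.

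First I would argue that the clasps sit at the extrema. Recall that $K(\Omega)$ is the closure of the alternating $4$-string braid lying between $S_1$ and $S_c$, formed by adding a single crossing outside $S_1$ and a single crossing inside $S_c$. The four braid strands carry none of the extrema, so every maximum and minimum of $K(\Omega)$ is created by one of the two closures; since there are exactly two maxima and two minima and each closure caps the four strands off in two arcs, the clasp on $S_1$ accounts for two of the extrema and the clasp on $S_c$ for the other two. This already gives the first assertion once we know that the long edge of a clasping triangle encircles its extremum. For that, note that clasping $S_1$ folds the $4$-punctured sphere along its slope-$\frac{1}{1}$ edges and identifies its punctures in pairs (Figure \ref{fig:clasping}); each merged puncture is exactly a point where a capping arc turns around, i.e.\ an extremum, and in the induced cusp picture the resulting clasping triangle has as its long edge the unique edge running once around that truncated puncture (Figure \ref{fig:cusp_trian1}) --- that is, the meridian of the cusp at the extremum. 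The identical analysis at $S_c$ handles $D_c'$.

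For the alternation I would use the explicit model of $\widetilde{T}$ from Lemma \ref{sum_lem}(a)--(b). In the fundamental rectangle of Figure \ref{fig:T_alg1} the clasping triangle $D_1$ has its long edge on the meridional line $x=0$ and $D_c'$ has its long edge on the adjacent line $x=1$ (the ``left'' and ``right'' clasping triangles of Figure \ref{fig:cusp_trian1c}), and the reflect--rotate--translate recipe then places $D_1$-type and $D_c'$-type meridional edges on strictly alternating lines $x=c$. Reading off the labelled diagrams (Figures \ref{fig:trysim} and \ref{palindromicsym}) shows that the $S_1$-clasp produces extrema of one Morse type and the $S_c$-clasp produces the other, so $D_1$-type and $D_c'$-type meridional edges wrap around maxima and minima respectively. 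Passing to the deck quotient of Lemma \ref{sum_lem}(b), the meridional edges of $T$ therefore alternate between maxima and minima.

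The step I expect to be the main obstacle is the passage from combinatorics to geometry: making precise that the puncture-identifications produced by the clasp coincide with the turning points of the capping arcs, and, for the alternation, that the two clasps genuinely produce opposite Morse types rather than one maximum and one minimum each (the latter would destroy the alternation). The count --- two extrema per clasp, matching the two meridional lines per fundamental domain for a two-component link and four for a knot, with the two components exchanged by a symmetry --- is a clean consistency check, but the Morse-type bookkeeping, most transparent in the symmetric projections of Figures \ref{fig:trysim} and \ref{palindromicsym}, is where the care is needed.
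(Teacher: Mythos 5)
Your proposal is correct and takes essentially the same route as the paper's proof: both identify the meridional edges of $T$ with the long edges of the clasping triangles produced by clasping $S_1$ and $S_c$ (so there are exactly four, matching the four extrema), and both assign maxima to the $S_1$-clasp and minima to the $S_c$-clasp by inspecting the standard projections. The only divergence is the alternation step --- the paper simply orients $K(\Omega)$ and observes that a traversal of the link alternates between maxima and minima, whereas you read alternation off the parity of the meridional lines ($D_1$-type on one parity of $x=c$, $D_c'$-type on the other) in the tiling model of Lemma \ref{sum_lem}; both are valid, and the difference is cosmetic.
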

\begin{proof}
In all cases, $\mathcal{T}$, the canonical triangulation of $\mathbb{S}^{3} \setminus K(\Omega)$, has exactly four meridional edges, and $K(\Omega)$ has exactly four extrema. These meridional edges of $\mathcal{T}$ result from clasping. See Section \ref{sec:CuspT} for details on how clasping the innermost and outermost $4$-punctured spheres, $S_{c}$ and $S_{1}$, affects $\mathcal{T}$. Specifically, clasping $S_{1}$ introduces two meridional edges, each one going around one of the strands of the outermost crossing of $K(\Omega)$. We get the other two meridional edges from clasping $S_{c}$, each one going around one of the strands of the innermost crossing. See Figure \ref{fig:clasping} for how clasping forms these meridional edges.  The two meridional edges coming from clasping $S_{1}$ each go around a maximum of $K(\Omega)$, while the two meridional edges coming from clasping $S_{c}$ each go around a minimum of $K(\Omega)$. Since there are exactly four meridional edges in $\mathcal{T}$ and exactly four meridional edges in $T$, these sets must correspond with one another. Thus, each meridional edge of $T$ wraps around a maximum or minimum of $K(\Omega)$. These meridional edges alternate between wrapping around maxima and minima since if we orient $K(\Omega)$, our path alternates between traversing maxima and minima.
\end{proof}

We now consider the lifts of the meridional edges of $T$ to $\widetilde{T}$. In what follows, we shall call the lifts of meridional edges of $T$ that wrap around a maximum of $K(\Omega)$ \textit{maximal meridional edges.} Similarly, we shall call the lifts of the meridional edges of $T$ that wrap around a minimum of $K(\Omega)$ \textit{minimal meridional edges.}

We now describe how the symmetries of a hyperbolic $2$-bridge link complement act on $\widetilde{T}$. Recall that $n$ is the number of syllables in the word $\Omega$. If $K(\Omega)$ is a two component link, then we say $\widetilde{T} = \widetilde{T}_{1} \cup \widetilde{T}_{2}$, where $\widetilde{T}_{1}$ and $\widetilde{T}_{2}$ are identical triangulations of $\mathbb{R}^{2}$, coming from lifting an equal volume cusp cross-section of $\mathbb{S}^{3} \setminus K(\Omega)$.

Recall that $\sigma_{1}$, $\sigma_{2}$, and $\sigma_{3}$ are the symmetries of $Sym(\mathbb{S}^{3}, K(\Omega))$ desribed above and shown in Figure \ref{fig:trysim} and Figure \ref{palindromicsym}.

\begin{prop}\label{syms}
$Sym(M) = Sym(\mathbb{S}^{3}, K(\Omega))$ acts on $\widetilde{T}$ (up to deck transformations) in the following manner:

If $K(\Omega)$ is a knot, then
\begin{itemize}
\item $\sigma_{1}$ acts as a rotation of $\pi$ about $(1,1)$, and 
\item $\sigma_{2}$ acts as a rotation of $\pi$ about $(2,1)$.
\end{itemize}
If $K(\Omega)$ is a two component link, then
\begin{itemize}
\item $\sigma_{1}$ acts as a rotation of $\pi$ about $(1,1)$ in both $\widetilde{T}_{1}$ and $\widetilde{T}_{2}$, and 
\item $\sigma_{2}$ exchanges $(\mathbb{R}^{2}, \widetilde{T}_{1})$ and $(\mathbb{R}^{2}, \widetilde{T}_{2})$ by the identity map.
\end{itemize}
If $\Omega$ is palindromic, then
\begin{itemize}
\item if $n$ is odd, $\sigma_{3}$ acts as a rotation of $\pi$ about $(\frac{1}{2}, 1)$, and
\item if $n$ is even, $\sigma_{3}$ acts as a glide reflection where we reflect across the line $x = \frac{1}{2}$ and translate by $(x,y) \rightarrow (x, y+1)$ (possibly composed with the rotations $\sigma_{1}$ and $\sigma_{2}$).
\end{itemize}
\end{prop}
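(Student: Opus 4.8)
The plan is to prove Proposition \ref{syms} by leveraging the geometric picture of the symmetries $\sigma_1,\sigma_2,\sigma_3$ established in Figures \ref{fig:trysim} and \ref{palindromicsym}, combined with the explicit coordinatization of $\widetilde{T}$ from the algorithmic construction in Section \ref{sec:CuspT} and Lemma \ref{sum_lem}(a)--(b). The key observation that makes everything tractable is Lemma \ref{lem:maxminedges}: each symmetry must permute the maxima and minima of $K(\Omega)$, and since meridional edges of $T$ (hence meridional lines of $\widetilde{T}$) wrap around exactly these extrema, the action of each $\sigma_i$ on the extrema determines its action on the meridional lines, and thereby pins down the underlying Euclidean isometry of $\R^2$ up to deck transformations.

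First I would fix the dictionary between the link picture and the triangulation. Using Proposition \ref{prop:sym}, each $\sigma_i\in Sym(\s^3,K(\Omega))$ restricts to a combinatorial automorphism of $\mathcal{T}$ and hence induces a simplicial automorphism of $\widetilde{T}$ that preserves edge valence, i.e.\ an element of $\mathrm{Aut}_{ev}(\widetilde{T})$. By Theorem \ref{SH_implies_iso}, any such automorphism is (modulo deck transformations) one of $\rho_1,\rho_2,\rho_3,g$ or a composition thereof. So the proof reduces to identifying which of these Euclidean isometries each $\sigma_i$ induces. The strategy for each case is to track a single distinguished feature --- a clasping triangle, a maximal versus minimal meridional edge, or the central syllable --- through the symmetry and match it against the rotation/reflection data.

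For $\sigma_1$ (the symmetry fixing the standard projection, rotating about a horizontal axis through the crossings), I would argue it fixes the set of maxima and the set of minima while reversing the longitudinal direction of travel along $K$; this forces it to fix each meridional line setwise and act as the order-two rotation $\rho_1$ about $(1,1)$. For $\sigma_2$ in the knot case, the picture shows it rotates the two strands into each other, cyclically shifting the extrema, which translates into the rotation $\rho_2$ about $(2,1)$ (a shift by one full meridian past $\sigma_1$'s axis). In the two-component case $\sigma_2$ is the symmetry \emph{exchanging the two link components}, so on $\widetilde{T}=\widetilde{T}_1\cup\widetilde{T}_2$ it must swap the two copies of $\R^2$; since both cusp triangulations are identical and glued by equal-volume neighborhoods, this swap is the identity map, as claimed. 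For $\sigma_3$ (palindromic case), the reflective symmetry of the word interchanges $S_1$ with $S_c$ and hence swaps maximal meridional edges with minimal ones. When $n$ is odd the central syllable sits on the axis and $\sigma_3$ is realized as the rotation $\rho_3$ about $(\tfrac12,1)$; when $n$ is even there is no central fixed syllable, and the swap of maxima/minima combined with the orientation-reversing nature of the symmetry produces the glide reflection $g$ (reflection across $x=\tfrac12$ followed by $(x,y)\mapsto(x,y+1)$), possibly adjusted by $\sigma_1,\sigma_2$ to normalize the basepoint.

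The main obstacle I anticipate is the bookkeeping in the $\sigma_3$ even case: one must verify that the orientation-reversing symmetry genuinely lands on the glide reflection rather than a pure reflection, and correctly account for the vertical offset of clasping triangles noted in Section \ref{sec:CuspT} when $\Omega_1\neq\Omega_c$. This requires carefully comparing how $\sigma_3$ maps the clasping triangle of $S_1$ (wrapping a maximum) to the clasping triangle of $S_c$ (wrapping a minimum), and confirming the resulting identification on $\widetilde{T}$ has the precise translational part $y\mapsto y+1$; the parenthetical ``possibly composed with $\sigma_1$ and $\sigma_2$'' signals that the answer is only well-defined up to the previously identified rotations, so I would phrase the conclusion modulo $\langle\rho_1,\rho_2\rangle$ and the deck group to avoid overcommitting to a single representative.
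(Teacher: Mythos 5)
Your overall architecture matches the paper's proof: use Proposition \ref{prop:sym} and Theorem \ref{SH_implies_iso} to conclude that each $\sigma_i$ acts on $\widetilde{T}$ as a Euclidean isometry drawn, modulo deck transformations, from compositions of $\rho_1,\rho_2,\rho_3,g$, and then use Lemma \ref{lem:maxminedges} to identify which one. But the identification step---the crux---is broken for $\sigma_1$ and $\sigma_2$ in the knot case, because the permutations of the extrema you attribute to them are wrong, and those permutations are exactly the data that distinguish the center $(1,1)$ from $(2,1)$. Reading Figure \ref{fig:trysim} as the paper does: $\sigma_1$ \emph{exchanges} the two maxima while fixing each of the two minima, and $\sigma_2$ exchanges the two minima while fixing each of the maxima. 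Correspondingly $\rho_1$ (rotation by $\pi$ about $(1,1)$) swaps the two maximal meridional lines, say $x=0$ and $x=2$, while fixing the minimal lines $x=1$ and $x=3$ setwise, and $\rho_2$ does the opposite. Your claim that $\sigma_1$ ``fixes the set of maxima and the set of minima'' is true of \emph{both} $\rho_1$ and $\rho_2$ (and, read setwise, also of the translation $\tau_{2,0}$, which swaps the maxima and swaps the minima), so it cannot pin down the center; and your stronger conclusion that $\sigma_1$ ``fixes each meridional line setwise'' is impossible for any $\pi$-rotation: a rotation about $(a,1)$ sends the line $x=c$ to $x=2a-c$, which cannot be congruent to $c$ modulo the deck period $4$ for all integers $c$. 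Worse, describing $\sigma_2$ as ``cyclically shifting the extrema'' would mean $\sigma_2$ sends maxima to minima; by Lemma \ref{lem:maxminedges} that forces maximal meridional edges to map to minimal ones, which is the behavior of $\sigma_3$ (and of $\rho_3$ and $g$), not of $\rho_2$. As written, your argument would identify $\sigma_2$ with a rotation about $(\frac{1}{2},1)$ rather than $(2,1)$, contradicting your own conclusion.

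The repair is the paper's two-step reasoning: (i) $\sigma_1$ and $\sigma_2$ reverse the orientation of the longitude, so, being orientation-preserving, they act as rotations rather than translations; (ii) the precise permutation of maximal versus minimal meridional edges---swap maxima and fix minima for $\sigma_1$, the reverse for $\sigma_2$, swap maxima with minima for $\sigma_3$---then forces the centers $(1,1)$, $(2,1)$, and $(\frac{1}{2},1)$ respectively. Your treatment of the two-component case and of the palindromic cases is essentially sound, and the obstacle you flag in the even palindromic case (ruling out a pure reflection) is already disposed of by Theorem \ref{SH_implies_iso}, which shows the only orientation-reversing elements of $\mathrm{Aut}_{ev}(\widetilde{T})$ are $g$ composed with the rotations and deck transformations; once Theorem \ref{thm:Symclass} says $\sigma_3$ is orientation-reversing for $n$ even, the conclusion is immediate. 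But the knot-case identification of $\rho_1$ and $\rho_2$ must be redone with the correct extrema data.
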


\begin{proof}
First, we claim that any symmetry of $M$ acts on $(\mathbb{R}^{2}, \widetilde{T})$ by an isometry of $\mathbb{R}^{2}$. A priori, a symmetry of $M$ gives rise only to an element $f$ of $\mathrm{Aut}_{ev}(\widetilde{T})$ since this triangulation is metrically distorted in our construction. By Theorem \ref{SH_implies_iso}, any such simplicial homeomorphism (that preserves edge valences) of $\widetilde{T}$ is a composition of deck transformations (which are specific translations) and a specific set of rotations, reflections, and glide reflections. Thus, any such $f$ must be a Euclidean isometry.     

First, we consider the symmetries $\sigma_{1}$ and $\sigma_{2}$ of $M$ that generate a subgroup of $Sym(M)$ isomorphic to $Z_{2} \oplus Z_{2}$. By Theorem \ref{thm:Symclass}, these symmetries are always orientation-preserving, and so, we just need to consider rotations and translations of $\mathbb{R}^{2}$. We do this in two cases.

\textbf{Case 1: $K(\Omega)$ is a knot.}
In this case, we note the following properties of $\sigma_{1}$ and $\sigma_{2}$. These properties come from examining the tri-symmetric projection given in Figure \ref{fig:trysim}. 
\begin{itemize}
\item $\sigma_{1}$ exchanges the maxima of $K(\Omega)$ while fixing the minima of $K(\Omega)$.   
\item $\sigma_{2}$ exchanges the minima of $K(\Omega)$ while fixing the maxima of $K(\Omega)$.
\item $\sigma_{1}$ and $\sigma_{2}$ change the orientation of the longitude of $K(\Omega)$.
\end{itemize}
Since both $\sigma_{1}$ and $\sigma_{2}$ change the orientation of the longitude, they cannot be translations, and so, must be rotations. By Lemma \ref{lem:maxminedges}, $\sigma_{1}$ must exchange the maximal meridional edges while fixing the two minimal meridional edges. Thus, up to deck transformations, $\sigma_{1}$ must be a rotation of $\pi$ about $(1,1)$. Similarly, up to deck transformations, $\sigma_{2}$ must be a rotation of $\pi$ about $(2,1)$.

\textbf{Case 2: $K(\Omega)$ is a 2-component link.}
Here, we once again note several important features of $\sigma_{1}$ and $\sigma_{2}$ acting on $(\mathbb{S}^{3}, K(\Omega))$ which come from examining the tri-symmetric projection in Figure \ref{fig:trysim}.
\begin{itemize}
\item $\sigma_{1}$ sends each component of $K(\Omega)$ to itself, with maxima mapping to maxima and minima mapping to minima.
\item $\sigma_{2}$ exchanges the two link components, with maxima mapping to maxima and minima mapping to minima.
\item $\sigma_{1}$ changes the orientations of both of the longitudes of $K(\Omega)$, while $\sigma_{2}$ preserves these orientations.  
\end{itemize}
Since $\sigma_{1}$ is an orientation-preserving symmetry that switches the orientation of both of the longitudes, it must act as a rotation on both copies of $\mathbb{R}^{2}$. Up to deck transformations, the only possible rotation that maps the two maximal meridional edges to themselves and maps the two minimal meridional edges to themselves is a rotation of $\pi$ about $(1,1)$ in both $(\mathbb{R}^{2}, \widetilde{T}_{1})$ and $(\mathbb{R}^{2}, \widetilde{T}_{2})$. 
Since $\sigma_{2}$ interchanges the cusps and preserves orientations of the longitudes, it must take $\widetilde{T}_{1}$ to $\widetilde{T}_{2}$ by a translation. Since the minimal meridional edge of $\widetilde{T}_{1}$ must map to the minimal meridional edge of $\widetilde{T}_{2}$, $\sigma_{2}$ must be the identity map between these triangulations of $\mathbb{R}^{2}$, up to deck transformations. 

\textbf{Case 3: $\Omega$ is palindromic.}
Now, we consider any additional symmetries of $Sym(M)$, which occur only if $\Omega$ is palindromic. By examining the projections of $K(\Omega)$ given in Figure \ref{palindromicsym}, we see that $\sigma_{3}$ has the following properties:
\begin{itemize}
\item $\sigma_{3}$ exchanges the maxima of $K(\Omega)$ with the minima of $K(\Omega)$.
\item $\sigma_{3}$ changes the orientation of the longitude of $K(\Omega)$ (or both longitudes if $K(\Omega)$ is a two component link).
\end{itemize} 

First, suppose that $n$ is odd. By Theorem \ref{thm:Symclass}, $\sigma_{3}$ is an orientation-preserving symmetry, and since it changes the orientation of the longitude, it must be a rotation of $\mathbb{R}^{2}$. Since $\sigma_{3}$ must exchange maximal meridional edges with minimal meridional edges, it must act as a rotation about $(\frac{1}{2},1)$ on $(\mathbb{R}^{2}, \widetilde{T})$ or rotations about $(\frac{1}{2},1)$ in both $(\mathbb{R}^{2}, \widetilde{T}_{1})$ and $(\mathbb{R}^{2}, \widetilde{T}_{2})$, if $K(\Omega)$ has two components.  

Now, suppose that $n$ is even. By Theorem \ref{thm:Symclass}, $\sigma_{3}$ is an orientation-reversing symmetry of $M$, and so, $\sigma_{3}$'s action on $\widetilde{T}$ is also orientation-reversing. Theorem \ref{SH_implies_iso} tells us that $\sigma_{3}$ must either correspond with the glide reflection $g$ or a composition of $g$ with the rotations $\rho_{1}$ and $\rho_{2}$ (up to deck transformation). This gives the desired description of $\sigma_{3}$.
\end{proof}


\section{Hidden symmetries of $2$-bridge link complements}
\label{sec:hiddensym}

Let $C(\Gamma)$ and $N(\Gamma)$ be the commensurator and normalizer of $M = \mathbb{H}^{3} / \Gamma = \mathbb{S}^{3} \setminus K(\Omega)$, respectively, as defined in Section \ref{sec:intro}. Now that we understand the symmetries of $M$ (Section \ref{sec:sym}), and the simplicial homeomorphisms of the canonical (lifted) cusp triangulation $\widetilde{T}$ (Section \ref{sec:CuspT}), we are ready to characterize the hidden symmetries of $M$, i.e., the elements of $C(\Gamma)\setminus N(\Gamma)$. Clearly, arithmetic links always have hidden symmetries, since in this case $C(\Gamma)$ is dense in $\mathrm{Isom}(\h^3)$. But hidden symmetries of arithmetic links will not necessarily be symmetries of the canonical cusp triangulation $\widetilde{T}$. We call a hidden symmetry \emph{detectable} if it is also a symmetry of $\widetilde{T}$. For non-arithmetic links, all hidden symmetries are detectable. 

Recall that $\mathrm{Aut}_{ev}(\widetilde{T})$ is the group of simplicial automorphisms of $\widetilde{T}$ preserving edge valence, so that $\mathrm{Aut}_{ev}^+(\widetilde{T})$ is the subgroup consisting of those that preserve orientation.

\subsection{Orientation-Preserving Hidden Symmetries}
\label{subsec:OPHS}

\begin{thm}
\label{thm:OPHS}
If $M = \mathbb{S}^{3} \setminus K(\Omega)$ is a hyperbolic $2$-bridge link complement, then we have the following classification of orientation-preserving hidden symmetries:
\begin{itemize}
\item If $M$ is non-arithmetic, then $M$ admits no hidden symmetries.
\item If $M$ is the figure-eight knot complement, then $M$ admits an order $6$ detectable hidden symmetry. 
\item If $M$ is the Whitehead link complement, then $M$ admits an order $4$ detectable hidden symmetry.
\item If $M$ is the $6_{2}^{2}$ link complement, then $M$ admits an order $3$ detectable hidden symmetry.
\item If $M$ is the $6_{3}^{2}$ link complement, then $M$ does not admit any detectable hidden symmetries. 
\end{itemize}
\end{thm}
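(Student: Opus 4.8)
The plan is to prove each bullet of Theorem \ref{thm:OPHS} by leveraging the structural results already established, especially Theorem \ref{SH_implies_iso}, which gives us a finite list of candidate generators for $\mathrm{Aut}_{ev}(\widetilde{T})$, together with Proposition \ref{syms}, which tells us precisely which of those candidates arise as genuine symmetries of $M$. The key conceptual point is that by \cite[Theorem 2.6]{GoHeHo}, for a non-arithmetic $M$ we may identify $C(\Gamma)$ with the symmetry group of $\widetilde{\mathcal{T}}$, and hence (via the injection $\mathrm{Stab}_{\mathrm{Aut}(\widetilde{\mathcal{T}})}(p)\hookrightarrow \mathrm{Aut}_{ev}(\widetilde{T})$) every hidden symmetry is detectable and lives in $\mathrm{Aut}_{ev}^+(\widetilde{T})$. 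So the statement $C(\Gamma)=N(\Gamma)$ reduces to showing that every orientation-preserving element of $\mathrm{Aut}_{ev}(\widetilde{T})$ is accounted for by a symmetry of $M$ (i.e.\ by $N(\Gamma)/\Gamma$).

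First I would dispose of the non-arithmetic case, which excludes $\Omega\in\{RL, R^2L^2, RLR, RL^2R\}$. For these $\Omega$ we may apply Theorem \ref{SH_implies_iso}, so $\mathrm{Aut}_{ev}^+(\widetilde{T})$ is generated by deck transformations together with a subset of $\{\rho_1,\rho_2,\rho_3\}$, with $\rho_1,\rho_2$ always present. Comparing this against Proposition \ref{syms}, we see that $\rho_1$ is realized by $\sigma_1$ and $\rho_2$ by $\sigma_2$, while $\rho_3$ (the rotation about $(\tfrac12,1)$) occurs precisely when $\Omega$ is palindromic with $n$ odd, in which case it is realized by $\sigma_3$. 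The crucial observation is that $\rho_3$ is in $\mathrm{Aut}_{ev}(\widetilde{T})$ if and only if it is a simplicial automorphism (the last clause of Theorem \ref{SH_implies_iso}), and being simplicial forces the palindromic-with-$n$-odd condition, exactly the case where $\sigma_3$ exists. Hence every generator of $\mathrm{Aut}_{ev}^+(\widetilde{T})$ is realized by an element of $Sym^+(M)$, so $C^+(\Gamma)=N^+(\Gamma)$ and there are no orientation-preserving hidden symmetries. I would write this as a short argument matching the generator list of Theorem \ref{SH_implies_iso} against the symmetry list of Proposition \ref{syms}.

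The remaining four bullets are the arithmetic cases $\Omega\in\{RL, RLR, R^2L^2, RL^2R\}$, which were explicitly excluded from Theorem \ref{SH_implies_iso}, so here I would work directly with the cusp triangulation $\widetilde{T}$ (and, where needed, $\widetilde{\mathcal{T}}$) computed by hand from the algorithm of Lemma \ref{sum_lem}(a). For the figure-eight ($RL$), Whitehead ($RLR$), and $6_2^2$ ($R^2L^2$) complements, the cusp triangulation has extra combinatorial symmetry beyond what $Sym(M)$ provides: I would exhibit an explicit element of $\mathrm{Aut}_{ev}^+(\widetilde{T})$ — an order $6$, order $4$, and order $3$ rotation respectively — that is not in the image of $Sym^+(M)$, confirming it as a detectable hidden symmetry and verifying its order by its action on a fundamental domain. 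For $6_3^2$ ($RL^2R$), by contrast, I would show that no such extra symmetry exists: although this link is arithmetic (hence has infinitely many hidden symmetries by Margulis), its canonical cusp triangulation is rigid enough that $\mathrm{Aut}_{ev}^+(\widetilde{T})$ coincides with the image of $Sym^+(M)$, so none of its hidden symmetries is detectable.

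The main obstacle is the arithmetic cases, and specifically the $6_3^2$ bullet. The non-arithmetic case is essentially a bookkeeping comparison of two already-established lists, but the arithmetic triangulations are small and degenerate — they are exactly the $\Omega$ for which Lemma \ref{SH_lemma} and Theorem \ref{SH_implies_iso} break down because clasping triangles fail to have a distinguishing odd-valence vertex, or because $S\times I$ is not a genuine product. Consequently one cannot invoke the clean structural theorems and must instead verify edge-valence-preserving simplicial symmetries by explicit inspection of the finite cusp picture. The subtle point for $6_3^2$ is to show rigidity despite its arithmeticity: one must carefully check that every candidate extra symmetry of the (highly symmetric looking) Euclidean cusp cellulation fails either to be simplicial or to preserve edge valence, so that the only detectable automorphisms are the geometric symmetries of $M$ itself.
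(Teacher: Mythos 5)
Your non-arithmetic argument follows the paper's own Case 1 essentially verbatim: reduce hidden symmetries to elements of $\mathrm{Aut}_{ev}^+(\widetilde{T})$ via the canonical triangulation, then match the generators $\rho_1,\rho_2,\rho_3$ of Theorem \ref{SH_implies_iso} against the symmetries $\sigma_1,\sigma_2,\sigma_3$ of Proposition \ref{syms}. That part is sound. But your treatment of the arithmetic bullets contains a misreading: you assert that all four arithmetic words, including $RL^2R$, were ``explicitly excluded from Theorem \ref{SH_implies_iso}'' and that these are exactly the $\Omega$ for which Lemma \ref{SH_lemma} breaks down. That is false. The excluded set is only $\{RL, R^2L^2, RLR\}$; Case 3 of the proof of Lemma \ref{SH_lemma} explicitly treats $\Omega=RL^mR^k$ with $\Omega\neq RLR$, which covers $RL^2R$. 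Indeed, the paper's entire proof of the $6_3^2$ bullet is the one-line observation that the non-arithmetic argument applies verbatim to $RL^2R$. Your proposed substitute (a by-hand rigidity check of the $RL^2R$ cusp triangulation) is not wrong in principle, but it redoes for one word what the structural theorems already provide.

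The more serious gap is in the figure-eight, Whitehead, and $6_2^2$ bullets. You propose to exhibit an element of $\mathrm{Aut}_{ev}^+(\widetilde{T})$ of order $6$, $4$, or $3$ not in the image of $Sym^{+}(M)$, and to conclude that it is a detectable hidden symmetry. That conclusion does not follow. A detectable hidden symmetry is by definition an element of $C^{+}(\Gamma)\setminus N^{+}(\Gamma)$ --- an honest isometry of $\h^{3}$ --- that preserves $\widetilde{\mathcal{T}}$. The identification of combinatorial symmetries of the tiling with commensurator elements (\cite[Theorem 2.6]{GoHeHo}) is available only in the non-arithmetic case, which is precisely what these three manifolds are not, and the injection $\mathrm{Stab}_{\mathrm{Aut}(\widetilde{\mathcal{T}})}(p)\hookrightarrow \mathrm{Aut}_{ev}(\widetilde{T})$ goes the wrong way for your purposes: it does not tell you that a combinatorial automorphism of the cusp triangulation is induced by an isometry. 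This is where the paper does real work. It first verifies that the pictures of $\widetilde{T}$ in Figure \ref{fig:tilings} are metrically correct, by observing that opposite edges of each tetrahedron have equal valence, assigning dihedral angle $\frac{2\pi}{k}$ to each valence-$k$ edge, and checking that this yields Euclidean cusp cross sections; this makes the candidate rotation $\rho_v$ an actual isometry of the cusp. It then extends $\rho_v$ to an isometry of $\h^{3}$ preserving $\widetilde{\mathcal{T}}$ by rotating about the corresponding edge $e_v$ and checking that the induced maps between the lifts $\widetilde{T}_{v_1}\to\widetilde{T}_{v_2}$ are isometric. Without this metric verification and extension step, membership in $C^{+}(\Gamma)$ --- and hence the existence of the claimed hidden symmetries --- is unestablished.
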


\begin{proof}

\textbf{Case 1: M is non-arithmetic.} Since the triangulation $\mathcal{T}$ of $M$ is canonical, it descends to a cellulation of the minimal (orientable) orbifold $\mathcal{O}^+=\h^{3} /C^+(\Gamma)$, where $C^+(\Gamma)$ is the orientable commensurator of $M$. Hence any orientation-preserving symmetry or hidden symmetry $h\in C^+(\Gamma)\le \mathrm{Isom}^+(\h^3)$ must preserve the lifted triangulation $\widetilde{\mathcal{T}}$, which we may assume has a vertex at $\infty\in S_\infty=\R^2\cup\{\infty\}$. Since $M$ either has one cusp or has a symmetry exchanging its cusps, $N^+(\Gamma)$ acts transitively on the set of vertices of $\widetilde{\mathcal{T}}$. Thus for some $g\in N^+(\Gamma)$, $h\circ g$ fixes $\infty \in S_\infty$. Since $h$ is a symmetry of $M$ if and only if $h\circ g$ is, we may assume  that $h$ fixes $\infty \in S_\infty$. Identifying $\widetilde{T}$ with a horosphere about $\infty$, we see then that $h$ restricts to a simplicial automorphism of $\widetilde{T}$, and this restriction determines $h$ (if $K$ has two components, we understand $\widetilde{T}$ to mean a component of $\widetilde{T}_1\cup\widetilde{T}_2$). It is enough, then, to show that any element of $\mathrm{Aut}_{ev}^+(\widetilde{T})$ comes from a symmetry of $M$ (possibly composed with deck transformations of $\widetilde{T}$). 

Let $G=\Z\oplus \Z$ be the deck group of $\widetilde{T}$. By Theorem \ref{SH_implies_iso}, $\mathrm{Aut}_{ev}^+(\widetilde{T})/G$ is generated by $\{\rho_1, \rho_2, \rho_3\}$ if $\rho_3$ is a simplicial automorphism, and is generated by  $\{\rho_1, \rho_2\}$ if $\rho_3$ is not simplicial. 
Let $\sigma_1$, $\sigma_2$, $\sigma_3$ be the symmetries described in Proposition \ref{syms}, 
and let $H$ be the horizontal strip in the first quadrant with a vertex at the origin.


We first observe that $\rho_1=\sigma_1$, and $\rho_2$ is either $\sigma_2$, or $\sigma_1$ composed with a deck transformation, depending on whether $K$ has $1$ or $2$ components. Hence $\rho_1$ and $\rho_2$ come from symmetries of $M$ in both cases, and so for the case where $\rho_3$ is not simplicial, $M$ cannot have hidden symmetries. If $\rho_3$ \emph{is} simplicial, then since the reflection $r_y$ across $y=1$ is always a simplicial automorphism (by construction of $\widetilde{T}$), the reflection $\rho_3\circ r_y$ across $x=\frac{1}{2}$ is also simplicial. Hence in this case $H$ is symmetric about the line $x=\frac{1}{2}$, and so $\Omega$ is palindromic with $\Omega_c=R$, and it follows that $\rho_3$ comes from the symmetry $\sigma_3$ of $M$. Again, we conclude that $M$ has no hidden symmetries.

\textbf{Case 2: M is arithmetic.} There are exactly $4$ arithmetic $2$-bridge links: the figure-eight knot ($\Omega=RL$), the Whitehead link ($\Omega=RLR$), the $6_2^2$ link ($\Omega=R^2L^2$), and the $6_3^2$ link ($\Omega=RL^2R$).

Since $\Omega=RL^2R$ is not an excluded case in Lemma \ref{SH_lemma} and its corollaries, the arguments in Case 1 above show that, if $M$ is the $6_3^2$ link complement, then every $h\in\mathrm{Aut}_{ev}^+(\widetilde{T})$  that preserves edge valence comes from a symmetry of $M$, i.e., $M$ admits no detectable orientation-preserving hidden symmetries. 

If $M$ is the figure-eight knot, the Whitehead link complement, or the $6_2^2$ link complement, then we can see by edge/vertex (valence) correspondences in $\widetilde{T}$ that if $e$ and $e'$ are two edges of a tetrahedron in $\mathcal{T}$ which are opposite each other (i.e., they do not share a vertex), then $val(e)=val(e')$. This is evident in $\widetilde{T}$ by the fact that any edge and vertex of $\widetilde{T}$ that are across from each other (i.e., their convex hull is a single triangle of $\widetilde{T}$) have the same valence. This makes it easy to identify the (unique) hyperbolic structure on $\mathcal{T}$. If an edge of a tetrahedron has valence $k$, then we make the dihedral angle at that edge $\frac{2\pi}{k}$. We just need to make sure that this gives a Euclidean structure to the cusp cross sections, but this is confirmed by Figure \ref{fig:tilings}. It follows that the depictions of $\widetilde{T}$ in Figure \ref{fig:tilings} are actually metrically correct (up to scaling), so the rotations $\rho_v$ indicated are isometries of $\widetilde{T}$. Next we check that $\rho_v$ extends to an isometry of the 3-dimensional triangulation $\widetilde{\mathcal{T}}$. Viewing $\widetilde{T}$ as a horosphere about $\infty$ in the upper half-space model of $\h^3$, the vertex $v$ about which $\rho_v$ rotates $\widetilde{T}$ corresponds to some edge $e_v$ of $\widetilde{\mathcal{T}}$ connecting $\infty$ to a point $p_v \in \partial \h^3\setminus \{\infty\}$. The rotation of $\h^3$ about $e_v$ that agrees with $\rho_v$ on $\widetilde{T}$ induces a rotation of the lift $\widetilde{T}_v$ of $T$ centered at $p_v$, which is an isometry since $\widetilde{T}$ and $\widetilde{T}_v$ are isometric and $e_v$ appears in both as a vertex of the same valence. If $v_1$ is some other vertex of $\widetilde{T}$, and $\rho_v(v_1)=v_2$, then since $\rho_v$ differs from $\rho_{v_1}$ by composition with symmetries of $M$ and deck transformations of $\widetilde{T}$, the rotation of $\h^3$ induced by $\rho_v$ takes $\widetilde{T}_{v_1}$ to $\widetilde{T}_{v_2}$ isometrically. It follows that $\rho_v$ induces an isometry on $\widetilde{\mathcal{T}}$, of the order indicated in the statement of the theorem.
\end{proof}

\begin{figure}
        \centering
        \begin{subfigure}[b]{0.32\textwidth}
                \includegraphics[scale=0.6]{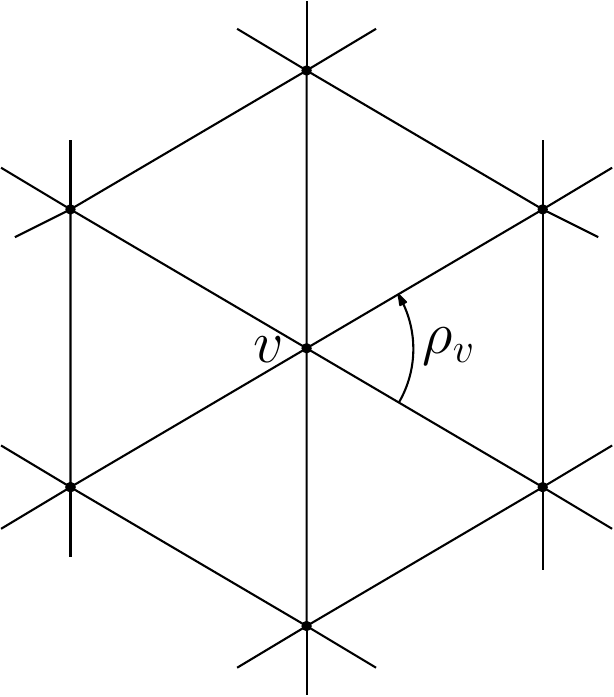}
                \caption{$\Omega=RL$}
                \label{fig:tilings_a}
        \end{subfigure}
       \begin{subfigure}[b]{0.32\textwidth}
                \includegraphics[scale=0.6]{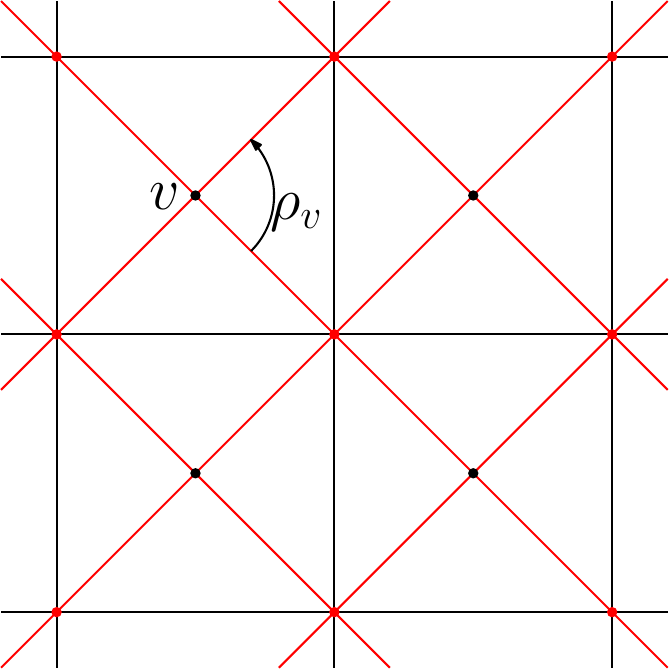}
                \caption{$\Omega=RLR$}
                \label{fig:tilings_b}
        \end{subfigure}     
        \begin{subfigure}[b]{0.32\textwidth}
                \includegraphics[scale=0.7]{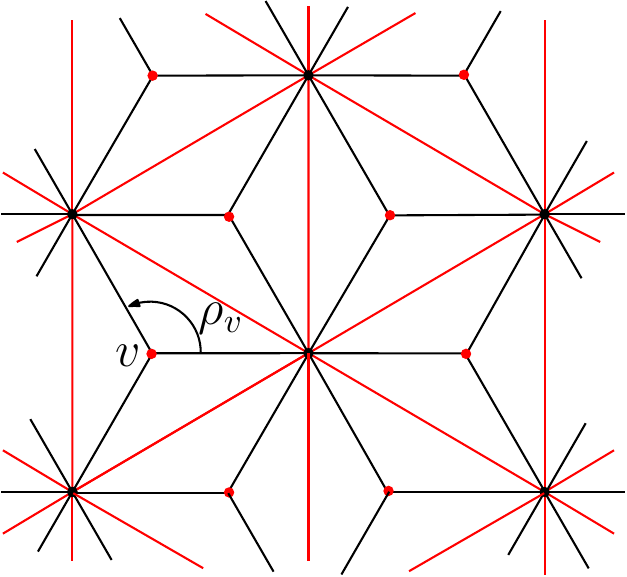}
                \caption{$\Omega=R^2L^2$}
                \label{tilings_c}
        \end{subfigure}  
        \caption{Lifted cusp triangulation $\widetilde{T}$ for the figure-eight knot, Whitehead link, and $6^2_2$ link complements, from left to right. Edges/vertices with the same coloring (within each figure) have the same valence.}
        \label{fig:tilings}
\end{figure}


\subsection{Orientation-Reversing Hidden Symmetries}
\label{subsec:ORHS}

\begin{thm}
\label{thm:ORHS}
If $M = \mathbb{S}^{3} \setminus K(\Omega)$ is a hyperbolic $2$-bridge link complement, then we have the following classification of orientation-reversing hidden symmetries: 
\begin{itemize}
\item If $M$ is non-arithmetic, then $M$ admits no orientation reversing hidden symmetries.
\item If $M$ is the $6_{3}^{2}$ link complement, then $M$ admits no detectable orientation reversing hidden symmetry.
\item If $M$ is the figure-eight knot complement, the Whitehead link complement, or the $6_{2}^{2}$ link complement, then $M$ admits an order $2$ orientation reversing hidden symmetry. 
\end{itemize}
\end{thm}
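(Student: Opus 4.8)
The plan is to mirror the structure of the orientation-preserving argument (Theorem \ref{thm:OPHS}), replacing $\mathrm{Aut}_{ev}^+(\widetilde{T})$ and $N^+(\Gamma)$ with the full groups $\mathrm{Aut}_{ev}(\widetilde{T})$ and $N(\Gamma)$. The key structural input is Theorem \ref{SH_implies_iso}, which tells us that the only orientation-reversing element of $\mathrm{Aut}_{ev}(\widetilde{T})$ (modulo deck transformations and the orientation-preserving rotations $\rho_1,\rho_2,\rho_3$) is the glide reflection $g$, which lies in $\mathrm{Aut}_{ev}(\widetilde{T})$ if and only if it is a simplicial automorphism.

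For the non-arithmetic case, I would argue exactly as in Case 1 of Theorem \ref{thm:OPHS}. An orientation-reversing hidden symmetry $h\in C(\Gamma)\setminus N(\Gamma)$ preserves the canonical triangulation $\widetilde{\mathcal{T}}$ and, after composing with an element of $N(\Gamma)$ so that it fixes $\infty$, restricts to an element of $\mathrm{Aut}_{ev}(\widetilde{T})$. It therefore suffices to show every orientation-reversing element of $\mathrm{Aut}_{ev}(\widetilde{T})$ comes from a symmetry of $M$. The only new generator to handle beyond $\rho_1,\rho_2,\rho_3$ is $g$. If $g$ is simplicial (hence in $\mathrm{Aut}_{ev}(\widetilde{T})$), then by the same reasoning as before $\Omega$ must be palindromic, and the horizontal strip symmetry about $x=\tfrac{1}{2}$ forces $n$ to be even. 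By Proposition \ref{syms}, a palindromic word with $n$ even yields precisely the orientation-reversing symmetry $\sigma_3$ acting as $g$ (up to $\rho_1,\rho_2$ and deck transformations). Thus $g$ already comes from a genuine symmetry of $M$, so $C(\Gamma)=N(\Gamma)$ and there are no orientation-reversing hidden symmetries. Combining this with Theorem \ref{thm:OPHS} establishes Theorem \ref{thm:main1}.

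For the arithmetic cases I would proceed case by case on $\Omega\in\{RL,RLR,R^2L^2,RL^2R\}$. Since $\Omega=RL^2R$ is not excluded from Lemma \ref{SH_lemma} and its corollaries, the non-arithmetic argument applies verbatim to the $6_3^2$ link, giving no \emph{detectable} orientation-reversing hidden symmetry. For the figure-eight, Whitehead, and $6_2^2$ complements, I would exhibit an explicit order-$2$ orientation-reversing element: using the metrically-correct tilings of Figure \ref{fig:tilings} (established in Theorem \ref{thm:OPHS}), I would compose the orientation-preserving rotation $\rho_v$ already found there with a suitable reflection of $\widetilde{T}$, and verify as in Case 2 of Theorem \ref{thm:OPHS} that the resulting isometry of $\widetilde{T}$ extends to an orientation-reversing isometry of $\widetilde{\mathcal{T}}$ that is not a symmetry of $M$.

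The main obstacle I anticipate is the arithmetic bookkeeping for the three small links: one must check that the candidate orientation-reversing isometry of $\widetilde{T}$ genuinely extends over $\widetilde{\mathcal{T}}$ (that it respects edge valences across all tetrahedra, not just on the cusp), and that it is honestly \emph{hidden}, i.e.\ not already realized by an orientation-reversing element of $N(\Gamma)$. Care is needed because these amphichiral-type links do have orientation-reversing \emph{symmetries}, so the new element must be distinguished from those; the cleanest route is to track its action on the vertex set of $\widetilde{T}$ relative to the symmetry action described in Proposition \ref{syms}.
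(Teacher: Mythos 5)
Your proposal is correct and follows essentially the same route as the paper: the non-arithmetic case reduces via Theorem \ref{SH_implies_iso} to checking that the glide reflection $g$, when simplicial, forces $\Omega$ palindromic with $n$ even and hence equals the symmetry $\sigma_3$ of Proposition \ref{syms}; the $6_3^2$ case reuses that argument since $RL^2R$ is not excluded from Lemma \ref{SH_lemma}; and the three remaining arithmetic links are handled by exhibiting an explicit orientation-reversing isometry of the metrically correct tilings (the paper uses a reflection across an edge of $\widetilde{T}$, extending to a reflection across the corresponding face of $\widetilde{\mathcal{T}}$, rather than your composition of $\rho_v$ with a reflection, but this is the same idea). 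Your closing caution — checking the element extends over $\widetilde{\mathcal{T}}$ and is not already in $N(\Gamma)$ given that these links do have orientation-reversing symmetries — is exactly the care the paper's terse "analogous" case requires.
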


\begin{proof}
\textbf{Case 1: M is non-arithmetic.} The proof will be analogous to the orientation preserving case. As in that case, we need only show that any $h\in\mathrm{Aut}_{ev}(\widetilde{T})$ is in fact a symmetry of $M$. By Theorem \ref{SH_implies_iso}, $h$ must be a composition of $\rho_1,\rho_2,\rho_3$, and $g$, where $\rho_1$, $\rho_2$, and $\rho_3$ are the rotations by $\pi$ about $(1,1)$, $(2,1)$, and $(\frac{1}{2},1)$, respectively, and $g$ is the glide reflection given by the composition of $r_{\frac{1}{2}}$ with $(x,y)\mapsto (x,y+1)$. If $g\notin \mathrm{Aut}_{ev}(\widetilde{T})$, then $\mathrm{Aut}_{ev}(\widetilde{T})=\mathrm{Aut}_{ev}^+(\widetilde{T})$, and we are done. If $g\in\mathrm{Aut}_{ev}(\widetilde{T})$, then it is clear from the construction of $\widetilde{T}$ that we must have $\Omega_c=L$, and $\Omega$ must be palindromic. In this case, though, $g$ corresponds to the symmetry $\sigma_3$ in the notation of Proposition \ref{syms}, so the non-arithmetic case is proved.

\textbf{Case 2: M is arithmetic.} The proof is analogous to the orientation preserving case.

\end{proof}


\subsection{Irregular Coverings By Hyperbolic $2$-Bridge Link Complements}
\label{subsec:irregular}
	
Theorem \ref{thm:OPHS} and Theorem \ref{thm:ORHS} give us the following corollary about irregular coverings of $3$-manifolds.  
	
\begin{cor}
\label{cor:coverings}
Let $N$ be any hyperbolic $2$-bridge link complement. If $N$ is non-arithmetic, then $N$ does not irregularly cover any hyperbolic $3$-orbifolds (orientable or non-orientable). If $N$ is arithmetic, then $N$ does not irregulary cover any (orientable) hyperbolic $3$-manifolds. 
\end{cor}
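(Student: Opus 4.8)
The plan is to reduce the corollary to the hidden-symmetry results already established in Theorems \ref{thm:OPHS} and \ref{thm:ORHS}, supplemented by a volume estimate in the arithmetic cases. The crucial elementary observation is that a manifold with \emph{no} hidden symmetries admits no irregular orbifold covers at all: if $N=\h^3/\Gamma$ covers an orbifold $\mathcal{O}=\h^3/\Gamma'$, then $\Gamma$ has finite index in $\Gamma'$, so each element of $\Gamma'$ conjugates $\Gamma$ to a commensurable subgroup and hence lies in $C(\Gamma)$, giving $\Gamma'\subseteq C(\Gamma)$. If in addition $C(\Gamma)=N(\Gamma)$, then every element of $\Gamma'$ normalizes $\Gamma$, so $\Gamma$ is normal in $\Gamma'$ and the cover is regular. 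So the first step is simply to record this implication.

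For the non-arithmetic case I would invoke Theorems \ref{thm:OPHS} and \ref{thm:ORHS}, which together assert that a non-arithmetic hyperbolic $2$-bridge link complement has neither orientation-preserving nor orientation-reversing hidden symmetries, i.e. the full commensurator satisfies $C(\Gamma)=N(\Gamma)$. By the reduction above, $N$ covers every hyperbolic $3$-orbifold regularly, and so it admits no irregular cover of any hyperbolic $3$-orbifold. Because orientation-reversing elements are included in $C(\Gamma)$ and $N(\Gamma)$, this conclusion covers non-orientable targets as well, exactly as claimed.

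For the arithmetic case the normalizer argument is unavailable, since by Margulis the commensurator of an arithmetic $\Gamma$ is dense in $\mathrm{Isom}(\h^3)$ and $C(\Gamma)\ne N(\Gamma)$; here I would argue metrically. Suppose $N=\h^3/\Gamma$ irregularly covers an orientable hyperbolic $3$-manifold $\mathcal{O}=\h^3/\Gamma'$. An index-two subgroup is always normal, so an irregular cover has degree $d\ge 3$, whence $\mathrm{vol}(\mathcal{O})=\mathrm{vol}(N)/d\le\mathrm{vol}(N)/3$. By the theorem of Cao--Meyerhoff, the minimal volume of an orientable cusped hyperbolic $3$-manifold is $2V_3$, where $V_3$ is the volume of the regular ideal tetrahedron, attained only by the figure-eight complement and its sibling. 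It therefore suffices to check that each of the four arithmetic $2$-bridge link complements has volume strictly less than $6V_3=3\cdot 2V_3\approx 6.09$; this is immediate from standard tables (e.g. via SnapPy), the four volumes being approximately $2.03$, $3.66$, $4.06$, and $5.33$. Consequently none of these $N$ can cover any orientable manifold with degree $\ge 3$, and since degree-two covers are regular, $N$ admits no irregular cover of an orientable hyperbolic $3$-manifold.

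The arithmetic case is where the real difficulty lies, and two points demand care. First, the restriction to \emph{orientable} targets is genuinely necessary: the smallest cusped hyperbolic $3$-manifold without an orientability hypothesis is the Gieseking manifold of volume $V_3$, so ruling out non-orientable targets would require $\mathrm{vol}(N)<3V_3\approx 3.04$, which fails for every link except the figure-eight; this is precisely why the theorem only asserts the manifold statement in the arithmetic case. Second, one needs the \emph{strict} inequality $\mathrm{vol}(N)<6V_3$, not merely $\le$, because a degree-three cover of a volume-$2V_3$ manifold would otherwise be a borderline possibility. For the $6_2^2$ and $6_3^2$ complements the crude bound $\mathrm{vol}(N)\le 6V_3$ coming from the tetrahedron count of the canonical triangulation only gives $\le$, so I would instead rely on the explicit, strictly submaximal volumes of the two links to close the argument.
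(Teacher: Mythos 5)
Your proposal is correct and follows essentially the same route as the paper: the non-arithmetic case is reduced to Theorems \ref{thm:OPHS} and \ref{thm:ORHS} (no hidden symmetries, so $C(\Gamma)=N(\Gamma)$ and every orbifold cover is regular), and the arithmetic case is handled by noting an irregular cover has degree at least $3$ and invoking the Cao--Meyerhoff volume bound against the explicit volumes of the four arithmetic links. Your write-up merely makes explicit two points the paper leaves implicit --- that $\Gamma'\subseteq C(\Gamma)$ for any finite-index supergroup $\Gamma'$, and that the orientable-target restriction in the arithmetic case is forced by the volume method --- but the argument is the same.
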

	
\begin{proof}
By Theorem \ref{thm:OPHS} and Theorem \ref{thm:ORHS}, any non-arithmetic hyperbolic $2$-bridge link complement $N$ does not have any hidden symmetries (orientation-preserving or orientation-reversing). Thus, if any such $N$ covers a hyperbolic $3$-orbifold, it must be a regular cover. 
		
If $N$ is arithmetic, then $N$ is the complement of either the figure-eight knot, the Whitehead link, the $6_{2}^{2}$ link, or the $6_{3}^{2}$ link. If $N$ irregularly covers some hyperbolic $3$-manifold $N'$, then it must be at least a degree $3$ covering. Here, we get a volume contradiction. Cao--Meyerhoff in \cite{CaMe} showed that the figure-eight knot complement and its sister are the orientable cusped hyperbolic $3$-manifold of minimal volume, with volume $\geq 2.029$. Thus, $vol(N') \geq 2.029$, and so, $vol(N) \geq 3(2.029) = 6.087$. However, the volumes of any of the four arithmetic hyperbolic $2$-bridge link complements are strictly smaller than $6.087$. Thus, we can't have any such irregular coverings in the arithmetic case.
\end{proof}

In \cite{BoWe}, Boileau--Weidmann give a characterization of $3$-manifolds that admit a nontrivial JSJ-decomposition and whose fundamental groups are generated by two elements. Their work shows that there are four possibilities for such manifolds, one of which is that the hyperbolic part of the JSJ decomposition admits a finite-sheeted irregular covering by a hyperbolic $2$-bridge link complement. Corollary \ref{cor:coverings} immediately eliminates this possibility, giving the following revised characterization of such manifolds. In the following corollary, $D$ stands for a disk, $A$ for an annulus, and $Mb$ for a M\"{o}bius band. For an orbifold, cone points are listed in parentheses after the topological type of the orbifold is given.     
	
\begin{cor}
\label{cor:JSJrank}
Let $M$ be a compact, orientable, irreducible $3$-manifold with rank($\pi_{1}(M)) = 2$. If $M$ has a non-trivial JSJ-decomposition, then one of the following holds:
		
1. $M$ has Heegaard genus $2$.
		
2. $M = S \cup_{T} H$ where $S$ is a Seifert manifold with basis $D(p,q)$ or $A(p)$, $H$ is a hyperbolic manifold and $\pi_{1}(H)$ is generated by a pair of elements with a single parabolic element. The gluing map identifies the fiber of $S$ with the curve corresponding to the parabolic generator of $\pi_{1}(H)$. 
		
3. $M= S_{1} \cup_{T} S_{2}$ where $S_{1}$ is a Seifert manifold over $Mb$ or $Mb(p)$ and $S_{2}$ is a Seifert manifold over $D(2,2l+1)$. The gluing map identifies the fiber of $S_{1}$ with a curve on the boundary of $S_{2}$ that has intersection number one with the fiber of $S_{2}$.

\end{cor}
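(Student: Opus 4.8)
The plan is to reduce the statement to the classification theorem of Boileau--Weidmann \cite{BoWe} and then delete one of its cases using Corollary \ref{cor:coverings}. Boileau--Weidmann prove that a compact, orientable, irreducible $3$-manifold $M$ with $\mathrm{rank}(\pi_1(M))=2$ and non-trivial JSJ-decomposition must fall into one of four classes: the three described in items (1)--(3) of the statement, together with a fourth possibility in which some hyperbolic piece $H$ of the JSJ-decomposition of $M$ admits a finite-sheeted \emph{irregular} covering by a hyperbolic $2$-bridge link complement. So the first step is simply to quote this theorem, organizing its output so that three of its cases line up with (1)--(3).

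The essential step is to show that the fourth case never occurs. Each geometric piece of a JSJ-decomposition is itself a $3$-manifold, so $H$ is a hyperbolic $3$-manifold; moreover, since $M$ is orientable, $H$ is orientable as well. Suppose, for contradiction, that a hyperbolic $2$-bridge link complement $N$ covers $H$ by a finite-sheeted irregular covering. If $N$ is non-arithmetic, then by Corollary \ref{cor:coverings} it cannot irregularly cover any hyperbolic $3$-orbifold, and in particular it cannot irregularly cover the hyperbolic $3$-manifold $H$. If $N$ is arithmetic, then again by Corollary \ref{cor:coverings} it cannot irregularly cover any orientable hyperbolic $3$-manifold, and $H$ is such a manifold. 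Either way we reach a contradiction, so the fourth Boileau--Weidmann case is impossible.

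Having eliminated that case, the only remaining possibilities are exactly (1)--(3), which completes the proof. I expect the one point requiring genuine care to be the orientability bookkeeping: Corollary \ref{cor:coverings} gives a stronger conclusion in the non-arithmetic case (no irregular cover of any orbifold) than in the arithmetic case (no irregular cover of an \emph{orientable} manifold), so one must confirm that the relevant target $H$ is an orientable manifold before invoking the arithmetic clause. This follows immediately from orientability of $M$ together with the fact that the JSJ-pieces of an orientable manifold are orientable. The substantive mathematical content---ruling out hidden symmetries, and hence irregular covers---has already been carried out in Theorems \ref{thm:OPHS} and \ref{thm:ORHS} and packaged in Corollary \ref{cor:coverings}, so no further three-dimensional analysis is needed here; the remaining work is purely a matter of matching cases.
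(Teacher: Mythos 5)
Your proposal is correct and matches the paper's argument exactly: the paper likewise quotes the four-case characterization of Boileau--Weidmann and eliminates the fourth case (a hyperbolic JSJ piece irregularly covered by a $2$-bridge link complement) by invoking Corollary \ref{cor:coverings}. Your additional remark on orientability of the JSJ pieces is a reasonable bit of bookkeeping that the paper leaves implicit.
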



\section{Commensurability of $2$-bridge link complements}
\label{sec:comm}

In this section, we show that there is only one pair of commensurable hyperbolic $2$-bridge link complements. We accomplish this by analyzing the cusp of the unique minimal orbifold in the commensurability class of a non-arithmetic hyperbolic $2$-bridge link complement. 

Let $M = \mathbb{S}^{3} \setminus K(\Omega) = \mathbb{H}^{3} / \Gamma$ be any non-arithmetic hyperbolic $2$-bridge link complement. By a theorem of Margulis \cite{Mar}, there exists a unique minimal (orientable) orbifold in the commensurability class of $M$, specifically, $\mathcal{O}^+ = \mathbb{H}^{3}/C^{+}(\Gamma)$. By Theorem \ref{thm:OPHS} we know that $M$ admits no hidden symmetries, and so, $C^{+}(\Gamma) = N^{+}(\Gamma)$. Since $N^{+}(\Gamma) /\Gamma = Sym^{+}(M)$, we only have to quotient $M$ by its orientation-preserving symmetries to obtain $\mathcal{O}^+$. 

We will analyze the commensurability class of $M$ by considering the cusp of $\mathcal{O}^+$. Recall that every $2$-bridge link is either a knot or a link with two components. If $K$ has two components, then there always exists a symmetry exchanging those components; see Section \ref{sec:sym}. Thus, the orbifold $\mathcal{O}^+$ admits a single cusp, $C$. If we quotient the cusp(s) of $M$ along with the cusp triangulation $T$ by the symmetries of $M$, then we obtain  the cusp $C$ of $\mathcal{O}^+$, along with a canonical cellulation, $T_{C}$. Technically, $T_{C}$ is not a triangulation, but just a quotient of a triangulation (hence we call it a cellulation). If $M$ and $M'$ are commensurable, then their corresponding minimal orbifolds, must admit isometric cusps that have identical cusp triangulations. In this case, we say that the corresponding cusp cellulations, $T_{C}$ and $T_{C'}$, are \textit{equivalent}. We wish to determine when these cusps are equivalent. The following two lemmas takes care of this classification.

\begin{lemma}
\label{lemma:cusp1}
Let $M = \mathbb{S}^{3} \setminus K(\Omega)$ be a non-arithmetic hyperbolic $2$-bridge link complement. Suppose $\Omega$ is not palindromic or $n$ is even. Then $C \cong S^{2}(2,2,2,2)$ and $T_{C}$ determines the word $\Omega$ up to inversion and switching Ls and Rs. 
\end{lemma}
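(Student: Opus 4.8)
The plan is to treat the two assertions separately, first identifying the orbifold type of the cusp and then reading the word off of the cell structure. Since $M$ is non-arithmetic, Theorem~\ref{thm:OPHS} gives $C^{+}(\Gamma)=N^{+}(\Gamma)$, so $\mathcal{O}^{+}=M/Sym^{+}(M)$ and the cusp $C$ is the quotient of the cusp cross-section(s) of $M$ by the induced action of $Sym^{+}(M)$. Under the hypothesis that $\Omega$ is not palindromic or $n$ is even, Theorem~\ref{thm:Symclass} gives $Sym^{+}(M)=\langle\sigma_{1},\sigma_{2}\rangle\cong\Z_{2}\oplus\Z_{2}$, and I would read the action off of Proposition~\ref{syms}. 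In the knot case the single cusp torus is $E=\R^{2}/\langle(4,0),(0,2)\rangle$, and $\sigma_{1},\sigma_{2}$ act as the $\pi$-rotations about $(1,1)$ and $(2,1)$; their product is the order-two translation by $(2,0)$, so $E/\langle\sigma_{1}\sigma_{2}\rangle$ is the torus $\R^{2}/\langle(2,0),(0,2)\rangle$ on which $\sigma_{1}$ descends to a $\pi$-rotation. In the two-component case the two cusp tori are each $\R^{2}/\langle(2,0),(0,2)\rangle$, with $\sigma_{1}$ a $\pi$-rotation of each and $\sigma_{2}$ exchanging them, so the quotient is again a single torus modulo a $\pi$-rotation. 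A $\pi$-rotation of a torus has exactly four fixed points, so in both cases the quotient is a sphere with four order-two cone points, i.e.\ $C\cong S^{2}(2,2,2,2)$.

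For the second assertion I would pass to universal covers. By construction $T_{C}$ is the quotient of the cusp triangulation $T$ of $M$ by $Sym^{+}(M)$, and pulling back along the orbifold universal covering $\R^{2}\to C=S^{2}(2,2,2,2)$ (whose deck group is the group $\langle\sigma_{1},(2,0),(0,2)\rangle$ appearing above) recovers precisely the labelled triangulation $\widetilde{T}$ of Lemma~\ref{sum_lem}, together with its edge valences. Consequently an equivalence $T_{C}\cong T_{C'}$ of cusp cellulations lifts to a valence-preserving simplicial isomorphism $\widetilde{T}\cong\widetilde{T}'$, and it suffices to show that the labelled triangulation $\widetilde{T}$ determines $\Omega$ up to inversion and switching Ls and Rs.

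To read $\Omega$ off of $\widetilde{T}$ I would invoke the rigidity already established: since $M$ is non-arithmetic we have $\Omega\notin\{RL,R^{2}L^{2},RLR\}$, so by Lemma~\ref{SH_lemma} and its corollary the clasping triangles, meridional edges, and horizontal strips of $\widetilde{T}$ are intrinsically determined. Fixing a horizontal strip together with a bounding meridional edge singles out a fundamental rectangle of the form in Figure~\ref{fig:T_alg1}, and the sequence of edge valences along the strip recovers the exponents $\alpha_{1},\dots,\alpha_{n}$ through the valence formula of Lemma~\ref{sum_lem}(d); the alternation of the two triangle types along the strip then reconstitutes the word. There are exactly two genuine ambiguities: the choice of which bounding meridional line to read from and which direction to traverse the strip, which amounts to inverting $\Omega$; and the identification of the two triangle types with the letters $R$ and $L$, which amounts to switching Ls and Rs. By Theorem~\ref{SH_implies_iso} the group $\mathrm{Aut}_{ev}(\widetilde{T})$ is generated (modulo deck transformations) by maps realizing precisely these two operations, so no further ambiguity can arise.

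I expect the main obstacle to lie in the final step: verifying that the valence sequence along a horizontal strip determines the exponent sequence $(\alpha_{i})$ injectively — this is where the case distinctions of Lemma~\ref{sum_lem}(d) and the special behaviour of valences near the clasping triangles must be handled carefully — and confirming that the reading ambiguities contribute exactly ``inversion'' and ``$L\leftrightarrow R$'' and nothing more. By contrast, the determination of the orbifold type $S^{2}(2,2,2,2)$ is comparatively routine once the action in Proposition~\ref{syms} is in hand.
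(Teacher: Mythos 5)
Your identification of $C\cong S^{2}(2,2,2,2)$ is correct and is essentially the paper's own argument: quotient the torus by $\sigma_{1}\sigma_{2}$ (knot case) or by the cusp-exchange $\sigma_{2}$ (link case), then by the $\pi$-rotation $\sigma_{1}$, exactly as in Proposition~\ref{syms}. For the second assertion, however, you diverge from the paper: the paper stays on the quotient and associates to $T_{C}$ a labelled dual segment $l_{C}$ (endpoints at the centers of the two clasping triangles of the fundamental domain, one vertex per triangle in the top half, labels $L/R$), argues that $l_{C}$ and $T_{C}$ determine each other, and then observes that a labelled segment admits only two equivalences -- endpoint-preserving or endpoint-swapping -- yielding $\Omega$ up to inversion and switching of letters. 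You instead lift an equivalence $T_{C}\cong T_{C'}$ to a simplicial isomorphism $\widetilde{T}\to\widetilde{T}'$ and try to conclude via Lemma~\ref{SH_lemma}, its corollary, and Theorem~\ref{SH_implies_iso}.

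Here is the gap. Those three results concern $\mathrm{Aut}_{ev}(\widetilde{T})$, i.e.\ self-maps of a \emph{single} fixed triangulation; they assert that automorphisms preserve clasping triangles, meridional edges, and horizontal strips, which is strictly weaker than your gloss that these features are ``intrinsically determined.'' What your argument actually requires is the cross-word statement: any simplicial isomorphism from $\widetilde{T}(\Omega)$ to $\widetilde{T}(\Omega')$, for two possibly different words, carries clasping triangles to clasping triangles and strips to strips, after which matching valence sequences force $\Omega'\in\{\Omega^{\pm 1}\}$ up to switching $L$s and $R$s. That cross-word statement is precisely the content of the lemma being proved, and it does not follow formally from the quoted results; establishing it means redoing the case analysis of Lemma~\ref{SH_lemma} with the valence formulas of Lemma~\ref{sum_lem}(d) applied to \emph{both} words simultaneously, including mixed cases where one word has the exceptional form $R^{k}L^{m}$ or $RL^{m}R^{k}$ and the other does not. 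You explicitly flag this step as the ``main obstacle'' but do not carry it out, so the proposal assumes what is to be shown. A secondary unjustified step: you claim the lifted isomorphism preserves \emph{edge} valences, but edge valence is defined through the edge/vertex correspondence of Lemma~\ref{sum_lem}(c), which an abstract simplicial isomorphism need not respect a priori -- this is exactly the point the paper must argue carefully in the proof of Theorem~\ref{SH_implies_iso}. This particular issue can be sidestepped by working only with vertex valences (which any simplicial isomorphism preserves, and which suffice for the strip-reading argument), but the cross-word gap above remains.
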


\begin{proof}
By Theorem \ref{thm:Symclass}, $Sym^{+}(M) \cong Z_{2} \oplus Z_{2}$, and Proposition \ref{syms} tells us exactly how $Sym^{+}(M)$ acts on $T$ and $\widetilde{T}$. First, assume $K(\Omega)$ is a knot. Here, we choose the rectangle $\left[0,4\right] \times \left[0,2\right]$ in $\widetilde{T}$ as a fundamental domain for the torus $T$. In this case, $\sigma_{1} \circ \sigma_{2}$ acts as a translation of $\widetilde{T}$ by $(x,y) \rightarrow (x+2, y)$.  When we quotient our fundamental domain by the symmetry $\sigma_{1} \circ \sigma_{2}$, we produce a fundamental domain for a torus given by the rectangle $\left[0,2\right] \times \left[0,2\right]$, with opposite sides identified. If $K(\Omega)$ is a link with two components, then our fundamental domain for $T$ is given by two copies of $\left[0,2\right] \times \left[0,2\right]$. When we quotient by $\sigma_{2}$, we just exchange the cusps. This again produces a fundamental domain for a (single) torus of the form $\left[0,2\right] \times \left[0,2\right]$ in $\widetilde{T}$. In either case (a knot or a two component link), we just need to quotient by $\sigma_{1}$, which acts as a rotation about $(1,1)$, to obtain $C$ along with $T_{C}$. This gives us a fundamental domain of the form $\left[0,1\right] \times \left[0,2\right]$, with identifications given in Figure \ref{fig:orbifoldcusp1}. We can see that this resulting cusp is $S^{2}(2,2,2,2)$. 

\begin{figure}
\includegraphics[scale=0.7]{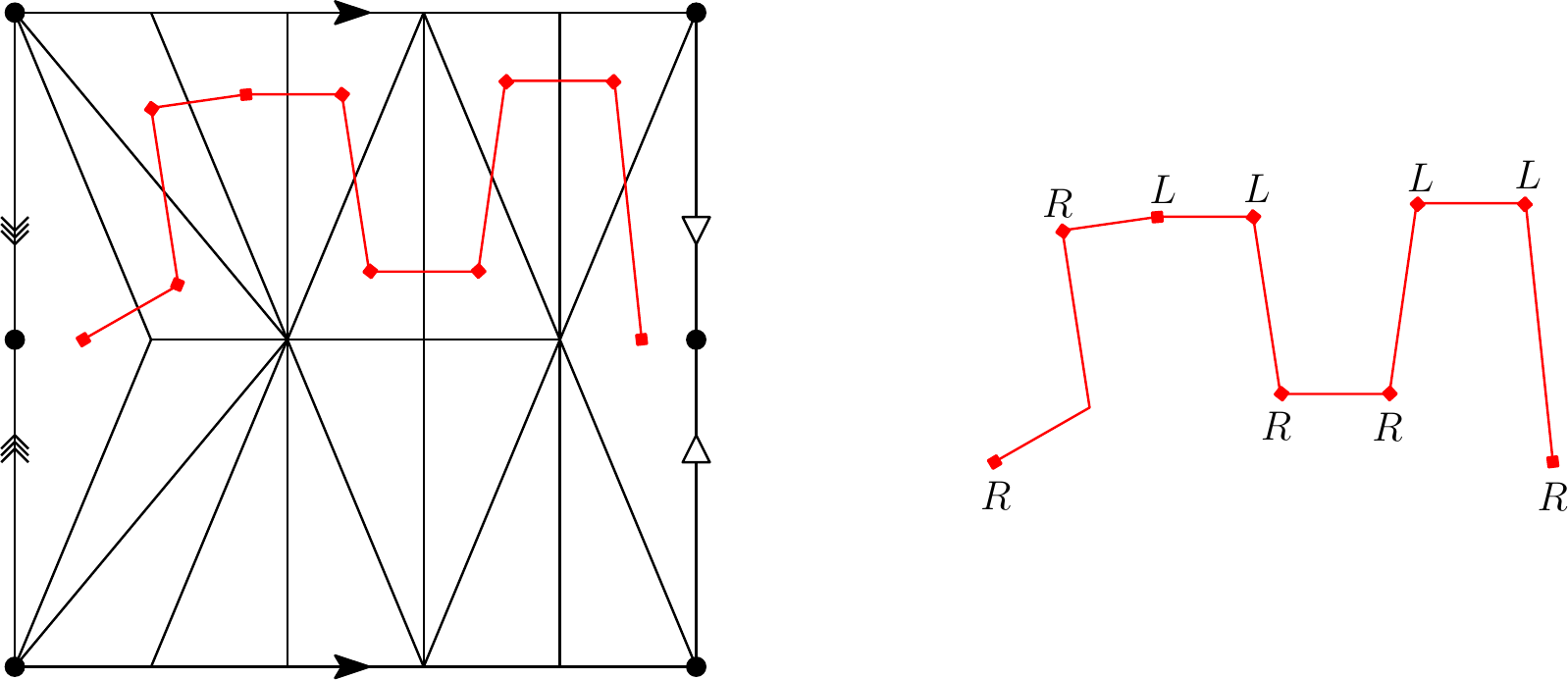}
\caption{The cusp triangulation $T_{C}$ for the word $\Omega = R^{2}L^{3}R^{2}L^{2}R$. The order two singularities are marked by solid black circles. The red line segment gives $l_{C}$.}
\label{fig:orbifoldcusp1}
\end{figure}

To each such $T_{C}$ we associate a labeled line segment, $l_{C}$, in the following manner depicted in Figure \ref{fig:orbifoldcusp1}. The two  endpoints of this line segment come from vertices placed in the centers of the two clasping triangles of the fundamental domain of $T_{C}$. We also place a vertex in the center of each triangle in the top half of the triangulation of the fundamental domain for $T_{C}$. We connect two vertices by an edge if and only if the corresponding triangles in $T_{C}$ share an edge. We label each vertex of $l_{C}$ (including the endpoints) by $L$ or $R$ corresponding to the label of the triangle in $T_{C}$. We say that $l_{C}$ is equivalent to another labeled line segment $l_{C'}$ if there exists a simplicial homeomorphism between the two that preserves labelings or switches Ls and Rs between labelings. 

Now, $T_{C}$ is equivalent to $T_{C'}$ if and only if $l_{C}$ is equivalent to $l_{C'}$. This holds because $l_{C}$ tells you exactly how to build $T_{C}$ and vice versa. However, there are only two possibilities for how $l_{C}$ can be equivalent to $l_{C'}$: either the left endpoint maps to the left endpoint, or the left endpoint maps to the right endpoint. In the first case, $\Omega$ must be the same as $\Omega'$. In the second case, $\Omega'$ must be an inversion of $\Omega$.    
\end{proof}

\begin{lemma}
\label{lemma:cusp2}
Let $M = \mathbb{S}^{3} \setminus K(\Omega)$ be a non-arithmetic hyperbolic $2$-bridge link complement. Suppose $\Omega$ is palindromic and $n$ is odd. Then $C \cong S^{2}(2,2,2,2)$ and $T_{C}$ determines the word $\Omega$ up to inversion and switching Ls and Rs. 
\end{lemma}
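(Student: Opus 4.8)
The plan is to follow the template of Lemma \ref{lemma:cusp1}, adding in the extra orientation-preserving symmetry $\sigma_3$ that the palindromic hypothesis forces. Recall from the discussion preceding the lemmas that, since $M$ is non-arithmetic and hence (by Theorem \ref{thm:OPHS}) has no hidden symmetries, $C^+(\Gamma)=N^+(\Gamma)$ and the cusp cellulation $T_C$ is exactly the quotient of the cusp triangulation $T$ by $Sym^+(M)$. By Theorem \ref{thm:Symclass}, when $\Omega$ is palindromic with $n$ odd we have $Sym^+(M)\cong D_4$ (if $\alpha_{(n+1)/2}$ is odd) or $Sym^+(M)\cong Z_2\oplus Z_2\oplus Z_2$ (if $\alpha_{(n+1)/2}$ is even); in either case the new generator beyond $\langle \sigma_1,\sigma_2\rangle$ is $\sigma_3$, which by Proposition \ref{syms} acts on $\widetilde{T}$ as the rotation by $\pi$ about $(\tfrac12,1)$. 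The one computation I would record at the outset is that $\sigma_1\circ\sigma_3$ is the translation $(x,y)\mapsto(x+1,y)$, and that this is \emph{not} a deck transformation (the horizontal deck generator is $(x,y)\mapsto(x+\tfrac{4}{\epsilon},y)$ by Lemma \ref{sum_lem}). This extra half-period translation is precisely what distinguishes this case from Lemma \ref{lemma:cusp1}.

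To identify $C$, I would first quotient exactly as in Lemma \ref{lemma:cusp1}: using $\sigma_1\circ\sigma_2$ (for a knot) or $\sigma_2$ (for a two-component link) reduces $\widetilde{T}$ to a single torus with translation lattice $\langle(2,0),(0,2)\rangle$. Adjoining the translation $(1,0)=\sigma_1\circ\sigma_3$ refines this lattice to $\langle(1,0),(0,2)\rangle$, while the remaining generator $\sigma_1$ is a $\pi$-rotation. Thus the full group acting on $\R^2$ is a wallpaper group of type $p2$, so its quotient is $S^2(2,2,2,2)$: the four order-two cone points are the images of the four half-lattice points, and since every nontrivial rotation in a $p2$ group is a product of $\pi$-rotations (hence again a $\pi$-rotation or a translation), no cone point of order greater than two can appear, independent of the parity of $\alpha_{(n+1)/2}$. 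The resulting fundamental domain is exactly half of the $[0,1]\times[0,2]$ rectangle appearing in Lemma \ref{lemma:cusp1}.

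To recover $\Omega$ I would again build the labelled line segment $l_C$ dual to the triangles of $T_C$, as in Lemma \ref{lemma:cusp1} and Figure \ref{fig:orbifoldcusp1}. The difference is that $\sigma_3$ exchanges the maxima and minima of $K(\Omega)$ (Proposition \ref{syms}), hence identifies the $S_1$-clasping triangle with the $S_c$-clasping triangle and folds the word about its middle syllable. Consequently $l_C$ now joins the single surviving clasping triangle to the fold region at the middle syllable (the fixed locus of $\sigma_3$), and reading its labels recovers the half-word $\alpha_1,\dots,\alpha_{(n+1)/2}$; palindromicity then reconstitutes all of $\Omega$. Exactly as in Lemma \ref{lemma:cusp1}, the only freedom in a label-preserving (or $L\!\leftrightarrow\!R$-switching) simplicial homeomorphism $l_C\to l_{C'}$ is which end of the segment the clasping triangle is sent to, which yields $\Omega$ up to inversion and the $L\!\leftrightarrow\!R$ switch (and for a palindromic word inversion is vacuous, so in effect up to the mirror switch).

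The step I expect to be the main obstacle is the local analysis at the fold, i.e.\ at the middle syllable $\alpha_{(n+1)/2}$. The combinatorics of $T_C$ around the fixed point of $\sigma_3$ differ according to whether $\alpha_{(n+1)/2}$ is odd or even (this is exactly the dichotomy producing $D_4$ versus $Z_2\oplus Z_2\oplus Z_2$ in Theorem \ref{thm:Symclass}), and I would need to verify in both subcases that the cone point there is genuinely of order two, that $l_C$ terminates unambiguously at the fold, and that no extra collapsing of triangles occurs which could cause two distinct half-words to yield the same $T_C$. Establishing that the folding is faithful---so that $l_C$ truly determines $\alpha_1,\dots,\alpha_{(n+1)/2}$---is where the work must be done carefully, by appealing to the edge-valence data and edge/vertex correspondences of Lemma \ref{sum_lem} just as in the preceding lemma.
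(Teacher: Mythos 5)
Your proposal is correct and follows essentially the same route as the paper: quotient $\widetilde{T}$ by $\sigma_1,\sigma_2$ as in Lemma \ref{lemma:cusp1}, then by $\sigma_3$ (your reformulation via the translation $\sigma_1\circ\sigma_3$ and the resulting $p2$ wallpaper group is just a cleaner packaging of the same quotient), and then recover $\Omega$ from the labelled dual segment $l_C$ running from the clasping triangle to the fold at the middle syllable --- exactly the argument the paper sketches and ``leaves for the reader,'' including the odd/even dichotomy for $\alpha_{(n+1)/2}$ that you flag. The only slight imprecision is your claim that the freedom in a homeomorphism $l_C\to l_{C'}$ is ``which end the clasping triangle is sent to'': here the two ends (clasping triangle versus fold) are distinguishable, so there is even less freedom than in Lemma \ref{lemma:cusp1}, which only strengthens the conclusion.
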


\begin{proof}
By Theorem \ref{thm:Symclass}, either $Sym^{+}(M) \cong Z_{2} \oplus Z_{2} \oplus Z_{2}$, or $Sym^{+}(M) \cong D_{4}$. Just as in the previous lemma, we can first quotient a fundamental domain for $T$ in $\widetilde{T}$ by the $Z_{2} \oplus Z_{2}$ subgroup of $Sym^{+}(M)$ to obtain a single $S^{2}(2,2,2,2)$ cusp. To obtain $C$ and $T_{C}$, we also quotient by the action of $\sigma_{3}$, which is a rotation about $(\frac{1}{2}, 1)$ in $\widetilde{T}$ by Proposition \ref{syms}; see Figure \ref{fig:orbifoldcusp2}.   

Similar to Lemma \ref{lemma:cusp1}, we can associate a marked line segment $l_{C}$ to each cusp $T_{C}$, as depicted in Figure \ref{fig:orbifoldcusp2}. Once again, we see that this marked line segment determines $T_{C}$ up to inversions and switching Ls and Rs. We leave the details for the reader.
\end{proof}

\begin{figure}[h]
\includegraphics[scale=0.7]{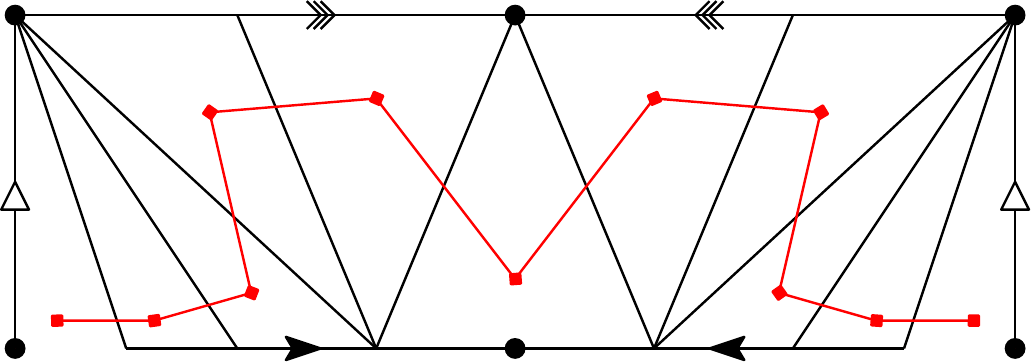}
\caption{The cusp triangulation $T_{C}$ for the word $\Omega = R^{3}L^{2}R^{}L^{2}R^{3}$. The order two singularities are marked by solid black circles. The red line segment gives $l_{C}$.}
\label{fig:orbifoldcusp2}
\end{figure}

\begin{cor}
\label{cor:cusp}
Let $M = \mathbb{S}^{3} \setminus K(\Omega)$ be a non-arithmetic hyperbolic $2$-bridge link complement. Then $C \cong S^{2}(2,2,2,2)$ and $T_{C}$ is determines the word $\Omega$ up to inversion and switching Ls and Rs.
\end{cor}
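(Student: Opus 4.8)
The plan is to deduce this corollary immediately by combining Lemma \ref{lemma:cusp1} and Lemma \ref{lemma:cusp2}, since between them they cover every possible word $\Omega$. First I would observe that the hypotheses of the two lemmas are exact logical complements: Lemma \ref{lemma:cusp1} assumes that $\Omega$ is not palindromic or $n$ is even, while Lemma \ref{lemma:cusp2} assumes that $\Omega$ is palindromic and $n$ is odd. Thus for any non-arithmetic hyperbolic $2$-bridge link complement $M = \mathbb{S}^{3} \setminus K(\Omega)$, exactly one of the two lemmas applies.

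Concretely, there are three mutually exclusive and exhaustive possibilities for $\Omega$: it is not palindromic; it is palindromic with $n$ even; or it is palindromic with $n$ odd. The first two cases are governed by Lemma \ref{lemma:cusp1}, whose hypothesis is precisely ``$\Omega$ not palindromic or $n$ even'', and the third by Lemma \ref{lemma:cusp2}. In each case the conclusion is the same, namely that the cusp $C$ of the minimal orientable orbifold $\mathcal{O}^+ = \mathbb{H}^{3}/C^{+}(\Gamma)$ is homeomorphic to $S^{2}(2,2,2,2)$ and that the induced cusp cellulation $T_{C}$ determines $\Omega$ up to inversion and switching $L$s and $R$s. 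Applying the appropriate lemma in each case gives the result.

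As this is a direct union of the two lemma statements, I expect no substantive obstacle; the only point to verify is the exhaustiveness of the case split, which is immediate from the complementarity of the two hypotheses. In particular, the recurring structural facts underlying both lemmas (that $Sym^{+}(M)$ contains the $Z_{2}\oplus Z_{2}$ subgroup generated by $\sigma_{1},\sigma_{2}$, whose quotient already yields a single $S^{2}(2,2,2,2)$ cusp, and that the marked line segment $l_{C}$ encodes $\Omega$) are supplied by Proposition \ref{syms} and the proofs of the two lemmas, so nothing new needs to be established here.
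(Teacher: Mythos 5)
Your case split is exhaustive, but the corollary does not follow from a ``direct union'' of the two lemmas, and the point you dismiss as requiring nothing new is exactly where the remaining content lies. Each of Lemma \ref{lemma:cusp1} and Lemma \ref{lemma:cusp2} proves a \emph{within-class} statement: its proof compares the marked segments $l_{C}$ and $l_{C'}$ of two cellulations that both arise under that lemma's hypothesis, so it only shows that $T_{C}$ distinguishes $\Omega$ from other words \emph{of the same type}. The corollary asserts more: if $\Omega$ satisfies the hypothesis of Lemma \ref{lemma:cusp1} (not palindromic, or $n$ even) while $\Omega'$ satisfies that of Lemma \ref{lemma:cusp2} (palindromic with $n$ odd), one must still rule out the possibility that $T_{C}$ and $T_{C'}$ are equivalent, even though $\Omega$ and $\Omega'$ certainly do not differ by inversion or by switching Ls and Rs. Neither lemma, as proved, addresses this cross-case comparison, so your argument has a genuine gap.

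The paper closes this gap by showing that the two types of cellulations can never be equivalent, using the placement of the order-$2$ singular points. In the cellulations produced under Lemma \ref{lemma:cusp1}, all singularities lie at vertices, and every singular vertex has valence $\neq 2$. In the cellulations produced under Lemma \ref{lemma:cusp2}, the additional quotient by $\sigma_{3}$ forces either a singular point that is not located at a vertex at all (when $\alpha_{\frac{n+1}{2}}$ is odd) or a singular vertex of valence $2$ (when $\alpha_{\frac{n+1}{2}}$ is even). Since these combinatorial features are preserved by any equivalence of cellulations, a cellulation of the first type is never equivalent to one of the second type, and only with this observation do the two lemmas combine to yield the corollary. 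Your proposal should be repaired by adding such a cross-case discriminating argument.
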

\begin{proof}
We claim that the two types of cusp cellulations coming from Lemma \ref{lemma:cusp1} and Lemma \ref{lemma:cusp2} can not be equivalent. First, note that the tiling $T_{C}$ for an $S^{2}(2,2,2,2)$ from Lemma \ref{lemma:cusp1} always has singularities located at vertices. Furthermore, any of these vertices with singularities have valence $\neq 2$. Now, the tiling coming from Lemma \ref{lemma:cusp2} either has a singularity that is not located at a vertex (this happens if $\alpha_{\frac{n+1}{2}}$ is odd) or it has a singularity located at a vertex of valence $2$ (this happens if $\alpha_{\frac{n+1}{2}}$ is even). Thus, these two types of cusp cellulations can not be equivalent, and so, the previous two lemmas imply that any such $T_{C}$ is determined by the word $\Omega$ up to inversion and switching Ls and Rs. 
\end{proof}

We can now prove our main theorem. 

\begin{thm}
\label{thm:comm}
The only commensurable hyperbolic $2$-bridge link complements are the figure-eight knot complement and the $6_{2}^{2}$ link complement. 
\end{thm}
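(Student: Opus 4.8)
The plan is to separate the argument using arithmeticity, which is a commensurability invariant: a non-arithmetic complement can never be commensurable with an arithmetic one, so it suffices to determine commensurability separately within the non-arithmetic family and within the (four-element) arithmetic family.

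For the non-arithmetic case I would show that distinct complements are never commensurable. Suppose $M=\mathbb{S}^3\setminus K(\Omega)$ and $M'=\mathbb{S}^3\setminus K(\Omega')$ are non-arithmetic and commensurable. By Margulis' theorem \cite{Mar} they share the unique minimal orientable orbifold $\mathcal{O}^+$ of their common commensurability class, and since neither admits hidden symmetries (Theorem \ref{thm:OPHS}) this $\mathcal{O}^+$ is obtained from each of $M$ and $M'$ by quotienting by orientation-preserving symmetries. As the canonical triangulation $\mathcal{T}$ descends to the canonical cellulation of $\mathcal{O}^+$, and the latter is an isometry invariant of $\mathcal{O}^+$, the single-cusped orbifold $\mathcal{O}^+$ carries a well-defined cusp cellulation; thus $T_C$ and $T_{C'}$ are equivalent. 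Corollary \ref{cor:cusp} then forces $\Omega$ and $\Omega'$ to coincide up to inversion and the interchange of $L$s and $R$s, so by Schubert's classification \cite{Schu} we get $M\cong M'$. Hence no two distinct non-arithmetic complements are commensurable, and any commensurable pair must consist of arithmetic complements.

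For the arithmetic case only the four complements of Section \ref{sec:background} remain, and I would separate their commensurability classes using the invariant trace field, which for a cusped arithmetic Kleinian group is a complete commensurability invariant (the invariant quaternion algebra being forced to be the matrix algebra $M_2$ over the imaginary quadratic invariant trace field; see \cite{MaRe}). The figure-eight knot complement and the $6_2^2$ link complement both have invariant trace field $\mathbb{Q}(\sqrt{-3})$, hence are commensurable; this is consistent with $\mathrm{vol}(6_2^2)=2\,\mathrm{vol}(4_1)$ and can be made explicit by exhibiting both as covers of the minimal orbifold in the commensurability class of $\mathrm{PGL}_2(\mathcal{O}_3)$. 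The Whitehead link complement has invariant trace field $\mathbb{Q}(\sqrt{-1})$, while the $6_3^2$ link complement has invariant trace field different from both $\mathbb{Q}(\sqrt{-1})$ and $\mathbb{Q}(\sqrt{-3})$; consequently each of these two lies alone, among the four, in its commensurability class. Combining the two cases yields that the figure-eight knot complement and the $6_2^2$ link complement form the only commensurable pair.

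The main obstacle is the arithmetic case rather than the non-arithmetic one. Because the four arithmetic complements do admit hidden symmetries, the cusp-cellulation rigidity of Corollary \ref{cor:cusp} no longer pins down the minimal orbifold, and one must instead fall back on arithmetic invariants; the crux is verifying that the $6_3^2$ link complement is \emph{not} commensurable with the Whitehead link complement, i.e., that their invariant trace fields genuinely differ, which is the one place requiring an explicit (though routine) trace-field computation. By contrast, the non-arithmetic case is an essentially formal consequence of Corollary \ref{cor:cusp} together with the uniqueness of the minimal orbifold.
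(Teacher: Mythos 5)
Your proposal is correct and follows essentially the same route as the paper: arithmeticity separates the two cases, the non-arithmetic case uses Margulis' unique minimal orbifold plus the absence of hidden symmetries and Corollary \ref{cor:cusp} to force isometry, and the arithmetic case is settled by invariant trace fields ($\mathbb{Q}(\sqrt{-3})$ for both the figure-eight and $6_2^2$ complements, $\mathbb{Q}(\sqrt{-1})$ for the Whitehead link, and $\mathbb{Q}(\sqrt{-7})$ for $6_3^2$, the value you left implicit). The only differences are cosmetic: the paper cites the trace fields as known values rather than flagging a computation, and it does not need your extra remarks about volumes or covers of the $\mathrm{PGL}_2(\mathcal{O}_3)$ orbifold.
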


\begin{proof}
It is a well known fact that cusped, arithmetic hyperbolic $3$-manifolds are commensurable if and only if they have the same invariant trace field; see \cite{MaRe} for details. The figure-eight knot complement and the $6_{2}^{2}$ link complement both have invariant trace field $\mathbb{Q}(\sqrt{-3})$, while the Whitehead link complement has $\mathbb{Q}(\sqrt{-1})$ and the $6_{3}^{2}$ link complement has $\mathbb{Q}(\sqrt{-7})$. Thus, among hyperbolic arithmetic $2$-bridge link complements, only the figure-eight knot complement and the $6_{2}^{2}$ link complement are commensurable. Now, a non-arithmetic hyperbolic $2$-bridge link complement can not be commensurable with an arithmetic hyperbolic $2$-bridge link complement. This is because their commensurators determine their commensurablilty classes, and by a theorem of Margulis \cite{Mar}, the commensurator of a hyperbolic $3$-manifold is discrete if and only if it is non-arithmetic. 

It remains to check that non-arithmetic hyperbolic $2$-bridge link complements are pairwise incommensurable. Let $M = \mathbb{S}^{3} \setminus K(\Omega)$ and $M' = \mathbb{S}^{3} \setminus K(\Omega')$ be any two such manifolds. We use $T_{C}$ and $T_{C'}$ to denote the cusp cellulations of the minimal orbifolds in the commensurability classes of $M$ and $M'$ respectively. Recall that if $T_{C}$ is not equivalent to $T_{C'}$, then $M$ and $M'$ are not commensurable. By Corollary \ref{cor:cusp}, $T_{C}$ and $T_{C'}$ are equivalent only if $\Omega$ and $\Omega'$ differ by inversion or switching Ls and Rs. As noted in Section \ref{sec:background}, both of these possibilities result in $M$ and $M'$ being isometric. Thus, $M$ and $M'$ are commensurable only if they are isometric, as desired.
\end{proof}


\bibliographystyle{hamsplain}
\bibliography{biblio}

\end{document}